\documentclass{amsart}

\usepackage{mathrsfs}
\usepackage{amsthm}
\usepackage{amscd}
\usepackage{amssymb}
\usepackage{latexsym}
\usepackage{graphicx}
\usepackage{stmaryrd}
\usepackage{xypic}
\xyoption{curve}
\usepackage{nicefrac}
\usepackage{wasysym}




\newcommand{\ra}{\rightarrow}
\newcommand{\xra}{\xrightarrow}
\newcommand{\ira}{\hookrightarrow}

\newcommand{\id}{{\mathop{id}\;\!\!}}

\newcommand{\OP}[1]{#1^{\mathrm{op}}}

\newcommand{\BOX}{\mathbb{\square}}

\newcommand{\DEL}{\Delta}

\newcommand{\DIAGRAM}{\mathscr{G}}
\newcommand{\GENERIC}{\mathscr{C}}
\newcommand{\HOMOTOPY}{\mathop{h}}

\newcommand{\SEMILATTICES}{\mathscr{L}\;\!\!}
\newcommand{\PREORDEREDSETS}{\mathscr{Q}}
\newcommand{\PRESHEAVES}[1]{\hat{#1}}
\newcommand{\SETS}{\mathop{Set}\;\!\!}
\newcommand{\STREAMS}{\mathscr{S}}
\newcommand{\SPACES}{\mathscr{T}}


\newcommand{\I}{\mathbb{I}}

\newcommand{\graph}[1]{\mathop{graph}(#1)}
\newcommand{\join}{\vee}
\newcommand{\meet}{\wedge}

\newcommand{\Star}{\mathop{Star}\;\!\!}
\newcommand{\openstar}{\mathop{star}\;\!\!}

\newcommand{\cosimplicial}{\nabla}


\newcommand{\half}{\nicefrac{1}{2}}

\newcommand{\R}{\mathbb{R}}


\newtheorem{thm}{Theorem}[section]
\newtheorem{cor}[thm]{Corollary}
\newtheorem{lem}[thm]{Lemma}
\newtheorem{prop}[thm]{Proposition}
\newtheorem*{lem*}{Lemma}

\theoremstyle{definition}
\newtheorem{defn}[thm]{Definition}
\newtheorem{eg}[thm]{Example}



\newcommand{\boxobj}[1]{[1]^{\otimes #1}}
\newcommand{\xboxobj}[2]{[#1]^{\otimes #2}}

\newcommand{\XBOX}{\boxplus}

\newcommand{\supp}{\mathop{supp}\;\!\!}
\newcommand{\sd}{\mathop{sd}\;\!\!}
\newcommand{\cd}{\mathop{sd}\;\!\!}

\newcommand{\tri}{\mathop{tri}\;\!\!}
\newcommand{\qua}{\mathop{qua}\!}
\newcommand{\qu}{\mathop{q}\!}
\newcommand{\tr}{\mathop{t}\;\!\!}
\newcommand{\sn}{\mathop{ner}\!^\DEL}
\newcommand{\cn}{\mathop{ner}\!^\BOX}
\newcommand{\double}{\mathfrak{sd}\!}

\newcommand{\TRUNCATED}[2]{{#2}_{#1}}
\newcommand{\POSPACES}{\mathscr{P}}
\newcommand{\EQUISTREAMS}[1]{\STREAMS^{#1}}

\newcommand{\direalize}[1]{\upharpoonleft\!\!{#1}\!\!\downharpoonright}
\newcommand{\ordinomorphism}{\varphi}
\newcommand{\shomeo}{\varphi}
\newcommand{\chomeo}{\varphi}

\newcommand{\futurehomotopic}{\rightsquigarrow}
\newcommand{\homotopic}{\leftrightsquigarrow}

\newtheorem*{thm:main}{Theorem \ref{thm:main}}
\newtheorem*{cor:main}{Corollary \ref{cor:main}}
\newtheorem*{cor:excision}{Corollary \ref{cor:excision}}
\newtheorem*{thm:direalization.preserves.embeddings}{Theorem \ref{thm:direalization.preserves.embeddings}}
\newtheorem*{thm:direalization.preserves.finite.products}{Theorem \ref{thm:direalization.preserves.finite.products}}

\title{Cubical approximation for directed topology I.}
\author{Sanjeevi Krishnan}
\begin{document}
\begin{abstract}
Topological spaces - such as classifying spaces, configuration spaces and spacetimes - often admit directionality.
Qualitative invariants on such directed spaces often are more informative, yet more difficult, to calculate than classical homotopy invariants because directed spaces rarely decompose as homotopy colimits of simpler directed spaces.
Directed spaces often arise as geometric realizations of simplicial sets and cubical sets equipped with order-theoretic structure encoding the orientations of simplices and $1$-cubes.
We prove dual simplicial and cubical approximation theorems appropriate for the directed setting and give criteria for two different homotopy relations on directed maps in the literature to coincide.
\end{abstract}
\maketitle
\tableofcontents
\section{Background}
Spaces in nature often come equipped with directionality.
Topological examples of such spaces include spacetimes and classifying spaces of categories.  
Combinatorial examples of such spaces include higher categories, cubical sets, and simplicial sets.  
\textit{Directed geometric realizations} translate from the combinatorial to the topological.  
Those properties invariant under deformations respecting directionality can reveal some qualitative features of spacetimes, categories, and computational processes undetectable by classical homotopy types \cite{fgr:ditop, low:path.topology, pratt:models}.
Examples of such properties include the global orderability of spacetime events and the existence of non-determinism in the execution of concurrent programs \cite{fgr:ditop}.  
A directed analogue of singular (co)homology \cite{grandis:H}, constructed in terms of appropriate \textit{singular cubical sets} on \textit{directed spaces}, should systemize the analyses of seemingly disparate dynamical processes.   
However, the literature lacks general tools for computing such invariants.
In particular, homotopy extension properties, convenient for proving cellular and simplicial approximation theorems, almost never hold for directed maps [Figure \ref{fig:hep.failure}].  

\begin{figure}[h]
  \begin{tabular}{c}
    \includegraphics[width=30mm,height=30mm]{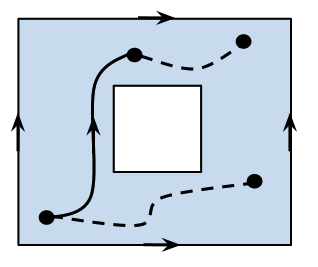} 
  \end{tabular}
  \caption{Failure of a homotopy through monotone maps to extend.  A \textit{directed path} on the illustrated square annulus $[0,3]\setminus[1,2]$ is a path monotone in both coordinates.  The illustrated dotted homotopy of maps from $\{0,1\}$ to $[0,3]^2\setminus[1,2]^2$ fails to extend to a homotopy \textit{through directed paths} from the illustrated solid directed path.}
  \label{fig:hep.failure}
\end{figure}

Current tools in the literature focus on decomposing the structure of \textit{directed paths} and undirected homotopies through such directed paths on a directed topological space.  
For general directed topological spaces, such tools include van Kampen Theorems for directed analogues of path-components \cite[Proposition 4]{fgrh:components2} and directed analogues of fundamental groupoids \cite[Theorem 3.6]{grandis:d}.
For directed geometric realizations of cubical sets, additional tools include a cellular approximation theorem for directed paths \cite[Theorem 4.1]{fajstrup:approx}, a cellular approximation theorem for undirected homotopies through directed paths \cite[Theorem 5.1]{fajstrup:approx}, and prod-simplicial approximations of undirected spaces of directed paths \cite[Theorem 3.5]{raussen:trace.spaces}.  
Extensions of such tools for higher dimensional analogues of directed paths are currently lacking in the literature.  

\section{Introduction}
Just as classical simplicial approximation \cite{curtis:approx} makes the calculation of singular (co)homology groups on compact triangulable spaces tractable, simplicial and cubical approximation theorems for streams should make the calculation of directed (co)homology theories \cite{grandis:H} equally tractable.  
The goal of this paper is to prove such approximation theorems.

We recall models, both topological and combinatorial, of directed spaces and constructions between them.
A category $\STREAMS$ of \textit{streams} \cite{krishnan:convenient}, spaces equipped with coherent preorderings of their open subsets, provides topological models. 
Some natural examples are spacetimes and connected and compact Hausdorff topological lattices.  
A category $\PRESHEAVES\BOX$ of cubical sets \cite{gl:sites} provides combinatorial models.  
The category $\PRESHEAVES\DEL$ of simplicial sets provides models intermediate in rigidity and hence serves as an ideal setting for comparing the formalisms of streams and cubical sets.    
\textit{Stream realization} functors $\direalize{-}$ [Definitions \ref{defn:simplicial.direalizations} and \ref{defn:cubical.direalizations}], \textit{triangulation} $\tri$ [Definition \ref{defn:tri}], \textit{edgewise} (\textit{ordinal}) subdivision $\sd$ [Figures \ref{fig:sd.cd.tri} and \ref{fig:sd}, \cite{ep:ordinal}, and Definition \ref{defn:sd}], and a cubical analogue $\cd$ [Definition \ref{defn:cd}] relate these three categories in the following commutative diagram [Figure \ref{fig:sd.cd.tri} and Propositions \ref{prop:sd.cd.tri} and \ref{prop:tri.direalize}]. 
\begin{equation*}
  \xymatrix{
    \PRESHEAVES\BOX\ar[dr]|{\direalize{\;-\;}}\ar@/_2ex/[dd]_{\cd}
  &
  & \PRESHEAVES\DEL\ar[dl]|{\direalize{\;-\;}}\ar@/^2ex/[dd]^{\sd}
  \\
  & \STREAMS
  \\
    \PRESHEAVES\BOX\ar[ur]|{\direalize{\;-\;}}\ar@/_2ex/_{\tri}[rr]
  &
  & \PRESHEAVES\DEL\ar[ul]|{\direalize{\;-\;}}
  }
\end{equation*}

We prove that the functors exhibit convenient properties, which we use in our proof of the main results.
Just as double barycentric simplicial subdivisions factor through polyhedral complexes \cite{curtis:approx}, quadruple cubical subdivisions locally factor, in a certain sense [Lemmas \ref{lem:cd.fold} and \ref{lem:cd.star.collapse}], through representable cubical sets.    
Triangulation and geometric realization both translate rigid models of spaces (cubical sets, simplicial sets) into more flexible models (simplicial sets, topological spaces.)  
However, the composite of triangulation with its right adjoint - unlike the composite of geometric realization with its right adjoint - is cocontinuous [Lemma \ref{lem:qt.cocontinuous}].  
Stream realization functors inherit convenient properties from their classical counterparts.

\begin{thm:direalization.preserves.finite.products}
  The functor $\direalize{-}\;:\hat\DEL\ra\STREAMS$ preserves finite products.
\end{thm:direalization.preserves.finite.products}

\begin{thm:direalization.preserves.embeddings}
  The functor $\direalize{-}\;:\PRESHEAVES\BOX\ra\STREAMS$ sends monics to stream embeddings.
\end{thm:direalization.preserves.embeddings}

\begin{figure}[t]
  \begin{tabular}{ccc}
    \includegraphics[width=20mm,height=20mm]{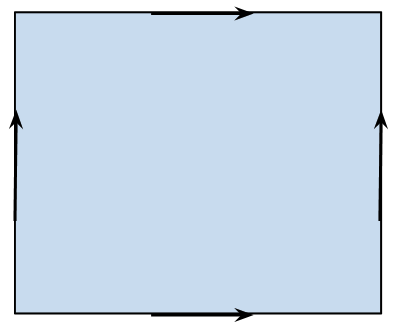} 
  &  
  & \includegraphics[width=20mm,height=20mm]{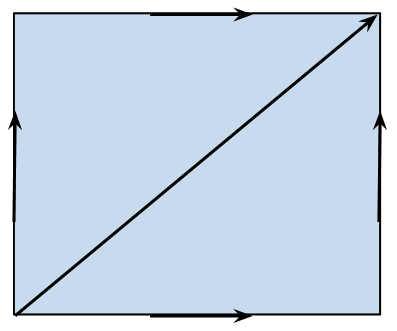}\\
    $\BOX[1]$
  &
  & $\tri\;\BOX[1]=(\DEL[1])^2$\\
  \vspace{.1cm}\\
    \includegraphics[width=20mm,height=20mm]{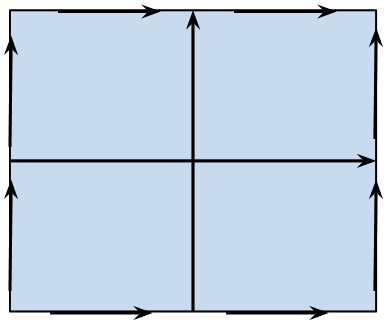} 
  &  
  & \includegraphics[width=20mm,height=20mm]{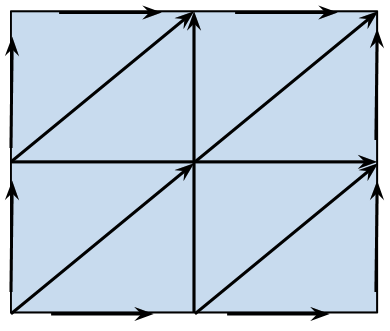}\\ 
    $\cd\BOX[1]$
  &
  & $\tri\;\cd\BOX[1]=\sd\;\tri\BOX[1]$\\
  \end{tabular}
  \caption{Edgewise subdivision, a cubical analogue, and triangulation}
  \label{fig:sd.cd.tri}
\end{figure}

We introduce homotopy theories for cubical sets and streams in \S\ref{sec:equivalence}.
Intuitively, a \textit{directed homotopy} of stream maps $X\ra Y$ should be a stream map $\I\ra Y^X$ from the unit interval $\I$ equipped with some local preordering to the mapping stream $Y^X$.  
The literature \cite{fgr:ditop, grandis:d} motivates two distinct homotopy relations, corresponding to different choices of stream-theoretic structure on $\I$, on stream maps [Figure \ref{fig:homotopies}].  
The weaker, and more intuitive, of the definitions relates stream maps $X\ra Y$ classically homotopic through stream maps.  
We adopt the stronger \cite{grandis:d} of the definitions.
However, we show that both homotopy relations coincide for the case $X$ compact and $Y$ quadrangulable [Theorem \ref{thm:d}], generalizing a result for $X$ a directed unit interval and $Y$ a directed realization of a \textit{precubical set} \cite[Theorem 5.6]{fajstrup:approx}.  

\begin{figure}[h]
  \label{fig:homotopies}
  \begin{tabular}{ccc}
    \includegraphics[width=20mm,height=20mm]{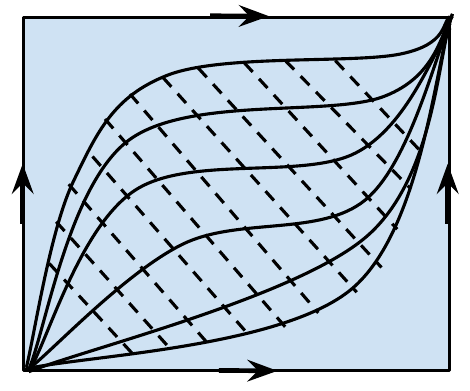} 
  &  
  & \includegraphics[width=20mm,height=20mm]{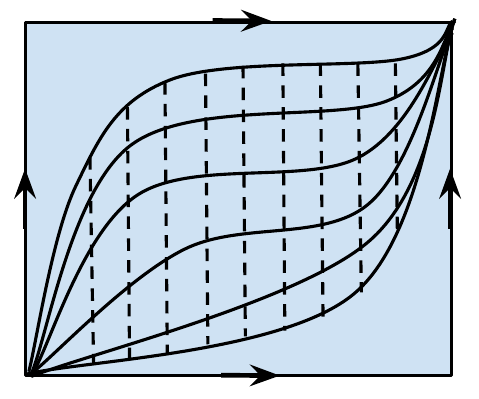}
  \\
  \end{tabular}
  \caption{Weak (left) and strong (right) types of directed homotopies. The homotopies on both sides are identical up to reparametrization of paths.  Only the right homotopy is monotone in the homotopy coordinate, traced by the dotted lines.  For compact quadrangulable streams, the equivalence relations generated by weak and strong definitions of directed homotopy are equivalent [Theorem \ref{thm:d}].}
  \label{fig:homotopy}
\end{figure}

We establish our main results in \S\ref{subsec:main}, stated in the more general setting of diagrams of streams and diagrams of cubical sets.  
We first prove a directed analogue [Theorem \ref{thm:sd.approx}] of classical simplicial approximation up to subdivision and a dual cubical approximation theorem [Corollary \ref{cor:cd.approx}], all for stream maps having compact domain.    
Our proofs, while analogous to classical arguments, require more delicacy because: simplicial sets and cubical sets do not admit approximations as oriented simplicial complexes and cubical complexes, even up to iterated subdivision; and cubical functions are more difficult to construct than simplicial functions.  
In a future sequel, we formalize a sense in which stream realization induces an equivalence between weak homotopy theories of cubical sets and streams.
\section{Conventions}

We fix some categorical notation and terminology.

\subsubsection{General}
We write $\SETS$ for the category of sets and functions.
We write $f_{\restriction X}$ for the restriction of a function $f:Y\ra Z$ to a subset $X\subset Y$.
We similarly write $F_{\restriction\mathscr{A}}:\mathscr{A}\ra\mathscr{C}$ for the restriction of a functor $F:\mathscr{B}\ra\mathscr{C}$ to a subcategory $\mathscr{A}\subset\mathscr{B}$.  
We write $S\cdot c$ for the $S$-indexed coproduct in a category $\GENERIC$ of distinct copies of a $\GENERIC$-object $c$.
Let $\DIAGRAM$ denote a small category.  
For each $\DIAGRAM$, cocomplete category $\GENERIC$, and functor $F:\OP\DIAGRAM\times\DIAGRAM\ra\GENERIC$, we write 
\begin{equation*}
\int_{\DIAGRAM}^{g}F(g,g)
\end{equation*}
for the coend \cite{maclane:categories} of $F$; see Appendix \S\ref{sec:coends} for details.    
For each Cartesian closed category $\GENERIC$, we write $-^c$ for the right adjoint to the functor $-\times c:\GENERIC\ra\GENERIC$, for each $\GENERIC$-object $c$.  

\subsubsection{Presheaves}
Fix a small category $\DIAGRAM$.  
We write $\PRESHEAVES\DIAGRAM$ for the category of presheaves $\OP\DIAGRAM\ra\SETS$ on $\DIAGRAM$ and natural transformations between them, call a $\PRESHEAVES\DIAGRAM$-object $B$ a \textit{subpresheaf} of a $\PRESHEAVES\DIAGRAM$-object $C$ if $B(g)\subset C(g)$ for all $\DIAGRAM$-objects $g$ and $B(\gamma)$ is a restriction and corestriction of $C(\gamma)$ for each $\DIAGRAM$-morphism $\gamma$, and write $\DIAGRAM[-]:\DIAGRAM\ra\PRESHEAVES\DIAGRAM$ for the Yoneda embedding naturally sending a $\DIAGRAM$-object $g$ to the representable presheaf
$$\DIAGRAM[g]=\DIAGRAM(-,g):\OP\DIAGRAM\ra\SETS.$$

For each $\PRESHEAVES\DIAGRAM$-morphism $\psi:C\ra D$ and subpresheaf $B$ of $C$, we write
$$\psi_{\restriction B}:B\ra D$$
for the component-wise restriction of $\psi$ to a $\PRESHEAVES\DIAGRAM$-morphism $B\ra D$.  
When a $\PRESHEAVES\DIAGRAM$-object $B$ and a $\DIAGRAM$-object $g$ are understood, for each $\sigma\in B(g)$ we write $\langle\sigma\rangle$ for the smallest subpresheaf $A\subset B$ satisfying $\sigma\in A(g)$ and $\sigma_*$ for the image of $\sigma$ under the following natural bijection defined by the Yoneda embedding:
\begin{equation*}
  B(g)\cong\PRESHEAVES\DIAGRAM(\DIAGRAM[g],B)
\end{equation*}

\subsubsection{Supports}
We will often talk about the support of a point in a geometric realization and the carrier of a simplex in a simplicial subdivision.
We provide a uniform language for such generalized supports.

Fix a category $\GENERIC$.  
For each $\GENERIC$-object $c$, a \textit{subobject} of $c$ is an equivalence class of monos of the form $s\ra c$, where two monos $s'\ira c$ and $s''\ira c$ are equivalent if each factors the other in $\GENERIC$.   
We often abuse notation and treat a subobject of a $\GENERIC$-object $c$ as a $\GENERIC$-object $s$ equipped with a distinguished mono representing the subobject; we write this mono as $s\ira c$.  
For each morphism $\gamma:a\ra c$ in a given complete category, we write $\gamma(a)$ for the \textit{image} of $c$ under $\gamma$, the minimal subobject $b\subset c$ through which $\gamma$ factors.
For an object $c$ in a given category, we write $b\subset c$ to indicate that $b$ is a subobject of $c$.  

\begin{defn}
  \label{defn:supports}
  Consider a well-powered category $\mathscr{B}$ closed under intersections of subobjects and a functor $F:\mathscr{B}\ra\mathscr{A}$ preserving monos.
  For each $\mathscr{B}$-object $b$ and subobject $a$ of the $\mathscr{A}$-object $Fb$, we write $\supp_F(a,b)$ for
  \begin{equation}
    \supp_F(a,b)=\bigcap\;\{b'\;|\;b'\subset b,\;a\subset Fb'\},
  \end{equation}
  the unique minimal subobject $b''\subset b$ such that $a\subset Fb''$.
\end{defn}

We wish to formalize the observation that supports of small objects are small.  
Fix a category $\GENERIC$.  
A $\GENERIC$-object $c$ is \textit{connected} if $c$ is not initial and the functor $\GENERIC(c,-)$ from $\GENERIC$ to the category of sets and functions preserves coproducts and \textit{projective} if $\GENERIC(c,\epsilon):\GENERIC(c,x)\ra\GENERIC(c,y)$ is surjective for each epi $\epsilon:x\ra y$.
We call a $\GENERIC$-object $c$ \textit{atomic} if $c$ is the codomain of an epi in $\GENERIC$ from a projective connected $\GENERIC$-object.
The atomic objects in the category $\SETS$ of sets and functions are the singletons, the atomic objects in the categories $\PRESHEAVES\DEL$ and $\PRESHEAVES\BOX$ of simplicial sets and cubical sets are those simplicial sets and cubical sets of the form $\langle\sigma\rangle$.
Our motivating examples for connected and projective objects are representable simplicial sets, representable cubical sets, and singleton spaces.

\begin{defn}
  \label{lem:atomicity}
  An object $a$ in a category $\GENERIC$ is \textit{atomic} if there exists an epi
  $$p\ra a$$
  in $\GENERIC$ with $p$ connected and projective.
\end{defn}

We make a general observation about supports with respect to a functor $F$.    
Our motivating examples of $F$ in the following lemma are subdivisions, a \textit{triangulation} operator converting cubical sets into simplicial sets, and functors taking simplicial sets and cubical sets to the underlying sets of their geometric realizations.

\begin{lem}
  \label{lem:atomic.supports}
  Consider a pair of small categories $\DIAGRAM_1$, $\DIAGRAM_2$ and functor 
  $$F:\PRESHEAVES\DIAGRAM_1\ra\PRESHEAVES\DIAGRAM_2$$
  preserving coproducts, epis, monos, and intersections of subobjects.
  For each $\PRESHEAVES\DIAGRAM_1$-object $b$ and atomic $\GENERIC_2$-subobject $a\subset Fb$, $\supp_F(a,b)$ is the image of a representable presheaf.
\end{lem}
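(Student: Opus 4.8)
The plan is to use atomicity of $a$ to transport a lift through the tautological presentation of $\supp_F(a,b)$ by representable presheaves, and then to use minimality of the support to collapse that presentation down to a single representable.

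First I would invoke the atomicity of $a$ (Definition~\ref{lem:atomicity}) to fix an epimorphism $p\twoheadrightarrow a$ in $\PRESHEAVES\DIAGRAM_2$ with $p$ connected and projective, and set $b''=\supp_F(a,b)$; this subobject is well defined by Definition~\ref{defn:supports} because $\PRESHEAVES\DIAGRAM_1$ is closed under intersections of subobjects and $F$ preserves monos and intersections, and it satisfies $a\subseteq Fb''$. Working with $b''$ from the outset, rather than with $b$, is precisely what makes the final minimality argument available. Every presheaf receives a canonical epimorphism from a coproduct of representables, $e\colon\coprod_{g,\,x\in b''(g)}\DIAGRAM_1[g]\twoheadrightarrow b''$, and since $F$ preserves coproducts and epis, $Fe\colon\coprod_{g,\,x}F\DIAGRAM_1[g]\twoheadrightarrow Fb''$ is again epic.

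Next I would compose $p\twoheadrightarrow a\hookrightarrow Fb''$; projectivity of $p$ lifts this along $Fe$ to a map $p\to\coprod_{g,\,x}F\DIAGRAM_1[g]$, and connectedness of $p$ forces that map to factor through a single summand $F\DIAGRAM_1[g_0]$, corresponding to some $x_0\in b''(g_0)$. Factoring $(x_0)_*\colon\DIAGRAM_1[g_0]\to b''$ as $\DIAGRAM_1[g_0]\twoheadrightarrow\langle x_0\rangle\hookrightarrow b''$ and applying $F$ (which preserves epis and monos) shows that the composite $p\to F\DIAGRAM_1[g_0]\to Fb''$, which is nothing but $p\twoheadrightarrow a\hookrightarrow Fb''$, factors through $F\langle x_0\rangle\hookrightarrow Fb''$; comparing images in $\PRESHEAVES\DIAGRAM_2$ then gives $a\subseteq F\langle x_0\rangle$ as subobjects of $Fb''$. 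Finally, since $\langle x_0\rangle\subseteq b''\subseteq b$ with $a\subseteq F\langle x_0\rangle$, minimality of $b''=\supp_F(a,b)$ yields $b''\subseteq\langle x_0\rangle$, whence $b''=\langle x_0\rangle$, which is exactly the image of the representable presheaf $\DIAGRAM_1[g_0]$ under $(x_0)_*$.

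I expect the only genuinely delicate point to be the image comparison in the third step: one must know that $\PRESHEAVES\DIAGRAM_2$ has (epi, mono)-factorizations well enough behaved that equal maps $p\to Fb''$ have equal images, so that the image $a$ of $p\twoheadrightarrow a\hookrightarrow Fb''$ is caught inside $F\langle x_0\rangle$. Beyond that, the argument is a matter of checking that the notions of connected and projective supplied by Appendix~\S\ref{sec:subobjects} are precisely the ``factors through one coproduct summand'' and ``lifts along epimorphisms'' properties invoked above, and that the stated preservation properties of $F$ suffice to push the canonical presentation of $b''$ through $F$ intact.
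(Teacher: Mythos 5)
Your proposal is correct and follows essentially the same route as the paper's proof: reduce to $b=\supp_F(a,b)$, push the canonical representable presentation of $b$ through $F$ (using preservation of coproducts and epis), lift $p\twoheadrightarrow a\hookrightarrow Fb$ along the resulting epi by projectivity, isolate a single summand by connectedness, and conclude by minimality of the support. The image-comparison step you flag is unproblematic since presheaf categories have well-behaved (epi, mono)-factorizations, and the paper glosses it in exactly the same way.
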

\begin{proof}
Let $\iota$ be a monic $a\ira Fb$.
We assume $b=\supp_F(a,b)$ without loss of generality.
Let $\epsilon$ be the natural epi $\coprod_{g}b(g)\cdot\DIAGRAM_1[g]\ra b$.
There exists an epi $\alpha:p\ra a$ with $p$ connected projective by $a$ atomic.
We can make the identification $F\coprod_{g}b(g)\cdot\DIAGRAM_1[g]=\coprod_gb(g)\cdot F\DIAGRAM_1[g]$ because $F$ preserves coproducts.  
Moreover, $F\epsilon$ is epi because $F$ preserves epis.  
Thus there exists a morphism $\hat\iota:p\ra\coprod_{g}b(g)\cdot F\DIAGRAM_1[g]$ such that $(F\epsilon)\hat\iota=\iota\alpha$ by $p$ projective and $F\epsilon$ epi.
There exists a $\DIAGRAM_1$-object $g$ and $\sigma\in b(g)$ such that $\hat\iota(p)\subset F(\sigma\cdot\DIAGRAM_1[g])$ by $p$ connected.
Therefore $a=\iota\alpha(p)\subset(F\epsilon)(\hat\iota p)\subset(F\epsilon)(F(\sigma\cdot\DIAGRAM_1[g]))=F(\epsilon(\sigma\cdot\DIAGRAM_1[g]))$.
Thus $b=\supp_F(a,b)\subset\epsilon(\sigma\cdot\DIAGRAM_1[g])$ by $\supp_F(a,b)$ minimal.  
Conversely, $\epsilon(\sigma\cdot\DIAGRAM_1[g])\subset b$ because $\epsilon$ has codomain $b$.  
Thus $b=\epsilon(\sigma\cdot\DIAGRAM_1[g])$.
\end{proof}

\subsubsection{Order theory}
We review some order-theoretic terminology in Appendix \S\ref{sec:preorders}.
For each preordered set $X$, we write $\leqslant_X$ for the preorder on $X$ and $\graph{\leqslant_X}$ for its \textit{graph}, the subset of $X\times X$ consisting of all pairs $(x,y)$ such that $x\leqslant_Xy$.  
We write $[n]$ for the set $\{0,1,\ldots,n\}$ equipped with its standard total order and $[-1]$ for the empty preordered set. 
For a(n order-theoretic) lattice $L$, we write $\join_L$ and $\meet_L$ for the join and meet operations $L^2\ra L$. 

\begin{eg}
  For all natural numbers $n$ and $i,j\in[n]$,
  $$i\meet_{[n]}j=\min(i,j),\quad i\join_{[n]}j=\max(i,j).$$
\end{eg}

\section{Streams}\label{sec:streams}
Various formalisms \cite{fgr:ditop, grandis:d, krishnan:convenient} model topological spaces equipped with some compatible temporal structure.  
A category $\STREAMS$ of \textit{streams}, spaces equipped with local preorders \cite{krishnan:convenient}, suffices for our purposes due to the following facts: the category $\STREAMS$ is Cartesian closed \cite[{Theorem 5.12}]{krishnan:convenient}, the forgetful functor from $\STREAMS$ to the category $\SPACES$ of compactly generated spaces creates limits and colimits \cite[Proposition 5.8]{krishnan:convenient}, and $\STREAMS$ naturally contains a category of connected compact Hausdorff topological lattices as a full subcategory \cite[Theorem 4.7]{krishnan:convenient}.

\begin{defn}
  \label{defn:streams}
  A \textit{circulation} $\leqslant$ on a space $X$ is a function assigning to each open subset $U$ of $X$ a preorder $\leqslant_U$ on $U$ such that for each collection $\mathcal{O}$ of open subsets of $X$, $\leqslant_{\bigcup\mathcal{O}}$ is the preorder on $\bigcup\mathcal{O}$ with smallest graph containing
\begin{equation}
  \label{eqn:cosheaf}
  \bigcup_{U\in\mathcal{O}}\graph{\leqslant_U}.
\end{equation}
The circulation $\leqslant$ on a weak Hausdorff space $X$ is a \textit{k-circulation} if for each open subset $U\subset X$ and pair $x\leqslant_Uy$, there exist compact Hausdorff subspace $K\subset U$ and circulation $\leqslant'$ on $K$ such that $x\leqslant'_{K}y$ and $\graph{\leqslant'_{K\cap V}}\subset\graph{\leqslant_V}$ for each open subset $V$ of $X$. 
\end{defn}

A circulation is the data of a certain type of ``cosheaf.''
A k-circulation is a cosheaf which is ``compactly generated.''

\begin{defn}
  \label{defn:topological.categories}
  A \textit{stream} $X$ is a weak Hausdorff k-space equipped with a k-circulation on it, which we often write as $\leqslant$.
  A \textit{stream map} is a continuous function 
  $$f:X\ra Y$$
  from a stream $X$ to a stream $Y$ satisfying $f(x)\leqslant_Uf(y)$ whenever $x\leqslant_{f^{-1}U}y$, for each open subset $U$ of $Y$.
  We write $\SPACES$ for the category of weak Hausdorff k-spaces and continuous functions, $\STREAMS$ for the category of streams and stream maps, and $\PREORDEREDSETS$ for the category of preordered sets and monotone functions.
\end{defn}

Just as functions to and from a space induce initial and final topologies, continuous functions to and from streams induce initial and final circulations \cite[Proposition 5.8]{krishnan:convenient} and \cite[Proposition 7.3.8]{borceux:2} in a suitable sense. 

\begin{prop}\label{prop:topological}
  The forgetful functor $\STREAMS\ra\SPACES$ is topological.
\end{prop}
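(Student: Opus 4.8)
The plan is to verify the standard criterion that a faithful functor is topological precisely when every structured source over its base category admits a unique initial lift. First I would note that the forgetful functor $U\colon\STREAMS\ra\SPACES$ is faithful, a stream map being by definition a continuous function with an extra property and hence determined by that underlying function; so it remains to produce initial lifts. I would fix a weak Hausdorff $k$-space $X$ together with a family of continuous functions $f_i\colon X\ra UY_i$, $i\in I$, with each $Y_i$ a stream, and exhibit the initial lift.

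The structural input is that the circulations on a fixed space $X$, partially ordered by declaring $\leqslant\;\preceq\;\leqslant'$ when $\graph{\leqslant_U}\subseteq\graph{\leqslant'_U}$ for every open $U\subseteq X$, form a small complete lattice: suprema are built open set by open set, generating a preorder from a union of graphs and then applying the idempotent closure operator implicit in the cosheaf condition \eqref{eqn:cosheaf}, and infima then follow formally. I would further use that the $k$-circulations form a sub-complete-lattice; checking this last point is the one place where weak Hausdorffness of $X$ and the compact generation of $\SPACES$ enter, and it is carried out in \cite{krishnan:convenient}. Granting it, together with the pullback-circulation construction of \cite{krishnan:convenient}, one checks that the $k$-circulations on $X$ making every $f_i$ a stream map form a nonempty (it contains the discrete circulation, for which the stream-map implication holds vacuously) down-closed subset with a largest element; I write $\leqslant$ for that element, so that each $f_i\colon(X,\leqslant)\ra Y_i$ is a stream map by construction.

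To see that $(X,\leqslant)$ with the family $f_i$ is $U$-initial it suffices, by faithfulness, to check that for every stream $Z$ and every continuous function $h\colon Z\ra X$ the map $h$ is a stream map $Z\ra(X,\leqslant)$ whenever each $f_ih$ is one. Given such an $h$, I would push the circulation of $Z$ forward along $h$ to the smallest circulation $\leqslant^{h}$ on $X$ making $h$ a stream map; its co-universal property says that a map from $(X,\leqslant^{h})$ to a stream is a stream map iff its precomposite with $h$ is, so each $f_i\colon(X,\leqslant^{h})\ra Y_i$ is a stream map, whence $\leqslant^{h}\preceq\;\leqslant$ by maximality of $\leqslant$, whence $h$ factors as $Z\xra{h}(X,\leqslant^{h})\xra{\id}(X,\leqslant)$ through stream maps and is itself one. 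The reverse implication is just closure of stream maps under composition, and uniqueness of the initial lift is then automatic; as a bonus, $U$ being topological it also creates limits and colimits, recovering a fact used elsewhere. I expect the genuine difficulty to lie not in this formal skeleton but in the structural facts underpinning it --- the lattice-completeness of $k$-circulations and the existence and universal properties of pullback and pushforward circulations compatibly with Definition \ref{defn:streams} --- which is exactly the machinery developed in \cite{krishnan:convenient} and to which one may simply appeal.
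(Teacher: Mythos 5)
Your proposal is correct and takes essentially the same route as the paper, which gives no written proof at all: it simply points to \cite[Proposition 5.8]{krishnan:convenient} for the existence and (co)universal properties of initial and final circulations and to \cite[Proposition 7.3.8]{borceux:2} for the lattice-of-structures criterion for topologicity, which is exactly the machinery you invoke and flesh out (faithfulness, the complete lattice of $k$-circulations on a fixed space, the largest circulation making the source into stream maps, and initiality verified via the pushforward circulation).
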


In particular, the forgetful functor $\STREAMS\ra\SPACES$ creates limits and colimits.  

\begin{prop}[{\cite[Lemma 5.5, Proposition 5.11]{krishnan:convenient}}]\label{prop:almost.topological}
  The forgetful functor
  $$\STREAMS\rightarrow\PREORDEREDSETS,$$
  sending each stream $X$ to its underlying set equipped with $\leqslant_X$, preserves colimits and finite products.
\end{prop}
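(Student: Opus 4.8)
The plan is to reduce both claims to an explicit description of the global preorder carried by a colimit, respectively a binary product, of streams. By Proposition~\ref{prop:topological} the forgetful functor $\STREAMS\ra\SPACES$ is topological and creates colimits and finite products, and $\PREORDEREDSETS\ra\SETS$ is topological; granting — as part of the content of the cited results — that the underlying set of a colimit, respectively finite product, of streams is the colimit, respectively product, of the underlying sets, the two assertions amount to the claim that the global preorder $\leqslant_X$ of $X=\colim_j X_j$, respectively $X=X_1\times X_2$, coincides with the preorder carried by the corresponding (co)limit taken in $\PREORDEREDSETS$. A general finite product then follows by induction from the binary case and the nullary case, the one-point stream being plainly terminal in $\PREORDEREDSETS$ as well.

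In the colimit case, write $q_j\colon X_j\ra X$ for the structure maps. The $\PREORDEREDSETS$-colimit carries the final preorder $\leqslant^{\mathrm{fin}}$, the transitive closure of $\bigcup_j(q_j\times q_j)\graph{\leqslant_{X_j}}$; since each $q_j$ is a stream map, hence monotone, $\leqslant^{\mathrm{fin}}\subseteq\graph{\leqslant_X}$. For the reverse inclusion I would introduce the circulation $\leqslant'$ on the space $X$ sending each open $U$ to the preorder on $U$ generated by $\bigcup_j(q_j\times q_j)\graph{\leqslant_{q_j^{-1}U}}$. The cosheaf axiom for $\leqslant'$ follows from those for the $\leqslant_{X_j}$ together with the fact that preimage along $q_j$ commutes with unions, so $\leqslant'$ is a circulation, and applying the cosheaf axioms at $U=X$ shows that its global preorder is exactly $\leqslant^{\mathrm{fin}}$. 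By construction each $q_j$ is a stream map into $(X,\leqslant')$; so once $\leqslant'$ is shown to be a $k$-circulation, the fact that the circulation of the stream $X$ is the smallest one making all $q_j$ stream maps forces $\graph{\leqslant_X}\subseteq\leqslant'_X=\leqslant^{\mathrm{fin}}$, completing this case.

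The binary product is treated analogously. The underlying space of $X=X_1\times X_2$ is the $k$-product and, as an initial lift over $\SPACES$, its circulation is the largest one making both projections $\pi_i$ stream maps. Here the candidate $\leqslant'$ sends each open $U\subseteq X$ to the preorder generated by those pairs $\bigl((a_1,a_2),(b_1,b_2)\bigr)$ admitting opens $V_i\subseteq X_i$ with $a_i,b_i\in V_i$, $V_1\times V_2\subseteq U$, and $a_i\leqslant_{V_i}b_i$ for $i=1,2$. Its cosheaf axiom again follows from those of the factors, since products of opens form a basis of the $k$-product; both projections are stream maps by construction; and evaluating at $U=X$ shows the global preorder of $\leqslant'$ to be the product preorder. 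Once $\leqslant'$ is known to be a $k$-circulation, maximality forces $\leqslant'$ to be contained in the circulation of $X$ at every open, so $(x_1,x_2)\leqslant_X(y_1,y_2)$ whenever $x_i\leqslant_{X_i}y_i$; the converse is immediate from monotonicity of the projections.

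The main obstacle, common to both cases, is verifying that the candidate circulations $\leqslant'$ are genuine $k$-circulations in the sense of Definition~\ref{defn:streams}, i.e.\ that every related pair admits a compact Hausdorff witness subspace carrying a compatible circulation. For a generating relation this reduces to the corresponding fact for a single $X_j$ (push a witness forward along $q_j$, using that the continuous image of a compact Hausdorff space in a weak Hausdorff space is again a compact Hausdorff subspace) or for the two factors (take the product $K_1\times K_2$ of witnesses equipped with the product circulation, which is compact Hausdorff); for a relation arising by transitive closure one must amalgamate the finitely many witnesses of its constituent generating relations into a single one, using that a finite union of compact Hausdorff subspaces of a weak Hausdorff $k$-space is again such a subspace. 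Checking that the circulation carried by the amalgamated witness restricts compatibly to open subsets, as demanded by Definition~\ref{defn:streams}, is the most delicate bookkeeping; the rest is either formal or inherited from the corresponding statements for the $X_j$ and the factors.
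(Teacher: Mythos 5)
This proposition is not proved in the paper at all; it is imported wholesale from \cite{krishnan:convenient}, so your proposal can only be judged against what a correct argument must contain. Your overall architecture is the right one: describe the final (resp.\ initial) circulation on a colimit (resp.\ binary product) explicitly, check it is a $k$-circulation compatible with the structure maps, and read off the global preorder; and the colimit half of your sketch is essentially sound. There the cosheaf axiom for your candidate $\leqslant'$ really is formal, because pushing a transitive closure forward along a function and then closing transitively agrees with closing the pushforward of the generators, and your witness-amalgamation scheme for the $k$-condition (images of compact Hausdorff witnesses, finite unions of them) goes through.

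The product half, however, contains a genuine gap, and it is not where you locate the difficulty. You assert that the cosheaf axiom for the candidate product circulation ``follows from those of the factors, since products of opens form a basis.'' It does not. Given a collection $\mathcal{O}$ of opens of $X_1\times X_2$, a generating pair of $\leqslant'_{\bigcup\mathcal{O}}$ is witnessed by a box $V_1\times V_2\subset\bigcup\mathcal{O}$ that need not lie inside any single $U\in\mathcal{O}$; the axiom demands that such a pair be a finite chain of pairs each witnessed by a box contained in a single member of $\mathcal{O}$. Refining the cover to boxes and applying the cosheaf axiom in each factor separately produces two chains, one in $X_1$ and one in $X_2$, but the boxes of the refinement are correlated between the two coordinates, so the two chains cannot in general be interleaved into a chain of product relations each living in one box of the refinement. (Think of $\vec\I\times\vec\I$ covered by $\I^2\setminus\{(1,1)\}$ and $\I^2\setminus\{(0,0)\}$: chaining $(0,0)\leqslant(1,1)$ requires producing intermediate points such as $(\half,\half)$, which is not a formal consequence of the cosheaf axioms of the factors.) This staircase construction is the actual crux of the product case, and it is exactly where the $k$-condition earns its keep: one needs the compact Hausdorff witnesses $K_i\subset V_i$ and their circulations, a Lebesgue-type covering argument on $K_1\times K_2$, and the order-theoretic structure of compact witnesses to manufacture the intermediate points. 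By contrast, the witness-amalgamation bookkeeping you flag as the main obstacle is comparatively routine. Separately, you are right to flag as an assumption that underlying sets of colimits of streams are colimits of underlying sets; since colimits in $\SPACES$ involve a weak Hausdorff quotient, that point is also not free, though it is reasonable to charge it to the cited reference.
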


\begin{thm}[{\cite[Theorem 5.13]{krishnan:convenient}}]
  \label{thm:x.closed}
  The category $\STREAMS$ is Cartesian closed.
\end{thm}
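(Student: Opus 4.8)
The plan is to exhibit, for each stream $X$, an explicit right adjoint $(-)^X$ to the product functor $-\times X:\STREAMS\ra\STREAMS$. Finite products in $\STREAMS$ already exist, since the forgetful functor $U:\STREAMS\ra\SPACES$ is topological [Proposition \ref{prop:topological}] and hence creates limits; so the whole content is the construction of internal homs together with the exponential adjunction. One could instead try to argue abstractly — $U$ creates colimits and $U(X\times Z)=UX\times UZ$, while $\SPACES$ is Cartesian closed, so $-\times X$ preserves colimits — but extracting a right adjoint would still require a solution-set condition that seems no cheaper than the direct construction.

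First I would fix the underlying space and the circulation of the candidate mapping stream $Y^X$. Its underlying set must be $\STREAMS(X,Y)\cong\STREAMS(\ast\times X,Y)$, the set of stream maps $X\ra Y$, and I would topologize it as a subspace of the exponential $(UY)^{(UX)}$ in the Cartesian closed category $\SPACES$. For the circulation I would take $\leqslant$ to be the largest circulation on this space for which the evaluation $\mathrm{ev}:Y^X\times X\ra Y$ is a stream map, and then replace it by the largest k-circulation beneath it. Such a largest circulation exists because the set of circulations making $\mathrm{ev}$ a stream map is downward closed (the product circulation is monotone in its inputs) and closed under the joins that generate circulations (a join of order-preserving relations stays order-preserving after composing with $\mathrm{ev}$, since $\leqslant_Y$ is a cosheaf); passage to the largest k-circulation beneath it is legitimate because topologicity of $U$ guarantees it exists and k-ification only deletes pairs, hence keeps $\mathrm{ev}$ a stream map.

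Next I would verify the natural bijection $\STREAMS(Z,Y^X)\cong\STREAMS(Z\times X,Y)$. The $\SPACES$-level adjunction already provides a bijection between continuous maps $Z\ra Y^X$ and continuous maps $Z\times X\ra Y$, so the point is to match stream maps on the two sides. If $f:Z\ra Y^X$ is a stream map, its adjunct is $\mathrm{ev}\circ(f\times\id_X)$, a composite of stream maps ($f\times\id_X$ because the categorical product in $\STREAMS$ is functorial, $\mathrm{ev}$ by construction), hence a stream map. Conversely, given a stream map $g:Z\times X\ra Y$ with continuous adjunct $\hat g:Z\ra Y^X$, I must show $\hat g$ is monotone: whenever $z\leqslant_{\hat g^{-1}\mathcal U}z'$ for an open $\mathcal U\subseteq Y^X$, one needs $\hat g(z)\leqslant_{\mathcal U}\hat g(z')$. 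By maximality of the circulation on $Y^X$ this amounts to showing that enlarging $\leqslant$ to include all such forced pairs, and closing up, leaves $\mathrm{ev}$ a stream map; unwinding the cosheaf bookkeeping on $Y^X\times X$ and the correspondence of opens under the $\SPACES$-exponential, this comes down exactly to the hypothesis that $g$ is a stream map. Naturality in $Z$ and the triangle identities then transfer from the $\SPACES$-level adjunction.

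The main obstacle is this last verification — that the continuous adjunct of a stream map $Z\times X\ra Y$ is again a stream map, i.e.\ that the maximal circulation on the mapping space is still large enough. This is where both the cosheaf definition of a circulation and the compact-generation clause of a k-circulation must be handled with care: one reduces the monotonicity condition on $Y^X$ to compact Hausdorff test streams sitting inside $Z$, and threads open sets of $Y^X$ through the exponential adjunction to open sets of $Y$. A secondary subtlety is confirming that the structure produced is genuinely a k-circulation rather than merely a circulation; here one leans once more on topologicity of $U:\STREAMS\ra\SPACES$ and checks that k-ification does not disturb the adjunction.
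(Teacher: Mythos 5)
The paper does not prove this statement: Theorem \ref{thm:x.closed} is quoted verbatim from \cite[Theorem 5.13]{krishnan:convenient}, so there is no internal proof to compare yours against. Your architecture --- underlying set the stream maps $X\ra Y$ topologized inside the $\SPACES$-exponential, circulation the largest one for which evaluation is a stream map (then k-ified), adjunction transferred from the $\SPACES$-level exponential law --- is the standard route and is essentially that of the cited reference, so the plan itself is sound.

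As written, however, the proposal has two genuine gaps, both at the points you yourself flag as delicate. First, the existence of a largest circulation making $\mathrm{ev}:Y^X\times X\ra Y$ a stream map does not follow from the one-line lattice argument you give. The circulation on $Y^X\times X$ is the initial lift of the two projections (the forgetful functor being topological), and you never address whether forming this initial lift commutes with joins of circulations on the factor $Y^X$; monotonicity of the product circulation in its inputs gives downward closure of the admissible circulations, but not closure under the joins that the cosheaf axiom forces you to take, so the supremum of the admissible circulations may fail to be admissible. Second, and more seriously, the verification that the continuous adjunct $\hat g$ of a stream map $g:Z\times X\ra Y$ is monotone is only described, not carried out. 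Adjoining the pairs $(\hat g(z),\hat g(z'))$ to $\leqslant_{\mathcal{U}}$ and ``closing up'' generates, through transitivity and the cosheaf condition, new relations among elements of $Y^X$ that have nothing to do with $g$; showing that evaluation remains a stream map after this closure is exactly the content of the theorem and cannot be reduced to ``the hypothesis that $g$ is a stream map'' for the single given $g$. This is where the cited proof does its real work --- reducing the monotonicity check to compact Hausdorff test streams via the k-circulation clause --- and your sketch stops just short of it.
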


An equalizer in $\STREAMS$ of a pair $X\rightrightarrows Y$ of stream maps is a stream map $e:E\ra X$ such that $e$ defines an equalizer in $\SPACES$ and $e$ is a \textit{stream embedding}.

\begin{defn}
  A \textit{stream embedding} $e$ is a stream map $Y\ra Z$ such that for all stream maps $f:X\ra Z$ satisfying $f(X)\subset Y$, there exists a unique dotted stream map making the following diagram commute.
  \begin{equation*}
    \xymatrix{
      X\ar[r]^f\ar@{.>}[d]
    & Z
    \\
      Y\ar[ur]_{e}
    }
  \end{equation*}
\end{defn}

Stream embeddings define topological embeddings of underlying spaces [Proposition \ref{prop:topological}].  
Inclusions, from open subspaces equipped with suitable restrictions of circulations, are embeddings.  
However, general stream embeddings are difficult to explicitly characterize.  
We list some convenient criteria for a map to be an embedding. 
The following criterion follows from the definition of $k$-circulations.  

\begin{lem}
  \label{lem:k-embeddings}
  For a stream map $f:X\ra Y$, the following are equivalent.
  \begin{enumerate}
    \item The map $f$ is a stream embedding.
    \item For each stream embedding $k:K\ra X$ from a compact Hausdorff stream $K$, $fk$ is a stream embedding.
  \end{enumerate}
\end{lem}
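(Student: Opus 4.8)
The implication $(1)\Rightarrow(2)$ is formal. A stream embedding is injective: applying the defining universal property to stream maps out of a one-point stream, two points of $Y$ with the same image under an embedding $e\colon Y\to Z$ would give two distinct liftings of the same map $\ast\to Z$. Hence stream embeddings compose: given stream embeddings $a\colon A\to B$ and $b\colon B\to C$ and a stream map $g\colon Z\to C$ with $g(Z)\subset b(a(A))$, first lift $g$ through $b$ to a stream map $g_1\colon Z\to B$, observe $g_1(Z)\subset a(A)$ since $b$ is injective, then lift $g_1$ through $a$; uniqueness of the composite lifting follows from uniqueness for $b$ and then for $a$. Since $fk$ is the composite of the stream embeddings $k$ and $f$, it is a stream embedding.

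For $(2)\Rightarrow(1)$ the plan is to exploit that the forgetful functor $\STREAMS\to\SPACES$ is topological [Proposition \ref{prop:topological}]: a stream map is a stream embedding exactly when its underlying continuous function is a topological embedding and the circulation on its domain is the coarsest one making it a stream map. So, assuming $(2)$, it suffices to check (a) that $f$ is a topological embedding and (b) that whenever $x\leqslant_U y$ in $X$, this relation is forced through $f$ by the circulation of $Y$. I would reduce both to the compact Hausdorff case and then feed the hypothesis in: by Definition \ref{defn:streams} every relation $x\leqslant_U y$ of the $k$-circulation on $X$ is witnessed by a compact Hausdorff subspace $K\subset U$ carrying a circulation for which $K\hookrightarrow X$ is a stream map, and — after arranging that this inclusion is in fact a stream embedding — hypothesis $(2)$ gives that $f_{\restriction K}\colon K\to Y$ is a stream embedding. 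For (a): a weak Hausdorff $k$-space is the colimit in $\SPACES$ of its compact Hausdorff subspaces, so injectivity of $f$ follows because any two points of $X$ lie in a common such $K$ with $f_{\restriction K}$ injective, and the fact that $f$ reflects open sets is inherited from the topological embeddings $f_{\restriction K}$. For (b): the inequality $x\leqslant_U y$ lives in a substream $K$, and pushing the defining inequalities of the circulation on $K$ forward along the stream embedding $f_{\restriction K}$ shows $f(x)$ and $f(y)$ are comparable in $Y$ in the way required, so the circulation of $X$ is no finer than the initial one induced by $f$.

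The main obstacle is exactly the step I glossed as ``arrange that $K\hookrightarrow X$ is a stream embedding'': one must isolate the correct compact-generation statement for streams, namely that $X$ is a colimit \emph{in $\STREAMS$} of compact Hausdorff substreams with their inclusion embeddings, rather than merely a colimit of compact Hausdorff subspaces in $\SPACES$. This is where Definition \ref{defn:streams} has to be used with care — for instance, verifying that restrictions of $k$-circulations to compact Hausdorff subspaces are again $k$-circulations and that the witnessing subspaces supplied by the $k$-circulation axiom can be taken to be substreams with embedding inclusions. Once that structural fact is available, $(2)$ together with the standard behaviour of topological embeddings into $k$-spaces closes both (a) and (b), and Lemma \ref{lem:k-embeddings} follows.
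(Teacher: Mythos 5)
Your argument for $(1)\Rightarrow(2)$ is complete and correct: injectivity of stream embeddings via maps from the one-point stream, and closure of stream embeddings under composition, give exactly what is needed. For context, the paper supplies no proof of this lemma at all (it is merely asserted to ``follow from the definition of $k$-circulations''), so there is no written argument to compare against; your plan for the converse --- reducing to the compact Hausdorff witnesses supplied by the $k$-circulation axiom --- is certainly in the spirit of that remark.

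The direction $(2)\Rightarrow(1)$, however, is only a sketch, and two of its steps fail as written. First, step (a) is false: a continuous injection in $\SPACES$ that restricts to a topological embedding on every compact Hausdorff subspace of its domain need not be an embedding. Let $f$ be the identity function from $[0,1]$ with the discrete topology to $[0,1]$ with its usual topology; both are weak Hausdorff k-spaces, the only compact subspaces of the domain are finite, $f$ restricts to an embedding on each of them (and, giving everything the discrete circulation, hypothesis (2) of the lemma holds verbatim), yet the identity map of $[0,1]$ does not lift continuously through $f$. The problem is that the compact subsets of $Y$ arising in the lifting problem need not have compact preimages in $X$, so nothing is ``inherited from the $f_{\restriction K}$''; this also shows the lemma is genuinely delicate at the level of underlying spaces and is only coherent in the situations where the paper applies it, namely where the underlying map is already known to be a topological embedding and the real content is the circulation statement. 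Second, step (b) proves the wrong containment: deriving $f(x)\leqslant_U f(y)$ in $Y$ from $x\leqslant_{f^{-1}U}y$ in $X$ is nothing more than the hypothesis that $f$ is a stream map. The universal property requires the reverse reflection: for a stream map $g:Z\ra Y$ with $g(Z)\subset f(X)$ one must show the corestriction $Z\ra X$ is again a stream map, i.e.\ that relations of $Z$ --- witnessed on compact Hausdorff substreams by the $k$-circulation axiom, pushed into $Y$, and landing in the image of a compact Hausdorff piece of $X$ --- already hold in the circulation of $X$. That is the only place hypothesis (2) can enter, and your sketch never deploys it. Combined with the compact-generation statement you yourself flag as unproved, this leaves the essential content of $(2)\Rightarrow(1)$ unestablished.
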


The following criterion, straightforward to verify, is analogous to the statement that a sheaf $\mathcal{F}$ on a space $X$ is the pullback of a sheaf $\mathcal{G}$ on a space $Y$ along an inclusion $i:X\ira Y$ if for each open subset $U$ of $X$, $\mathcal{F}_U$ is the colimit, taken over all open subsets $V$ of $Y$ containing $U$, of objects of the form $\mathcal{G}_V$.

\begin{lem}
  \label{lem:restrictions.define.subcirculations}
  A stream map $f:X\ra Y$ is a stream embedding if
  $$\graph{\leqslant^X_U}=U^2\cap\bigcap_{V\in\mathscr{B}_U}\graph{\leqslant^{Y}_V},$$
  for each open subset $U$ of $X$, where $\mathscr{B}_U$ denotes a choice of open neighborhood basis in $Y$ of $U$, $\leqslant^X$ and $\leqslant^Y$ are the respective circulations on $X$ and $Y$, and $f$ defines an inclusion of a subspace.  
\end{lem}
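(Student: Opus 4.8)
The plan is to verify Lemma~\ref{lem:restrictions.define.subcirculations} by unwinding the definition of stream embedding via the universal property, and reducing the claim to a pointwise statement about circulations. First I would observe that, since $f$ is assumed to be an inclusion of a subspace, it is already a topological embedding; so the content is purely order-theoretic, namely that the given formula for $\graph{\leqslant^X_U}$ forces the required lifting property. Concretely, given a stream map $g\colon Z\ra Y$ with $g(Z)\subset X$, the underlying continuous function factors uniquely as $Z\ra X$ through the subspace inclusion, and the only thing to check is that this factorization $\bar g\colon Z\ra X$ is a stream map, i.e.\ that $z\leqslant^Z_{\bar g^{-1}U} z'$ implies $\bar g(z)\leqslant^X_U \bar g(z')$ for every open $U\subset X$.

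The key step is the following: fix such $U$, $z$, $z'$, and set $x=\bar g(z)$, $x'=\bar g(z')$. Using the hypothesis, it suffices to show $(x,x')\in\graph{\leqslant^Y_V}$ for every $V\in\mathscr{B}_U$ (the fact that $(x,x')\in U^2$ is automatic). For each such $V$, the set $\bar g^{-1}U\subset \bar g^{-1}V = g^{-1}V$ is open in $Z$, so $z\leqslant^Z_{\bar g^{-1}U} z'$ implies $z\leqslant^Z_{g^{-1}V} z'$ by the basic compatibility of a circulation with inclusions of open subsets (every circulation restricts: enlarging the open set can only enlarge the graph, which follows from the cosheaf condition \eqref{eqn:cosheaf} applied to $\mathcal{O}=\{$smaller set, larger set$\}$, or rather from monotonicity of $\leqslant_{-}$ under open inclusions, a consequence of Definition~\ref{defn:streams}). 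Since $g$ is a stream map, $z\leqslant^Z_{g^{-1}V} z'$ gives $g(z)\leqslant^Y_V g(z')$, i.e.\ $(x,x')\in\graph{\leqslant^Y_V}$. Intersecting over all $V\in\mathscr{B}_U$ and using the displayed formula yields $(x,x')\in\graph{\leqslant^X_U}$, as desired. This shows $\bar g$ is a stream map, and uniqueness is inherited from uniqueness of the topological factorization, so $f$ satisfies the defining universal property of a stream embedding.

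I should also double-check the degenerate aspects: that $\bar g^{-1}U$ is genuinely the preimage under $\bar g$ rather than under $f$ (they agree since $f\bar g=g$ as functions and $f$ is injective), and that $\mathscr{B}_U$ being a neighborhood basis — not necessarily consisting of subsets of $X$ — causes no trouble, since the formula only ever references $\graph{\leqslant^Y_V}$ intersected back with $U^2$. The main obstacle, such as it is, is bookkeeping: being careful that the monotonicity of circulations under open inclusions is exactly what Definition~\ref{defn:streams} provides, and that one never needs the $k$-circulation refinement — the plain cosheaf-type axiom suffices. Everything else is a routine diagram chase, so no serious difficulty is anticipated.
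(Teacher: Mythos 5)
Your proof is correct and is precisely the routine verification the paper omits (it declares the lemma ``straightforward to verify'' and gives no argument): factor the underlying continuous map through the subspace inclusion, then use monotonicity of circulations under open inclusions together with the displayed intersection formula to check the factorization is a stream map. Your attention to the side points — that only the cosheaf axiom and not the $k$-circulation condition is needed, and that uniqueness follows from injectivity of $f$ — is exactly right.
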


A \textit{topological lattice} is a(n order theoretic) lattice $L$ topologized so that its join and meet operations $\join_L,\meet_L:L^2\ra L$ are jointly continuous.

\begin{defn}
  We write $\vec{\I}$ for the unit interval
  $$\I=[0,1],$$
  regarded as a topological lattice whose join and meet operations are respectively defined by maxima and minima.  
\end{defn}

We can regard a category of connected and compact Hausdorff topological lattices as a full subcategory of $\STREAMS$ \cite[Propositions 4.7, 5.4, 5.11]{krishnan:convenient}, \cite[Proposition 1, Proposition 2, and Theorem 5]{nachbin:order}, \cite[Proposition VI-5.12 (i)]{ghklms:lattices}, \cite[Proposition VI-5.15]{ghklms:lattices}.

\begin{thm}\label{thm:embed}
  There exists a unique dotted embedding making the diagram
  \begin{equation*}\label{eqn:embed}
    \xymatrix{
      \POSPACES\ar[r]\ar[d]\ar@{.>}[dr]
    & \PREORDEREDSETS
    \\
      \SPACES
    & \STREAMS,\ar[l]\ar[u]
    }
  \end{equation*}
  where $\POSPACES$ is the category of compact Hausdorff connected topological lattices and monotone maps between them and the solid arrows are forgetful functors, commute.
  Moreover, the dotted embedding is full.
\end{thm}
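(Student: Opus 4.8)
The statement asserts that the category $\POSPACES$ of compact Hausdorff connected topological lattices, with monotone maps, embeds fully into $\STREAMS$ in a way compatible with the forgetful functors to $\SPACES$ and $\PREORDEREDSETS$. The plan is to produce the dotted functor by equipping each lattice $L\in\POSPACES$ with a canonical circulation induced by its partial order, verify it is a $k$-circulation, check that this assignment is functorial and faithful, and finally prove fullness — i.e. every stream map between two such lattice-streams is automatically monotone for the lattice orders.

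First I would define the circulation: for $L$ a compact Hausdorff topological lattice and $U\subseteq L$ open, set $x\leqslant_U y$ iff there is a path (or, more robustly, a finite chain of points) inside $U$ from $x$ to $y$ that is monotone for $\leqslant_L$; equivalently, take the circulation on $L$ freely generated (in the sense of Definition \ref{defn:streams}, the smallest-graph-cosheaf condition) by declaring $x\leqslant_L y$ globally. The cited results of Nachbin \cite{nachbin:order} on order-convexity and normally ordered spaces, together with \cite[Proposition VI-5.12(i), VI-5.15]{ghklms:lattices}, guarantee that a compact Hausdorff topological lattice is a compact \emph{pospace} whose order is closed; connectedness is what lets local order information reconstruct the global order. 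I would check the cosheaf/circulation axiom directly from this definition, and check the $k$-circulation condition by using compactness of $L$ itself: for $x\leqslant_U y$ the required compact Hausdorff subspace $K$ can be taken inside $U$ using local compactness and the closed-order structure, with $\leqslant'$ the restricted order. Here I would lean on \cite[Propositions 4.7, 5.4, 5.11]{krishnan:convenient}, which is presumably exactly the statement that compact Hausdorff pospaces with closed order embed into $\STREAMS$; the lattice hypothesis then lands us in the full subcategory $\POSPACES$.

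Functoriality and faithfulness are then routine: a monotone continuous map $f:L\ra L'$ of lattices sends $\leqslant_L$-monotone chains to $\leqslant_{L'}$-monotone chains, hence is a stream map for the induced circulations, and since the underlying-space and underlying-preorder functors are faithful, so is the composite assignment. Commutativity of the triangle with the forgetful functors holds on the nose by construction, since the induced preorder of the stream $L$ on its whole underlying set is $\leqslant_L$ (this uses that $L$ is connected, so the globally-generated circulation recovers $\leqslant_L$ rather than something coarser) — this is the content of Proposition \ref{prop:almost.topological} applied pointwise together with the Nachbin-style separation.

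The main obstacle is \textbf{fullness}: given lattices $L,L'\in\POSPACES$ and a stream map $g:L\ra L'$ (continuous, and order-preserving for the \emph{local} circulations), I must show $g$ is monotone for $\leqslant_L,\leqslant_{L'}$. The key point is that $x\leqslant_L y$ is detected \emph{locally} once $L$ is connected: by Nachbin's theorem $L$, being compact Hausdorff with closed order, is a normally ordered space, so the global order is the intersection of the orders $\leqslant_U$ over a neighborhood basis — more precisely, for the stream structure we built, $x\leqslant_L y$ forces $x\leqslant_{L}y$ in the circulation, and a stream map must respect this by the very definition of stream map (Definition \ref{defn:topological.categories}) applied to the open set $U=L'$. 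Thus $g(x)\leqslant_{L'}g(y)$, which is exactly monotonicity. I would be careful to phrase this so that no appeal to paths is needed — only the cosheaf-generation property of circulations and the global comparability $x\leqslant_L y \Rightarrow x\leqslant_L y$ in $\leqslant_{L}$ for the stream $L$ — so that the argument does not secretly assume $L$ is path-connected. With fullness in hand, the dotted embedding exists and the diagram commutes.
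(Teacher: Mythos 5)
The paper itself offers no proof of this theorem: it is justified entirely by the citations immediately preceding it (Propositions 4.7, 5.4, 5.11 of \cite{krishnan:convenient}, Nachbin \cite{nachbin:order}, and \cite[VI-5.12, VI-5.15]{ghklms:lattices}), so you are attempting more than the text does. Your overall architecture is sound, and your reduction of fullness to the single claim that the constructed circulation, evaluated on all of $L$, returns exactly the lattice order $\leqslant_L$ is the correct reduction: once that is known, a stream map preserves global preorders by Definition \ref{defn:topological.categories} with $U=L'$, hence is monotone, and faithfulness and commutativity are immediate. But two steps as written would fail.

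First, your three candidate definitions of the circulation are not equivalent, and two of them are not circulations at all. ``A finite chain of points inside $U$ monotone for $\leqslant_L$'' is, by transitivity of $\leqslant_L$, just the restriction $\graph{\leqslant_L}\cap U^2$; for $L=\vec{\I}$ and $U=[0,\third)\cup(\nicefrac{2}{3},1]$ covered by its two components, the restriction relates $0$ and $1$ while the preorder generated by the restrictions to the components does not, so Definition \ref{defn:streams} fails. ``Freely generated by declaring $x\leqslant_L y$ globally'' is not usable either: data supported only on the whole space generates nothing on proper opens, and the cosheaf axiom then fails at $L$ itself. What works (and is what \cite{krishnan:convenient} supplies) is the smallest circulation whose global preorder contains $\leqslant_L$ --- concretely, $x\leqslant_U y$ iff every open cover of $U$ admits a subordinate finite monotone chain from $x$ to $y$; your path-based variant is also viable. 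Second, the crux --- that this circulation's value on all of $L$ is $\leqslant_L$ and not something strictly coarser --- is exactly where connectedness and the lattice structure do real work, via the existence of order arcs between comparable points in a compact connected topological lattice (the content of the \cite{ghklms:lattices} citations). Your stated justifications do not reach it: Proposition \ref{prop:almost.topological} concerns colimit preservation and is irrelevant here, and ``the global order is the intersection of the orders $\leqslant_U$'' points the wrong way, since circulations are governed by generation under unions, not intersection. Your parenthetical guess that compact Hausdorff pospaces with closed order embed into $\STREAMS$ compatibly with the forgetful functor to $\PREORDEREDSETS$ is false --- the two-point discrete pospace admits no circulation with nontrivial global preorder, since the cover by singletons generates only equality --- so the connectedness and lattice hypotheses cannot be deferred to a presumed citation; they are the substance of the theorem.
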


We henceforth regard connected compact Hausdorff topological lattices, such as $\vec{\I}$, as streams.

\section{Simplicial models}\label{sec:simplicial}
Simplicial sets serve as technically convenient models of directed spaces.  
Firstly, edgewise (ordinal) subdivision, the subdivision appropriate for preserving the directionality encoded in simplicial orientations, is simple to define [Definition \ref{defn:sd}] and hence straightforward to study [Lemma \ref{lem:sd.fold}].  
Secondly, the graphs of natural preorders $\leqslant_{\direalize{\;B\;}}$ on geometric realizations $|B|$ of simplicial sets $B$ admit concise descriptions in terms of the structure of $B$ itself [Lemma \ref{lem:simplicial.preorders}].  

\subsection{Simplicial sets}\label{subsec:simplicial.sets}
We write $\DEL$ for the category of finite non-empty ordinals
\begin{equation*}
[n]=\{0\leqslant_{[n]}1\leqslant_{[n]}\cdots\leqslant_{[n]}n\},\;\; n=0,1,\ldots
\end{equation*}
and monotone functions between them.
\textit{Simplicial sets} are $\hat\DEL$-objects and \textit{simplicial functions} are $\hat\DEL$-morphisms.
We refer the reader to \cite{may:simplicial} for the theory of simplicial sets. 
The representable simplicial sets $\DEL[n]=\DEL(-,[n])$ model combinatorial simplices.  
The \textit{vertices} of a simplicial set $B$ are the elements of $B([0])$.  
For each simplicial set $B$, we write $\dim B$ for the infimum of all natural numbers $n$ such that the natural simplicial function $B[n]\cdot[n]\ra B$ is epi.
For each atomic simplicial set $A$, there exists a unique $\sigma\in A[\dim A]$ such that $A=\langle\sigma\rangle$.  
Atomic simplicial sets are those simplicial sets of the form $\langle\sigma\rangle$.
Every atomic simplicial set has a ``minimum vertex,'' defined as follows.  

\begin{defn}
  \label{defn:minimum.of.atomic.simplicial.set}
  For each atomic simplicial set $A$, we write $\min A$ for the vertex
  $$\min A=\sigma_{[0]}(0),$$
  where $\sigma\in A[\dim A]$ satisfies $A=\langle\sigma\rangle$.
\end{defn}

Preordered sets naturally define simplicial sets via the following nerve functor.

\begin{defn}
  We write $\sn$ for the functor
  $$\sn:\PREORDEREDSETS\ra\PRESHEAVES\DEL$$
  naturally sending each preordered set $P$ to the simplicial set $\sn P = \PREORDEREDSETS(-,P)_{\restriction\OP\DEL}$.  
  We identify $(\sn P)[0]$ with $P$ for each preordered set $P$.  
\end{defn}

Simplicial sets naturally admit ``simplicial preorders'' as follows.  
The simplicial sets $\DEL[n]$ naturally lift along the forgetful functor $\PREORDEREDSETS\ra\SETS$ to functors
$$([n]^{-})_{\restriction\OP\DEL}:\OP\DEL\ra\PREORDEREDSETS$$
naturally sending each ordinal $[m]$ to the preordered set $[n]^{[m]}$ of monotone functions $[m]\ra[n]$ equipped with the point-wise partial order.  
General simplicial sets naturally lift to $\PREORDEREDSETS$ as follows.

\begin{defn}
  \label{defn:preordered.presheaves}
  We sometimes identify a simplicial set $B$ with the functor
  \begin{equation*}
    \int_{\DEL}^{[n]}B([n])\cdot([n]^{-})_{\restriction\OP\DEL}:\OP\DEL\ra\PREORDEREDSETS.
  \end{equation*}
  We write $\graph{\leqslant_B}$ for the subpresheaf of $B^2$ naturally assigning to each non-empty finite ordinal $[n]$ the graph $\graph{\leqslant_{B[n]}}\subset(B[n])^2$ of the preorder on the preordered set $B[n]$.
\end{defn}

\begin{lem}
  \label{lem:graph.nerves}
  There exists a bijection
  \begin{equation}
    \label{eqn:graph.nerves}
    \sn P^{[1]}\cong\graph{\leqslant_{\sn P}}
  \end{equation}
  natural in preordered sets $P$.  
\end{lem}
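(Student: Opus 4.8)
The plan is to unwind both sides of \eqref{eqn:graph.nerves} as functors $\OP\DEL\ra\SETS$ and exhibit a natural isomorphism level-wise, then check naturality in $P$. First I would evaluate the left-hand side: by definition $\sn P^{[1]}$ is the internal hom $(\sn P)^{\DEL[1]}$ in $\PRESHEAVES\DEL$, so its $[n]$-simplices are $\PRESHEAVES\DEL(\DEL[n]\times\DEL[1], \sn P)$. Since $\sn$ is a right adjoint-style nerve (it is $\PREORDEREDSETS(-,P)$ restricted along $\OP\DEL\hookrightarrow\OP\PREORDEREDSETS$), and $\DEL[n]\times\DEL[1]$ is the nerve of the poset $[n]\times[1]$, I would use the standard fact that $\PRESHEAVES\DEL(\sn Q, \sn P)\cong\PREORDEREDSETS(Q,P)$ for posets $Q$ — equivalently, that the nerve is fully faithful on $\DEL$-shaped posets — to identify $(\sn P^{[1]})[n]$ with $\PREORDEREDSETS([n]\times[1], P)$, i.e. with monotone maps $[n]\times[1]\ra P$.

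Next I would evaluate the right-hand side: $\graph{\leqslant_{\sn P}}[n]$ is, by Definition \ref{defn:preordered.presheaves}, the graph of the point-wise preorder on the preordered set underlying $(\sn P)[n]$. Now $(\sn P)[n]=\PREORDEREDSETS([n],P)$, and under the identification in Definition \ref{defn:preordered.presheaves} this set carries the point-wise order: $\alpha\leqslant\beta$ iff $\alpha(i)\leqslant_P\beta(i)$ for all $i\in[n]$. So $\graph{\leqslant_{\sn P}}[n]$ is the set of pairs $(\alpha,\beta)$ of monotone maps $[n]\ra P$ with $\alpha\leqslant\beta$ point-wise. The key bijection is then the classical "exponential" adjunction between monotone maps $[n]\times[1]\ra P$ and point-wise-ordered pairs of monotone maps $[n]\ra P$: a monotone $F\colon[n]\times[1]\ra P$ corresponds to the pair $(F(-,0),F(-,1))$, which is point-wise ordered precisely because $F$ is monotone in the second coordinate, and conversely a point-wise-ordered pair $(\alpha,\beta)$ assembles into a monotone map on the product poset. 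I would spell out that this assignment is a bijection and that it is visibly compatible with the simplicial operators: precomposition by a monotone $[m]\ra[n]$ acts on both sides by restriction in the first coordinate.

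Finally, naturality in $P$ is immediate since every identification above — the full faithfulness of $\sn$, the description of the point-wise order on $\PREORDEREDSETS([n],P)$, and the exponential bijection — is manifestly functorial in $P$. I expect the main obstacle to be purely bookkeeping: being careful that the preorder Definition \ref{defn:preordered.presheaves} puts on $B[n]$ for $B=\sn P$ is genuinely the point-wise order on $\PREORDEREDSETS([n],P)$ (this uses that $\sn P$ is a nerve, so the coend in Definition \ref{defn:preordered.presheaves} collapses), and that the internal hom $(-)^{[1]}$ in $\PRESHEAVES\DEL$ is being computed as $(-)^{\DEL[1]}$; once those two identifications are pinned down, the exponential bijection $[n]\times[1]$ versus pairs is routine. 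There is no genuinely hard step — the content is recognizing that the "graph" construction is exactly the $\DEL[1]$-cotensor for nerves of preordered sets.
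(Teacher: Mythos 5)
Your argument is correct, but note that you have parsed the left-hand side differently from the paper: there $\sn P^{[1]}$ means $\sn(P^{[1]})$, the nerve of the mapping preordered set $P^{[1]}$, so the paper's first step is the tautology $(\sn (P^{[1]}))[n]\cong\PREORDEREDSETS([n],P^{[1]})$, followed by Cartesian closedness of $\PREORDEREDSETS$ to rewrite this as $\PREORDEREDSETS([1],P^{[n]})\cong\graph{\leqslant_{P^{[n]}}}$. You instead read the left-hand side as the simplicial internal hom $(\sn P)^{\DEL[1]}$ and pay for that with the (true, but extra) facts that $\sn$ is fully faithful on the relevant posets and preserves finite products; since $(\sn P)^{\DEL[1]}\cong\sn(P^{[1]})$ by exactly the currying you invoke, you end up proving an isomorphic statement and no harm is done. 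Beyond that the two proofs coincide: both come down to the exponential law in $\PREORDEREDSETS$ (your bijection between monotone maps $[n]\times[1]\ra P$ and pointwise-ordered pairs is the paper's $\PREORDEREDSETS([n],P^{[1]})\cong\PREORDEREDSETS([1],P^{[n]})$) plus the one genuinely non-formal point, which you correctly isolate, that the preorder Definition \ref{defn:preordered.presheaves} places on $(\sn P)[n]$ via the coend is the pointwise preorder on $\PREORDEREDSETS([n],P)$; the paper disposes of this with the remark that $P^{[n]}$ is the colimit of the preordered sets $[k]^{[n]}$ over all monotone $[k]\ra P$. If you write this up, that colimit identification (your ``the coend collapses for nerves'') is the only step deserving more than a sentence, since one must check that the generated colimit preorder is not strictly smaller than the pointwise one.
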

\begin{proof}
  There exist isomorphisms
  \begin{align*}
    (\sn P^{[1]})[n] 
    \cong\;& \PREORDEREDSETS([n],P^{[1]}) && \text{definition of $\sn$}\\
    \cong\;& \PREORDEREDSETS([1],P^{[n]}) && \text{$\PREORDEREDSETS$ Cartesian closed}\\
    \cong\;& \graph{\leqslant_{P^{[n]}}} && \graph{\leqslant_X}=X^{[1]} \\
    \cong\;& \graph{\leqslant_{\sn P}}[n] & 
  \end{align*}
  of sets natural in preordered sets $P$ and $\DEL$-objects $[n]$.  
  The last line holds because $P^{[n]}$ is the colimit of all preordered sets $[k]^{[n]}$, taken over all monotone functions $[k]\ra P$.  
\end{proof}

\subsection{Subdivisions}\label{subsec:sd}
Edgewise subdivision \cite{segal:edgewise}, otherwise known as ordinal subdivision \cite{ep:ordinal}, plays a role in directed topology analogous to the role barycentric subdivision \cite{curtis:approx} plays in classical topology.
A description \cite{ep:ordinal} of edgewise subdivision in terms of \textit{ordinal sums} in $\DEL$ makes it convenient for us to reason about double edgewise subdivision [Lemma \ref{lem:sd.fold}].  
We refer the reader to \cite{ep:ordinal} for more details and Figure \ref{fig:sd} for a comparison between edgewise and barycentric subdivisions.

\begin{defn}
  We write $\oplus$ for the tensor, sending pairs $[m],[n]$ of finite ordinals to $[m+n+1]$ and pairs $\phi':[m']\ra[n']$, $\phi'':[m'']\ra[n'']$ of monotone functions to the monotone function $(\phi'\oplus\phi''):[m'+m''+1]\ra[n'+n''+1]$ defined by
  \begin{equation*}
  (\phi'\oplus\phi'')(k)=
  \begin{cases}
    \phi'(k), & k=0,1,\ldots,m'\\
    n'+1+\phi''(k-m'-1), & k=m'+1,m'+2,\ldots,m'+m''+1\\
  \end{cases},
  \end{equation*}
  on the category of finite ordinals $[-1]=\varnothing$, $[0]=\{0\}$, $[1]=\{0<1\}$, $[2]=\{0<1<2\}$, \ldots and monotone functions between them.  
\end{defn}

In particular, the empty set $[-1]=\varnothing$ is the unit of the tensor $\oplus$.  
We can thus define natural monotone functions $[n]\ra[n]\oplus[n]$ as follows.  

\begin{defn}
  We write $\gamma^\DEL_{[n]}$, $\bar\gamma^\DEL_{[n]}$ for the monotone functions
  $$\gamma^{\DEL}_{[n]}=\id_{[n]}\oplus([-1]\ra[n]),\bar\gamma^{\DEL}_{[n]}=([-1]\ra[n])\oplus\id_{[n]}:[n]\ra[n]\oplus[n],$$
  natural in finite ordinals $[n]$.
\end{defn}
 
In other words, $\gamma^{\DEL}_{[n]}$ and $\bar\gamma^{\DEL}_{[n]}$ are the functions $[n]\ra[2n+1]$ defined by
$$\gamma^{\DEL}_{[n]}(i)=i,\quad\bar\gamma^{\DEL}_{[n]}(i)=i+n+1.$$

\begin{defn}
  \label{defn:sd}
  We write $\sd$ for the functor $\PRESHEAVES\DEL\ra\PRESHEAVES\DEL$ induced from the functor
  $$(-)^{\oplus 2}:\DEL\ra\DEL.$$
\end{defn}

\begin{figure}[ht]
  \begin{tabular}{cccc}
    \includegraphics[width=20mm,height=20mm]{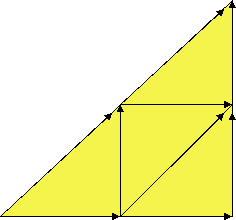} 
  & \includegraphics[width=20mm,height=20mm]{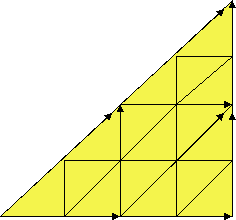}
  & \includegraphics[width=20mm,height=20mm]{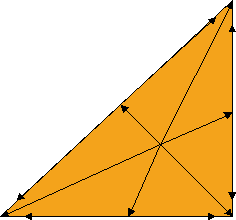} 
  & \includegraphics[width=20mm,height=20mm]{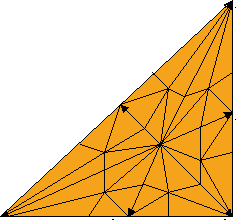}
  \\
    edgewise & double edgewise & barycentric & double barycentric \\
  \end{tabular}
  \caption{Simplicial subdivisions of $\DEL[2]$}
  \label{fig:sd}
\end{figure}

Natural monotone functions $[n]\ra[n]\oplus[n]$ induce natural simplicial functions $\sd B\ra B$, defined as follows.  

\begin{defn}
  We write $\gamma,\bar\gamma$ for the natural transformations
  $$\gamma,\bar\gamma:\sd\ra\id_{\PRESHEAVES\DEL}:\PRESHEAVES\DEL\ra\PRESHEAVES\DEL$$
  respectively induced from the natural monotone functions $\gamma^\DEL_{[n]},\bar\gamma^\DEL_{[n]}:[n]\ra[n]\oplus[n]$.  
\end{defn}

The functor $\sd$, and hence $\sd^2$, are left and right adjoints \cite[\S4]{ep:ordinal} and hence $\sd^2$ preserves monos, intersections of subobjects, and colimits.
Thus we can construct $\supp_{\sd^2}(B,C)\subset C$ [Definition \ref{defn:supports}] for simplicial sets $C$ and subpresheaves $B\subset\sd^2C$.  
The following observations about double edgewise subdivisions of the standard $1$-simplex $\DEL[1]$ later adapt to the cubical setting [Lemmas \ref{lem:cd.fold} and \ref{lem:cd.star.collapse}].  

\begin{lem}
  \label{lem:sd.star.collapse}
  For all atomic subpresheaves $A\subset\sd^2\DEL[1]$ and $v\in A[0]$,
  \begin{equation*}
    (\gamma\bar\gamma)_{\DEL[1]}(A)\subset\supp_{\sd^2}(\langle v\rangle,\DEL[1]).
  \end{equation*}
\end{lem}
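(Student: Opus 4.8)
The plan is to reduce the assertion to a short explicit computation with ordinal sums in $\DEL$; the geometric or homotopical picture of $\sd^2\DEL[1]$ is not needed, only that its $n$-simplices are the monotone maps out of $[n]^{\oplus 4}$.

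First I would identify $\supp_{\sd^2}(\langle v\rangle,\DEL[1])$. The subpresheaves of $\DEL[1]$ are precisely $\varnothing$, $\langle 0\rangle$, $\langle 1\rangle$, $\langle 0\rangle\cup\langle 1\rangle$, and $\DEL[1]$. Since $\sd^2$ preserves monos, coproducts, and intersections of subobjects and fixes the terminal presheaf $\DEL[0]$, applying $\sd^2$ to this list shows that a vertex $v\in(\sd^2\DEL[1])[0]$ lies in some $\sd^2 D$ with $D\subsetneq\DEL[1]$ only when $v$ is the unique vertex $\delta_0$ of $\sd^2\langle 0\rangle$ or the unique vertex $\delta_1$ of $\sd^2\langle 1\rangle$. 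Hence $\supp_{\sd^2}(\langle v\rangle,\DEL[1])$ is $\langle 0\rangle$ if $v=\delta_0$, is $\langle 1\rangle$ if $v=\delta_1$, and is $\DEL[1]$ otherwise. (Equivalently one may invoke Lemma~\ref{lem:atomic.supports} for $F=\sd^2$ together with the observation that the only proper subpresheaves of $\DEL[1]$ which are images of representables are $\langle 0\rangle$ and $\langle 1\rangle$.) The support-$\DEL[1]$ case is vacuous, so it remains to treat $v=\delta_0$, the case $v=\delta_1$ being symmetric.

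Next I would make the relevant data explicit, writing $(\sd^2\DEL[1])[n]=\DEL[1]([n]^{\oplus 4})$ so that an $n$-simplex $\sigma$ is a monotone map $[n]^{\oplus 4}\ra[1]$. Its $i$-th vertex is the restriction of $\sigma$ along the vertex inclusion $[0]^{\oplus 4}\ra[n]^{\oplus 4}$ hitting the $i$-th element of each of the four summands, and by naturality of $\sd^2$ on the inclusion $\langle 0\rangle\ira\DEL[1]$ the vertex $\delta_0$ is, in dimension $n$, the constant map $[n]^{\oplus 4}\ra\{0\}\subset[1]$. The natural transformation $(\gamma\bar\gamma)_{\DEL[1]}$ is induced in dimension $n$ by a monotone map $[n]\ra[n]^{\oplus 4}$ obtained by composing a copy of $\gamma^\DEL$ with a copy of $\bar\gamma^\DEL$; from the formulas defining $\gamma^\DEL$, $\bar\gamma^\DEL$, and $\oplus$ on morphisms one checks that this map is the inclusion of one of the two middle copies of $[n]$ in $[n]^{\oplus 4}=[n]\oplus[n]\oplus[n]\oplus[n]$, so $(\gamma\bar\gamma)_{\DEL[1]}$ sends $\sigma$ to its restriction to that middle summand.

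To finish, let $A=\langle\sigma\rangle$ be atomic with $\delta_0\in A[0]$, $\sigma\in A[n]$ the generating nondegenerate simplex viewed as a monotone map $[n]^{\oplus 4}\ra[1]$. That $\delta_0$ be a vertex of $\sigma$ means $\sigma$ vanishes on the image of one of the vertex inclusions above; the top element of that image lies in the fourth summand of $[n]^{\oplus 4}$, so monotonicity of $\sigma$ forces $\sigma$ to vanish identically on the first three summands, in particular on the middle summand computing $(\gamma\bar\gamma)_{\DEL[1]}$. Thus $(\gamma\bar\gamma)_{\DEL[1]}(\sigma)$ is the constant map to $0$, a degenerate simplex of $\langle 0\rangle$, whence $(\gamma\bar\gamma)_{\DEL[1]}(A)=\langle(\gamma\bar\gamma)_{\DEL[1]}(\sigma)\rangle\subset\langle 0\rangle=\supp_{\sd^2}(\langle v\rangle,\DEL[1])$; the case $v=\delta_1$ is the same argument after interchanging $0$ with $1$ and the first with the fourth summand. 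I expect the only real work to be the bookkeeping in the last two paragraphs — pinning down which summand of $[n]^{\oplus 4}$ the composite defining $(\gamma\bar\gamma)_{\DEL[1]}$ selects, and rephrasing ``$v$ is a vertex of $\sigma$'' as a numerical condition on where the monotone map $\sigma$ vanishes — while everything else is formal manipulation of $\sd^2$ and its adjunctions, and no description of the (non-obvious) degeneracy structure of $\sd^2\DEL[1]$ is required.
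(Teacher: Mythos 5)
Your proposal is correct and follows essentially the same route as the paper: reduce to the case of a constant vertex $v:[3]\ra[1]$ (the non-constant case being vacuous since the support is then all of $\DEL[1]$), identify $(\gamma\bar\gamma)_{\DEL[1]}$ in dimension $n$ as restriction of $\sigma:[n]^{\oplus 4}\ra[1]$ to a middle copy of $[n]$, and use monotonicity of $\sigma$ together with the positions of the vertex $v$ inside $[n]^{\oplus 4}$ to force $\sigma$ to be constant on that middle copy. The only (immaterial) difference is that the paper sandwiches the middle summand between the values $v(0)=\sigma(k)$ and $v(2)=\sigma(k+2n+2)$, handling the constants $0$ and $1$ uniformly, whereas you bound from the value at $k+3n+3$ and invoke the $0\leftrightarrow 1$ symmetry.
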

\begin{proof}
  Each $v\in A[0]\subset\DEL([0]\oplus[0]\oplus[0]\oplus[0],[1])$ is a monotone function
  $$v:[3]\ra[1]$$
  
  The case $v:[3]\ra[1]$ non-constant holds because
  \begin{equation*}
    \supp_{\sd^2}(\langle v\rangle,\DEL[1])=\DEL[1].
  \end{equation*}
  
  Consider the case $v$ a constant function. 
  Let $n=\dim A$ and $\sigma$ be the monotone function $[n]\oplus[n]\oplus[n]\oplus[n]\ra[1]$, an element in $(\sd^2\DEL[1])[n]$, such that $A=\langle\sigma\rangle$.
  There exists a $k\in[n]$ such that $v(0)=\sigma(k)$, $v(1)=\sigma(k+n+1)$, $v(2)=\sigma(k+2n+2)$ and $v(3)=\sigma(k+3n+3)$ because $v\in\langle\sigma\rangle[0]$.  
  Therefore
  \begin{equation*}
    v(0)=\sigma(k)\leqslant_{[1]}\sigma(k+1)\leqslant_{[1]}\cdots\leqslant_{[1]}\sigma(k+2n+2)=v(0),
  \end{equation*}
  hence $\sigma(n+1+i)=v(0)$ for all $i\in[n]$, hence
  \begin{eqnarray*}
        (((\gamma\bar\gamma)_{\DEL[1]})_{[n]}\sigma)(i)
    &=& (\sigma(\bar\gamma^{\DEL}_{[n]}\oplus\bar\gamma^{\DEL}_{[n]})(\gamma^{\DEL}_{[n]})(i))\\
    &=& \sigma(n+1+i)\\
    &=& v(0)
  \end{eqnarray*}
  for all $i=0,1,\ldots,n$.  
  Thus $((\gamma\bar\gamma)_{\DEL[1]})_{[n]}\sigma$ is a constant function $[n]\ra[1]$ taking the value $v(0)$ and hence $(\gamma\bar\gamma)_{\DEL[1]}A=\langle v(0)\rangle=\supp_{\sd^2}(\langle v\rangle,\sd^2\DEL[1])$. 
\end{proof}

\begin{lem}
  \label{lem:sd.fold}
  For each atomic subpresheaf $A\subset\sd^2\DEL[1]$, there exists a unique minimal subpresheaf $B\subset\DEL[1]$ such that $A\;\cap\;\sd^2B\neq\varnothing$.
  Moreover, the diagram
  \begin{equation}
  \label{eqn:sd.fold}
  \xymatrix@C=4pc{
    **[l]A\ar[r]\ar[d]_{\pi}
    & \sd^2\DEL[1]\ar[r]^{(\gamma\bar\gamma)_{\DEL[1]}}
    & \DEL[1]\ar@{=}[d]\\
    **[l]A\cap\sd^2B\ar[r]
    & \sd^2\DEL[1]\ar[r]_{(\gamma\bar\gamma)_{\DEL[1]}}
    & \DEL[1],
    }
  \end{equation}
  where $\pi:A\ra A\cap\sd^2B$ is the unique retraction of $A$ onto $A\cap\sd^2B$ and the unlabelled solid arrows are inclusions, commutes.  
\end{lem}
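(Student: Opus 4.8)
The plan is to make the statement completely explicit by first computing $\sd^2\DEL[1]$. Since $\sd$ is cocontinuous, $\sd\DEL[0]=\DEL[0]$, and a direct check (compare Figure \ref{fig:sd}) gives $\sd\DEL[1]\cong\DEL[1]\sqcup_{\DEL[0]}\DEL[1]$ (two copies of $\DEL[1]$ glued at a common endpoint, respecting direction), iterating once shows that $\sd^2\DEL[1]$ is a directed chain of four copies of $\DEL[1]$: it has five vertices $v_0,\dots,v_4$ --- the monotone maps $[0]^{\oplus 4}=[3]\to[1]$ in increasing order --- and four nondegenerate edges $e_1,\dots,e_4$, with $e_j$ running from $v_{j-1}$ to $v_j$. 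Hence the atomic subpresheaves of $\sd^2\DEL[1]$ are exactly the $\langle v_i\rangle$ and the $\langle e_j\rangle$, and none of them contains both $v_0$ and $v_4$. On the source side, $\DEL[1]$ has precisely the five subpresheaves $\varnothing\subset\langle 0\rangle,\langle 1\rangle\subset\langle 0\rangle\cup\langle 1\rangle\subset\DEL[1]$; since $\sd^2$ preserves colimits and monos, $\sd^2\varnothing=\varnothing$, $\sd^2\langle 0\rangle=\langle v_0\rangle$, $\sd^2\langle 1\rangle=\langle v_4\rangle$, and $\sd^2(\langle 0\rangle\cup\langle 1\rangle)=\langle v_0\rangle\sqcup\langle v_4\rangle$. (Alternatively one can avoid naming $\sd^2\DEL[1]$ by noting that $\sd^2\langle 0\rangle$ and $\sd^2\langle 1\rangle$ are distinct representable subpresheaves carried by single vertices, and checking from the formula for the vertices of an atomic $\langle\sigma\rangle\subset\sd^2\DEL[1]$ that both cannot occur in one $\langle\sigma\rangle$.)

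First I would prove existence of the minimal $B$. Put $\mathscr{F}=\{\,B'\subset\DEL[1]\mid A\cap\sd^2B'\neq\varnothing\,\}$. Then $\DEL[1]\in\mathscr{F}$ (as $A\cap\sd^2\DEL[1]=A\neq\varnothing$) while $\varnothing\notin\mathscr{F}$; and since binary intersections distribute over binary unions of subpresheaves and $\sd^2$ preserves unions, $A\cap\sd^2(\langle 0\rangle\cup\langle 1\rangle)=(A\cap\langle v_0\rangle)\cup(A\cap\langle v_4\rangle)$, so $\langle 0\rangle\cup\langle 1\rangle\in\mathscr{F}$ exactly when $\langle 0\rangle\in\mathscr{F}$ or $\langle 1\rangle\in\mathscr{F}$. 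Therefore a minimal element of $\mathscr{F}$ can be neither $\varnothing$ nor $\langle 0\rangle\cup\langle 1\rangle$, so it lies in $\{\langle 0\rangle,\langle 1\rangle,\DEL[1]\}$; and $\langle 0\rangle$ and $\langle 1\rangle$ are not both in $\mathscr{F}$, since that would force the atomic $A$ to contain both $v_0$ and $v_4$. Hence $\mathscr{F}$ has a unique minimal element $B$, with $B\in\{\langle 0\rangle,\langle 1\rangle,\DEL[1]\}$.

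For the square I would split on $B$. If $B=\DEL[1]$ then $A\cap\sd^2B=A$, the unique retraction $\pi$ is $\id_A$, and the square commutes trivially. If $B=\langle 0\rangle$ then $A\cap\sd^2B=A\cap\langle v_0\rangle$ is a nonempty subpresheaf of the representable $\langle v_0\rangle\cong\DEL[0]$, hence equals $\langle v_0\rangle$; so $A\cap\sd^2B$ is terminal, there is a unique map $\pi\colon A\to A\cap\sd^2B$ (automatically a retraction), and the lower composite is the constant map at the vertex $(\gamma\bar\gamma)_{\DEL[1]}(v_0)$. For the upper composite, note $v_0\in A[0]$ and apply Lemma \ref{lem:sd.star.collapse} to get $(\gamma\bar\gamma)_{\DEL[1]}(A)\subset\supp_{\sd^2}(\langle v_0\rangle,\DEL[1])$; but $\supp_{\sd^2}(\langle v_0\rangle,\DEL[1])=\langle 0\rangle\cong\DEL[0]$, since $\langle 0\rangle$ is the least subpresheaf $C\subset\DEL[1]$ with $v_0\in\sd^2C$. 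Thus $(\gamma\bar\gamma)_{\DEL[1]}$ restricted to $A$ factors through a point and so is constant, and evaluating at $v_0$ identifies its value as $(\gamma\bar\gamma)_{\DEL[1]}(v_0)$ --- matching the lower composite. The case $B=\langle 1\rangle$ is identical after exchanging $0$ and $1$.

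The step I expect to require the most care is the explicit identification in the first paragraph: pinning down that $\sd^2\langle 0\rangle$ and $\sd^2\langle 1\rangle$ are single, distinct vertices of $\sd^2\DEL[1]$ and that no atomic subpresheaf meets both of them. Once that combinatorial input is secured, the minimality argument is purely lattice-theoretic, and Lemma \ref{lem:sd.star.collapse} supplies exactly the collapse needed to make the square commute in the remaining nontrivial cases (namely $A=\langle e_1\rangle$ and $A=\langle e_4\rangle$).
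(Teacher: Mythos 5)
Your proof is correct and follows essentially the same route as the paper: a case split on which of $\langle 0\rangle,\langle 1\rangle$ the subpresheaf $A$ meets after applying $\sd^2$, with Lemma \ref{lem:sd.star.collapse} supplying the collapse $(\gamma\bar\gamma)_{\DEL[1]}(A)\subset\supp_{\sd^2}(\langle v\rangle,\DEL[1])$ that makes the square commute. The only cosmetic difference is that you rule out $A$ meeting both $\sd^2\langle 0\rangle$ and $\sd^2\langle 1\rangle$ by explicitly computing $\sd^2\DEL[1]$ as a four-edge chain, whereas the paper deduces the same exclusivity from Lemma \ref{lem:sd.star.collapse} itself.
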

\begin{proof}
  In the case $A\cap\sd^2\langle 0\rangle=A\cap\sd^2\langle 1\rangle=\varnothing$, then $\DEL[1]$ is the unique choice of subpresheaf $B\subset\DEL[1]$ such that $A\cap\sd^2B\neq\varnothing$ and hence $\id_{\DEL[1]}$ is the unique choice of retraction $\pi$ making (\ref{eqn:sd.fold}) commute.

  It therefore remains to consider the case $A\cap\sd^2\langle 0\rangle\neq\varnothing$ and the case $A\cap\sd^2\langle 1\rangle\neq\varnothing$ because the only non-empty proper subpresheaves of $\DEL[1]$ are $\langle 0\rangle$ and $\langle 1\rangle$.  
  We consider the case $A\cap\sd^2\langle 0\rangle\neq\varnothing$, the other case following similarly.  
  Observe
  \begin{equation}
    \label{eqn:sd.fold.proof}
    (\gamma\bar\gamma)_{\DEL[1]}A\subset\langle 0\rangle
  \end{equation}
  [Lemma \ref{lem:sd.star.collapse}.]  
  In particular, $A\cap\sd^2\langle 1\rangle=\varnothing$ because $(\gamma\bar\gamma)_{\langle 1\rangle}(\sd^2\langle 1\rangle)\subset\langle 1\rangle$ [Lemma \ref{lem:sd.star.collapse}].
  Thus $\langle 0\rangle$ is the minimal subpresheaf $B'$ of $B$ such that $A\cap\sd^2 B'\neq\varnothing$.  
  Moreover, the terminal simplicial function $\pi:\DEL[1]\ra\DEL[0]$ makes (\ref{eqn:sd.fold}) commute by (\ref{eqn:sd.fold.proof}).
\end{proof}

\subsection{Realizations}\label{subsec:simplicial.realizations}
The topological $n$-simplex is the subspace
$$\{(t_0,\ldots,t_n)\in[0,1]^{n+1}\;|\;\sum_{i=0}^nt_i=1\}$$
of $\R^{n+1}$.
For convenience, we denote a point $(t_0,\cdots,t_n)$ in the topological $n$-simplex as a formal linear sum $\sum_{i=0}^n t_i(i)$ of abstract symbols $(0),(1),\cdots,(n)$.
Classical geometric realization is the unique functor
$$|-|:\PRESHEAVES\DEL\ra\SPACES$$
preserving colimits, assigning to each simplicial set $\DEL[n]$ the topological $n$-simplex, and assigning to each simplicial function of the form $\DEL[\phi:[m]\ra[n]]:\DEL[m]\ra\DEL[n]$ the linear map $|\DEL[m]|\ra|\DEL[n]|$ defined by
$$|\DEL[\phi]|(\sum_{i=0}^mt_i(i))=\sum_{i=0}^mt_i(\phi(i)).$$
For each simplicial set $B$ and $v\in B[0]$, we write $|v|$ for the image of $|v_*|:|\DEL[0]|\ra|B|$ and call $|v|$ a \textit{geometric vertex} in $|B|$.  

\begin{defn}
  \label{defn:simplicial.direalizations}
  We write $\direalize{-}$ for the unique functor
  $$\direalize{-}\;:\PRESHEAVES\DEL\ra\STREAMS$$
  naturally preserving colimits, assigning to each simplicial set $\DEL[n]$ the space $|\DEL[n]|$ equipped with respective lattice join and meet operations
  $$|\sn\join_{[n]}|,|\sn\meet_{[n]}|:|\sn[n]|^2\ra|\sn[n]|,$$
  and assigning to each simplicial function $\psi:B\ra C$ the stream map $\direalize{B}\ra\direalize{C}$ defined by $|\psi|:|B|\ra|C|$.  
\end{defn}

\begin{figure}[h]
  \begin{tabular}{c}
    \includegraphics[width=30mm,height=30mm]{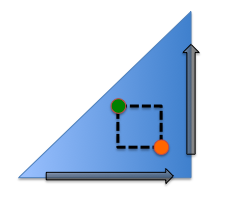} 
  \end{tabular}
  \caption{Lattice structure on a topological simplex.  The corners of the simplex are totally ordered, starting from the bottom left all the way to the top right. The dotted lines connect two generic points to their infimum (bottom left of dotted square) and supremum (top right of dotted square).}
  \label{fig:hep.failure}
\end{figure}

We can directly relate the circulation of a stream realization $\direalize{B}$ with the simplicial structure of $B$ as follows.  

\begin{lem}
  \label{lem:simplicial.preorders}
  There exists a bijection of underlying sets
  $$\graph{\leqslant_{\direalize{\;B\;}}}\cong|\graph{\leqslant_{B}}|$$
  natural in simplicial sets $B$.  
\end{lem}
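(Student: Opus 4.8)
The plan is to reduce the claimed natural bijection $\graph{\leqslant_{\direalize{B}}}\cong|\graph{\leqslant_{B}}|$ to the representable case and then extend by cocontinuity. Both $\graph{\leqslant_{\direalize{-}}}$ and $|\graph{\leqslant_{-}}|$ can be viewed as functors $\PRESHEAVES\DEL\ra\SETS$ (composing with the forgetful functor to underlying sets), and the strategy is to exhibit a natural transformation between them that is an isomorphism on each $\DEL[n]$, and then invoke the density of representables together with the fact that the relevant constructions commute with the colimits expressing $B$ as a colimit of representables.

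First I would fix the direction of the comparison map. By Lemma \ref{lem:graph.nerves} we have $\sn P^{[1]}\cong\graph{\leqslant_{\sn P}}$ naturally in preordered sets $P$; specializing $P=[n]$ and using Definition \ref{defn:preordered.presheaves} (which identifies $\DEL[n]$ with the $\PREORDEREDSETS$-valued functor $[n]^{-}$) gives $\graph{\leqslant_{\DEL[n]}}\cong(\DEL[n])^{\DEL[1]}$ as simplicial sets, hence $|\graph{\leqslant_{\DEL[n]}}|\cong|(\DEL[n])^{\DEL[1]}|$. On the other side, $\direalize{\DEL[n]}$ is by Definition \ref{defn:simplicial.direalizations} the topological simplex $|\DEL[n]|$ equipped with the topological-lattice circulation coming from $|\sn\join_{[n]}|,|\sn\meet_{[n]}|$; its preorder graph is the set of pairs $(x,y)$ with $x\meet y=x$, i.e.\ $x\leqslant y$ coordinatewise in barycentric coordinates viewed through the lattice $|\sn[n]|$. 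So the representable case amounts to checking that the topological order on $|\DEL[n]|$ induced by the lattice structure has graph homeomorphic (on underlying sets, bijection only is claimed) to $|(\DEL[n])^{\DEL[1]}|$ — equivalently to $|\sn([n]^{[1]})|$ via the identification $(\DEL[n])^{\DEL[1]}\cong\sn([n]^{[1]})$. This is a concrete finite-combinatorial/affine computation: a point of $|\sn([n]^{[1]})|$ is a formal convex combination of monotone maps $[1]\ra[n]$, and sending such a combination to its pair of endpoint-images in $|\DEL[n]|$ is visibly a continuous bijection onto the comparable pairs; I would verify surjectivity and injectivity directly, the key point being that the barycentric decomposition of a comparable pair $(x,y)$ into "elementary intervals" is unique.

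Next I would promote this to arbitrary $B$. Write $B=\colim_{\DEL[n]\to B}\DEL[n]$ as a colimit of representables. The functor $\direalize{-}$ preserves colimits (Definition \ref{defn:simplicial.direalizations}), the forgetful functor $\STREAMS\to\PREORDEREDSETS$ preserves colimits and finite products (Proposition \ref{prop:almost.topological}), and $\graph{\leqslant_{-}}\subset(-)^2$ is built from finite products and the order relation; so $\graph{\leqslant_{\direalize{B}}}$ is a colimit, over the category of simplices of $B$, of the sets $\graph{\leqslant_{\direalize{\DEL[n]}}}$. Dually, $\graph{\leqslant_{(-)}}:\PRESHEAVES\DEL\to\PRESHEAVES\DEL$ as defined in Definition \ref{defn:preordered.presheaves} is a subpresheaf of $(-)^2$; I would check (this is the routine part) that it is cocontinuous — each component $\graph{\leqslant_{B[n]}}\subset(B[n])^2$ is computed from the levelwise preorders, and since $B[n]$ as a preordered set is the colimit of the $[k]^{[n]}$ over monotone $[k]\to B$ (the fact used at the end of the proof of Lemma \ref{lem:graph.nerves}), the graph is the corresponding colimit — and then $|-|$ preserves colimits, so $|\graph{\leqslant_B}|$ is the same colimit of the sets $|\graph{\leqslant_{\DEL[n]}}|$. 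The representable-case bijection, being natural in $[n]$, then assembles into the desired bijection for $B$, natural in $B$.

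I expect the main obstacle to be the representable case, specifically the verification that the canonical affine map $|\sn([n]^{[1]})|\to\graph{\leqslant_{|\DEL[n]|}}$ is a bijection: surjectivity is the assertion that every pair $x\leqslant y$ in $|\DEL[n]|$ is realized by a convex combination of comparable vertex-pairs (a directed analogue of the statement that a point of $|\DEL[n]|$ lies in a unique open simplex of the barycentric subdivision), and injectivity requires controlling the redundancy among such combinations. A secondary, more bookkeeping-level obstacle is making precise that $\graph{\leqslant_{(-)}}$ and $\graph{\leqslant_{\direalize{(-)}}}$ genuinely agree as functors to $\SETS$ under the cocontinuity reductions — i.e.\ that the colimit describing the circulation on $\direalize{B}$ (which a priori only says the graph of $\leqslant_{\direalize{B}}$ is the \emph{smallest} preorder graph containing the images of the $\graph{\leqslant_{\direalize{\DEL[n]}}}$, per Definition \ref{defn:streams}) already equals that union without needing further transitive closure; this should follow because the gluing maps are the face and degeneracy operators, which are lattice maps, so the union of the pieces is already transitively closed.
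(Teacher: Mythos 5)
Your global strategy --- establish the bijection for $B=\DEL[n]$ and then extend by cocontinuity of $\direalize{-}$, $|-|$, and the forgetful functor $\STREAMS\ra\PREORDEREDSETS$ --- is the same as the paper's, and your closing worry about the colimit step (that the graph of the colimit preorder must be seen to agree with the colimit of the graphs, without further transitive closure) is a legitimate concern that the paper also treats tersely. Where you genuinely diverge is in the representable case, and there your proposal stops exactly where the work begins. You propose to check by hand that the affine map $|\sn([n]^{[1]})|\ra|\DEL[n]|^2$ is a bijection onto the set of comparable pairs, and you correctly flag this as the main obstacle; but the surjectivity claim --- that every pair $x\leqslant_{\direalize{\;\DEL[n]\;}}y$ decomposes, uniquely, as a convex combination of comparable vertex pairs --- is precisely the nontrivial content. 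It amounts to showing that the chain triangulation of $|\DEL[n]|\times|\DEL[n]|$ restricts to a triangulation of the order subspace; one would have to argue, for instance, that the linear map realizing the poset retraction $(i,j)\mapsto(i\meet_{[n]}j,\,i\join_{[n]}j)$ of $[n]^2$ onto $[n]^{[1]}$ fixes a point only if the chain carrying that point already lies in $[n]^{[1]}$. The paper dissolves this step entirely: by Lemma \ref{lem:lattices} the graph of a lattice order is the equalizer of $\meet\times\join$ against the identity on the square, so $\graph{\leqslant_{\direalize{\;\DEL[n]\;}}}$ is a finite limit built from $|\sn\meet_{[n]}|$ and $|\sn\join_{[n]}|$; since $|\sn-|$ preserves finite limits, this limit is $|\sn[n]^{[1]}|=|\graph{\leqslant_{\DEL[n]}}|$ by Lemma \ref{lem:graph.nerves}, with no pointwise computation. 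Your route can be completed, but as written the key verification is asserted rather than proved, whereas the paper's limit argument makes it automatic.
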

\begin{proof}
  For the case $B=\DEL[n]$, 
  \begin{align*}
    &\;\;\;\;\graph{\leqslant_{\direalize{\;\DEL[n]\;}}}\\
    &\cong \lim(|\sn\meet_{[n]}|\times|\sn\join_{[n]}|,\id_{|\sn[n]|^2}:|\sn[n]|^2\ra|\sn[n]|^2)\\
    &\cong |\sn\lim(\meet_{[n]}\times\join_{[n]},\id_{[n]^2}:[n]^2\ra[n]^2)|\\
    &\cong |\sn [n]^{[1]}|\\
    &\cong |\graph{\leqslant_{\DEL[n]}}|.
  \end{align*}
  Among the above bijections, the first and third follow from Lemma \ref{lem:lattices}, the second follows from $|\sn-|$ finitely continuous, and the last follows from Lemma \ref{lem:graph.nerves}.
  The general case follows because the forgetful functor $\STREAMS\ra\PREORDEREDSETS$ preserves colimits [Proposition \ref{prop:almost.topological}].
\end{proof}

\begin{thm}
  \label{thm:direalization.preserves.finite.products}
  The functor $\direalize{-}\;:\hat\DEL\ra\STREAMS$ preserves finite products.
\end{thm}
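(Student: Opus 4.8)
The plan is to reduce the statement about arbitrary finite products of simplicial sets to the single calculation on representables $\DEL[m]\times\DEL[n]$, and there to exploit the lattice structure that defines $\direalize{-}$ on representables. Since $\direalize{-}$ preserves colimits by definition and every simplicial set is a colimit of representables, and since finite products commute with colimits in each variable in both $\PRESHEAVES\DEL$ (a presheaf topos, hence Cartesian closed) and $\STREAMS$ (Cartesian closed by Theorem \ref{thm:x.closed}), a standard density argument reduces the claim to showing that the canonical comparison stream map
\begin{equation*}
  \direalize{\DEL[m]\times\DEL[n]}\ra\direalize{\DEL[m]}\times\direalize{\DEL[n]}
\end{equation*}
is an isomorphism of streams, naturally in $[m],[n]$.

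First I would observe that on underlying spaces this map is already known to be a homeomorphism: classical geometric realization $|-|:\PRESHEAVES\DEL\ra\SPACES$ preserves finite products (here the weak Hausdorff $k$-space setting is exactly what makes $|B\times C|\cong|B|\times|C|$ hold without finiteness hypotheses), and the forgetful functor $\STREAMS\ra\SPACES$ creates limits by Proposition \ref{prop:topological}, so it suffices to check that the two circulations on this common underlying space agree. By Lemma \ref{lem:k-embeddings} it is enough to compare them on compact Hausdorff substreams, and by the definition of a stream map the comparison map is automatically a continuous monotone-on-opens map in one direction; the content is the reverse inequality of circulations. Equivalently, using Proposition \ref{prop:almost.topological} — the forgetful functor $\STREAMS\ra\PREORDEREDSETS$ preserves finite products and colimits — together with Lemma \ref{lem:simplicial.preorders}, it suffices to check that the \emph{global} preorders match, i.e. that $\graph{\leqslant_{\direalize{\DEL[m]\times\DEL[n]}}}$ maps isomorphically onto $\graph{\leqslant_{\direalize{\DEL[m]}}}\times\graph{\leqslant_{\direalize{\DEL[n]}}}$, because a circulation on a compact Hausdorff stream realization is determined by its global preorder (this is the content of the topological-lattice picture in Theorem \ref{thm:embed}, since $\direalize{\DEL[m]}$ is a compact connected topological lattice).

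The key computation, then, is order-theoretic and combinatorial. By Lemma \ref{lem:simplicial.preorders} applied to $B=\DEL[m]\times\DEL[n]=\sn([m]\times[n])$ (the product of representables is the nerve of the product preorder, since $\sn$ is a right adjoint), $\graph{\leqslant_{\direalize{\DEL[m]\times\DEL[n]}}}\cong|\graph{\leqslant_{\DEL[m]\times\DEL[n]}}|$, and by Lemma \ref{lem:graph.nerves} the latter is $|\sn(([m]\times[n])^{[1]})|$. On the other side, $\graph{\leqslant_{\direalize{\DEL[m]}}}\times\graph{\leqslant_{\direalize{\DEL[n]}}}\cong|\sn([m]^{[1]})|\times|\sn([n]^{[1]})|$, and since geometric realization preserves finite products this is $|\sn([m]^{[1]}\times[n]^{[1]})|$. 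So the whole thing comes down to the evident natural isomorphism of preordered sets $([m]\times[n])^{[1]}\cong[m]^{[1]}\times[n]^{[1]}$ (exponentials distribute over products), compatibly with the lattice operations $\meet$ and $\join$ that $\direalize{-}$ uses to define the circulation on representables — i.e. one must check that $\join$ and $\meet$ on $\direalize{\DEL[m]\times\DEL[n]}$ are the coordinatewise operations, which is immediate from $|\sn-|$ being finitely continuous (Lemma \ref{lem:lattices}) and $\join_{[m]\times[n]}$, $\meet_{[m]\times[n]}$ being coordinatewise.

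The main obstacle I anticipate is not any single step but the bookkeeping needed to pass cleanly from representables to arbitrary simplicial sets: one must be careful that the comparison map $\direalize{B\times C}\ra\direalize{B}\times\direalize{C}$ really is the colimit of the comparison maps on representables, which uses that $-\times C:\STREAMS\ra\STREAMS$ and $-\times C:\PRESHEAVES\DEL\ra\PRESHEAVES\DEL$ both preserve colimits (the first by Cartesian closedness, Theorem \ref{thm:x.closed}; the second automatically in a topos) and that $\direalize{-}$ intertwines these. The only genuinely delicate topological input is that $|-|$ preserves finite products on \emph{all} simplicial sets in the $k$-space setting — but this is the classical fact underlying Theorem \ref{thm:direalization.preserves.finite.products} and may be cited; everything the directed structure contributes is the harmless order-theoretic identity $(-\times-)^{[1]}\cong(-)^{[1]}\times(-)^{[1]}$ above.
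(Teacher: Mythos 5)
Your proposal is correct and takes essentially the same route as the paper: reduce to representables using cocontinuity of $\direalize{-}$ and the commutation of finite products with colimits in $\PRESHEAVES\DEL$ and $\STREAMS$, note the comparison map is a homeomorphism because classical $|-|$ preserves finite products, and identify the circulations by computing $\graph{\leqslant_{\direalize{\;\sn L\;}}}$ via Lemmas \ref{lem:graph.nerves} and \ref{lem:simplicial.preorders} and concluding with the full embedding of compact connected Hausdorff topological lattices into streams (Theorem \ref{thm:embed}). Your key identity $([m]\times[n])^{[1]}\cong[m]^{[1]}\times[n]^{[1]}$ is just a repackaging of the paper's computation of $\graph{\leqslant_L}$ as $\lim(\meet_L\times\join_L,\id_{L^2})$ preserved by the finitely continuous functor $|\sn-|$.
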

\begin{proof}
  There exist bijections
  \begin{align*}
    &\;\;\;\;\graph{\leqslant_{\direalize{\;\sn L\;}}}\\
    &\cong |\graph{\leqslant_{\sn L}}|\\
    &\cong |\sn L^{[1]}|\\
    &\cong |\sn\lim(\meet_{L}\times\join_{L},\id_{L^2}:L^2\ra L^2)|\\
    &\cong \lim(|\sn\meet_{L}|\times|\sn\join_{L}|,\id_{|\sn L|^2}:|\sn L|^2\ra|\sn L|^2)
  \end{align*}
  natural in lattices $L$, and hence the underlying preordered set of $\direalize{\sn L}$ is a lattice natural in lattices $L$.
  Among the above bijections, the first follows from Lemma \ref{lem:simplicial.preorders}, the second follows from Lemma \ref{lem:graph.nerves}, the third follows from Lemma \ref{lem:lattices}, and the last follows from $|\sn-|$ finitely continuous.

  The universal stream map $\direalize{A\times B}\;\cong\;\direalize{A}\times\direalize{B}$, a homeomorphism of underlying spaces because $|-|$ preserves finite products, thus defines a bijective lattice homomorphism, and hence isomorphism, of underlying lattices for the case $A=\DEL[m]$, $B=\DEL[n]$, hence a stream isomorphism for the case $A=\DEL[m]$, $B=\DEL[n]$ [Theorem \ref{thm:embed}], and hence a stream isomorphism for the general case because finite products commute with colimits in $\hat\DEL$ and $\STREAMS$ [Theorem \ref{thm:x.closed}].   
\end{proof}

We can recover some information about the orientations of a simplicial set $B$ from relations of the form $x\leqslant_{\direalize{\;B\;}}y$ as follows.
Recall our definition [Definition \ref{defn:minimum.of.atomic.simplicial.set}] of the minimum $\min A$ of an atomic simplicial set $A$.  
\begin{lem}
\label{lem:recover.orientations}
For all preordered sets $P$ and pairs $x\leqslant_{\direalize{\;\sn P\;}}y$,
\begin{equation*}
   \label{eqn:recover.orientations}
   \min\supp\;\!\!_{|-|}(\{x\},\sn P)\leqslant_{P}\min\supp\;\!\!_{|-|}(\{y\},\sn P).
\end{equation*}
\end{lem}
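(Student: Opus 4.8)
The plan is to reduce the general statement to the case of a representable simplicial set $\DEL[n]$ by naturality, then verify that case by explicit computation with barycentric coordinates and the description of $\leqslant_{\direalize{\;\DEL[n]\;}}$ provided by Lemma \ref{lem:simplicial.preorders}. Write $A_x = \supp_{|-|}(\{x\},\sn P)$ and $A_y = \supp_{|-|}(\{y\},\sn P)$; these are atomic subpresheaves of $\sn P$ by Lemma \ref{lem:atomic.supports} (applied to $F=|-|$ viewed as a functor into $\PRESHEAVES{\mathscr{T}}$, since singleton spaces are connected and projective). So $A_x = \langle\sigma_x\rangle$ and $A_y=\langle\sigma_y\rangle$ for uniquely determined nondegenerate simplices $\sigma_x\in(\sn P)[m_x]$, $\sigma_y\in(\sn P)[m_y]$, and $\min A_x = (\sigma_x)_{[0]}(0)$, $\min A_y = (\sigma_y)_{[0]}(0)$, each an element of $(\sn P)[0]=P$.

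First I would unwind what $x\leqslant_{\direalize{\;\sn P\;}}y$ says concretely. By Lemma \ref{lem:simplicial.preorders} the pair $(x,y)$ corresponds to a point of $|\graph{\leqslant_{\sn P}}|$, i.e.\ to a point in the geometric realization of the subpresheaf of $(\sn P)^2$ whose $[n]$-simplices are monotone maps $[n]\ra P$ landing in the graph of $\leqslant_{P^{[n]}}$. Concretely, such a point is represented by a single nondegenerate simplex $\tau\in(\sn P)[k]$ of $\graph{\leqslant_{\sn P}}$ together with an interior barycentric coordinate, and $\tau$ is a pair $(\tau',\tau'')$ of monotone maps $[k]\ra P$ with $\tau'(i)\leqslant_P\tau''(i)$ for all $i$. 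The key point is that $x$ is the corresponding point of $|\langle\tau'\rangle|$ and $y$ the corresponding point of $|\langle\tau''\rangle|$, so that $\langle\tau'\rangle$ surjects onto $A_x$ and $\langle\tau''\rangle$ onto $A_y$ via the face obtained by restricting to those coordinates on which the barycentric coordinate is nonzero. In particular $\min A_x = \tau'(i_0)$ and $\min A_y = \tau''(i_0)$ where $i_0$ is the least index with nonzero barycentric coordinate. Then $\min A_x = \tau'(i_0)\leqslant_P \tau''(i_0) = \min A_y$, which is exactly the claim.

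The step I expect to be the main obstacle is making the bookkeeping in the previous paragraph precise: identifying, for a point of a geometric realization, its support with the face spanned by the strictly-positive barycentric coordinates, and checking that the map $\graph{\leqslant_{\sn P}}\ra(\sn P)^2$ followed by either projection carries the canonical representing simplex of $(x,y)$ to representing simplices of $x$ and of $y$ compatibly with supports — i.e.\ that $\supp_{|-|}$ commutes with the projections in the appropriate sense. This is where I would lean on Lemma \ref{lem:atomic.supports} and on the standard fact that a point of $|B|$ has a unique representative $(\sigma,u)$ with $\sigma$ nondegenerate and $u$ interior, so that $\supp_{|-|}(\{x\},B)=\langle\sigma\rangle$. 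Once that is in hand, the inequality $\tau'(i_0)\leqslant_P\tau''(i_0)$ is immediate from the defining condition on simplices of $\graph{\leqslant_{\sn P}}$, and the reduction to representables is handled by naturality of the bijection in Lemma \ref{lem:simplicial.preorders} together with the fact that both $\min$ and $\supp_{|-|}$ are natural with respect to the inclusions $\langle\tau'\rangle,\langle\tau''\rangle\hookrightarrow\sn P$ and the map $\graph{\leqslant_{\sn P}}\ra(\sn P)^2$.
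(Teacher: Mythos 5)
Your proposal is correct, but it takes a genuinely different route from the paper. The paper first reduces to $P=[n]$ (via cocontinuity of $\direalize{-}$ and of the forgetful functor $\STREAMS\ra\PREORDEREDSETS$) and then argues analytically: writing $x,y$ in barycentric coordinates, the hypothesis gives $y=x\join_{\direalize{\;\DEL[n]\;}}y$, and bilinearity of the join yields $y_j=\sum_{i\leqslant j}x_iy_j$, so every nonzero coordinate of $y$ is preceded by a nonzero coordinate of $x$; the inequality of minima follows. You instead invoke Lemma \ref{lem:simplicial.preorders} to represent the relation $x\leqslant_{\direalize{\;\sn P\;}}y$ by a single simplex $(\tau',\tau'')$ of $\graph{\leqslant_{\sn P}}$, i.e.\ a pointwise-related pair of monotone maps $[k]\ra P$ evaluated at a common barycentric coordinate, and read off $\min\supp_{|-|}(\{x\},\sn P)=\tau'(i_0)\leqslant_P\tau''(i_0)=\min\supp_{|-|}(\{y\},\sn P)$. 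The paper's computation avoids the Eilenberg--Zilber bookkeeping and the question of compatibility of the bijection in Lemma \ref{lem:simplicial.preorders} with the two projections to $|\sn P|$; your argument handles general $P$ without the colimit reduction (which in the paper silently requires chasing the relation through a transitive closure) and in fact proves something stronger, namely that the two supports are related level-by-level by a simplex of $\graph{\leqslant_{\sn P}}$, not merely at their minimal vertices. The compatibility you flag as the main obstacle does hold: in the proof of Lemma \ref{lem:simplicial.preorders} the relevant limit is computed inside $|\sn[n]|^2$, so the identification respects both projections; with that settled your argument is complete.
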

\begin{proof}
  The underlying preordered set of $\direalize{\sn P}$ is the colimit, over all monotone functions $[k]\ra P$, of underlying preordered sets of topological lattices $\direalize{\DEL[k]}$ because $\direalize{-}:\PRESHEAVES\DEL\ra\STREAMS$ and the forgetful functor $\STREAMS\ra\PREORDEREDSETS$ are cocontinuous [Proposition \ref{prop:almost.topological}].  

  Therefore it suffices to consider the case $P=[n]$.
  For each $t\in|\DEL[n]|$, let $t_0,\cdots,t_n$ denote the real numbers such that $t=\sum_{i=0}^nt_i(i)$.
  Then
  \begin{align*}
   \sum_{i=0}^ny_i(i)
  &= y && \quad\text{definition of $t_i$'s}\\
  &= x\join_{\direalize{\;\DEL[n]\;}}y && \quad x\leqslant_{\direalize{\;\sn P\;}}y\\
  &= \left(\sum_{i=0}^nx_i(i)\right)\join_{\direalize{\;\DEL[n]\;}}\left(\sum_{j=0}^ny_j(j)\right) && \quad\text{definition of $t_i$'s}\\
  &= \sum_{i,j\in[n]}(x_iy_j)(\max(i,j)) && \quad\text{linearity of $\join_{\direalize{\;\DEL[n]\;}}$}.
  \end{align*}
  We conclude that for each $j=0,1,...,n$,
  $$y_j=\!\!\!\!\!\!\sum_{\max(i,j)=j}\!\!\!\!x_iy_j=\sum_{i=0}^jx_iy_j$$
  and hence $y_j\neq 0$ implies the existence of some $i=0,1,...,j$ such that $x_i\neq 0$.  
  Thus
\begin{eqnarray*}
  \min\direalize{\supp\;\!\!_{|-|}(\{x\},\sn P)}
  &=& \min\{i\;|\;x_i\neq 0\}\\
  &\leqslant_{[n]}& \min\{j\;|\;y_j\neq 0\}\\
  &=&  \min\direalize{\supp\;\!\!_{|-|}(\{y\},\sn P)}.
\end{eqnarray*}
\end{proof}

Edgewise subdivisions respect geometric realizations as follows.

\begin{defn}
  We write $\shomeo_{\DEL[n]}$ for the piecewise linear map
  \begin{equation*}
    \shomeo_{\DEL[n]}:|\sd \DEL[n]|\cong|\DEL[n]|=\nabla[n],
  \end{equation*}
  natural in non-empty finite ordinals $[n]$, characterized by the rule
  \begin{equation*}
    |\phi|\mapsto\half|\phi(0)|+\half|\phi(1)|,\quad\phi\in(\sd\DEL[n])[0]=\DEL([0]\oplus[0],[n]).
  \end{equation*}
\end{defn}

These maps $\shomeo_{\DEL[n]}$, \textit{prism decompositions} in the parlance of \cite{mardisec:prisms}, define homeomorphisms \cite{mardisec:prisms}.  
The restriction of the function $\shomeo_{\DEL[n]}:|\sd\DEL[n]|\ra|\DEL[n]|$ to the geometric vertices $\PREORDEREDSETS([0]\oplus[0],[n])$ of $|\sd\DEL[n]|$ is a lattice homomorphism
$$[n]^{[0]\oplus[0]}\ra\direalize{\DEL[n]}$$
natural in $\DEL$-objects $[n]$ by $\join_{\direalize{\;\DEL[n]\;}},\meet_{\direalize{\;\DEL[n]\;}}:|\DEL[n|^2\ra|\DEL[n]|$ linear.
Thus
$$\shomeo_{\DEL[n]}:\direalize{\DEL[n]}\ra\direalize{\DEL[n]},$$
the linear extension of a lattice homomorphism, defines a lattice isomorphism by $\meet_{\direalize{\;\DEL[n]\;}}$ and $\join_{\direalize{\;\DEL[n]\;}}$ linear, and hence stream isomorphism [Theorem \ref{thm:embed}], natural in ordinals $[n]$.  
We can thus make the following definition.  

\begin{defn}
  We abuse notation and write $\shomeo_B$ for the isomorphism
  \begin{equation*}
    \shomeo_B:\;\direalize{\sd B}\;\cong\;\direalize{B},
  \end{equation*}
  of streams natural in simplicial sets $B$, defined by prism decompositions for the case $B$ representable.  
\end{defn}

\section{Cubical models}\label{sec:cubical}
Cubical sets are combinatorial and economical models of directed spaces \cite{fgr:ditop, goubault:transition}.  
Cubical subdivisions appropriate for directed topology, unlike edgewise subdivisions of simplicial sets, mimic properties of simplicial barycentric subdivision [Lemmas \ref{lem:cd.fold} and \ref{lem:cd.star.collapse}] that make classical simplicial approximation techniques adaptable to the directed setting.  

\subsection{Cubical sets}\label{subsec:cubical.sets}
We refer the reader to \cite{gl:sites} for basic cubical theory.
Definitions of the \textit{box category}, over which cubical sets are defined as presheaves, are not standard \cite{gl:sites}.  
We adopt the simplest of the definitions.

\begin{defn}
  We write $\BOX_1$ for the full subcategory of $\PREORDEREDSETS$ containing the ordinals $[0]$ and $[1]$.  
  Let $\BOX$ be the smallest subcategory of $\PREORDEREDSETS$ containing $\BOX_1$ and closed under binary $\PREORDEREDSETS$-products.  
  We write $\otimes$ for the tensor on $\BOX$ defined by binary $\PREORDEREDSETS$-products.  
\end{defn}

To avoid confusion between tensor and Cartesian products in $\BOX$, we write
$$[0],[1],\boxobj{2},\boxobj{3},\ldots.$$
for the $\BOX$-objects.  
We will often use the following characterization [\cite[Theorem 4.2]{gl:sites} and \cite[Proposition 8.4.6]{cisinski:presheaves}] of $\BOX$ as the free monoidal category generated by $\BOX_1$, in the following sense, without comment.
For each monoidal category $\GENERIC$ and functor $F:\BOX_1\ra\GENERIC$ of underlying categories sending $[0]$ to the unit of $\GENERIC$, there exists a unique dotted monoidal functor, up to natural isomorphism, making the following diagram, in which the vertical arrow is inclusion, commute.  
\begin{equation*}
\xymatrix{
  **[l]\BOX_1\ar[r]^F\ar[d]  
& **[r]\GENERIC\\
  **[l]\BOX,\ar@{.>}[ur]
}
\end{equation*}

Injective $\BOX$-morphisms are uniquely determined by where they send extrema.  
A proof of the following lemma follows from a straightforward verification for $\TRUNCATED{1}{\BOX}$-morphisms and induction.  

\begin{lem}\label{lem:cubical.extrema}
  There exists a unique injective $\BOX$-morphism of the form
  $$\boxobj{m}\ra\boxobj{n}$$
  sending $(0,\cdots,0)$ to $\varepsilon'$ and $(1,\cdots,1)$ to $\varepsilon''$, for each $n=0,1,\ldots$ and $\varepsilon'\leqslant_{\boxobj{n}}\varepsilon''$.
\end{lem}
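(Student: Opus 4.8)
The plan is to read off both halves of the statement from a classification of the injective $\BOX$-morphisms with codomain $\boxobj n$, built up by induction on $n$. Observe at the outset that, because $\varepsilon'\leqslant_{\boxobj n}\varepsilon''$, the coordinates in which $\varepsilon'$ and $\varepsilon''$ differ are exactly those $i$ with $\varepsilon'_i=0$ and $\varepsilon''_i=1$; call this set of coordinates $D$. An injective $\BOX$-morphism $\boxobj m\to\boxobj n$ sending $(0,\dots,0)$ to $\varepsilon'$ and $(1,\dots,1)$ to $\varepsilon''$ can only exist when $m=|D|$, and I take this as the meaning of the parameter $m$ in the statement.

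For \emph{existence}, I would write $\delta^0,\delta^1\colon[0]\to[1]$ for the two non-identity $\TRUNCATED 1\BOX$-morphisms out of $[0]$ and, using that $[0]$ is the monoidal unit, assemble the tensor
\[
  \varphi \;=\; \textstyle\bigotimes_{i=1}^{n} h_i\colon\boxobj m\longrightarrow\boxobj n,
  \qquad
  h_i=\begin{cases}\mathrm{id}_{[1]},&i\in D,\\[2pt]\delta^{\varepsilon'_i},&i\notin D,\end{cases}
\]
in which the $|D|=m$ blocks with $i\in D$ absorb the $m$ tensor factors of $\boxobj m$ in order and the blocks with $i\notin D$ absorb none. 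A tensor of monics is monic in $\PREORDEREDSETS$, hence in $\BOX$, and a direct check gives $\varphi(0,\dots,0)=\varepsilon'$ and $\varphi(1,\dots,1)=\varepsilon''$.

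For \emph{uniqueness} I would induct on $n$. When $n=0$ the object $\boxobj0=[0]$ is terminal, so $\mathrm{id}_{[0]}$ is the only monic into it. For $n\geq 1$, regard $\boxobj n$ as $\boxobj{(n-1)}\otimes[1]$ and let $q\colon\boxobj n\to[1]$ be the projection onto the last factor, itself a $\BOX$-morphism. Given a monic $h\colon\boxobj m\to\boxobj n$ with the prescribed extrema, the composite $qh$ lies in $\BOX(\boxobj m,[1])$, and the structural input --- the ``straightforward verification for $\TRUNCATED 1\BOX$-morphisms'' of the statement, carried along the universal property of $\BOX$ as the free monoidal category on $\TRUNCATED 1\BOX$ --- is that $\BOX(\boxobj m,[1])$ consists of precisely the $m$ coordinate projections and the two constants, and that a monic $\BOX$-morphism uses each of its source factors exactly once, in increasing order. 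So there are two cases. If $\varepsilon'_n=\varepsilon''_n$ then $qh$ is the constant at $\varepsilon'_n$, whence $h=(\mathrm{id}_{\boxobj{(n-1)}}\otimes\delta^{\varepsilon'_n})\circ g$ for a monic $g\colon\boxobj m\to\boxobj{(n-1)}$; if $\varepsilon'_n\neq\varepsilon''_n$ then $qh$ is a projection, forced by the ordering constraint to be the projection onto the last source factor, whence $h=g\otimes\mathrm{id}_{[1]}$ for a monic $g\colon\boxobj{(m-1)}\to\boxobj{(n-1)}$. In both cases $g$ sends the extrema of its domain to the pair gotten from $(\varepsilon',\varepsilon'')$ by deleting the last coordinate, still comparable in $\boxobj{(n-1)}$, so the inductive hypothesis pins down $g$, and hence $h$; together with existence this identifies the unique such morphism with $\varphi$.

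The real work --- and the only step I expect to be an obstacle --- is the structural classification of $\BOX(\boxobj m,[1])$ together with the normal form ``distinct source factors in increasing order'' for monics. This must be argued from the free-monoidal presentation of $\BOX$, not by inspecting monotone maps of cubes, since the generic monotone maps between cubes (diagonals, connections, transpositions) are exactly what the minimal box category omits. Once that is in place, the construction of $\varphi$, the case split, and the induction are all bookkeeping.
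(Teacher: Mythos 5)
Your proposal is correct and takes essentially the paper's approach: the paper's entire proof is the one-line remark that the lemma follows from a straightforward verification for $\TRUNCATED{1}{\BOX}$-morphisms and induction, and your argument is exactly that induction, with the base-level verification being the classification of $\BOX(\boxobj{m},[1])$ and the normal form for injections supplied by the free-monoidal presentation of $\BOX$ that the paper invokes without comment. Your warning that this normal form must be extracted from the free-monoidal characterization rather than from inspecting arbitrary monotone maps of cubes (which would admit diagonals, connections, and transpositions) is apt, and your write-up is considerably more detailed than the paper's.
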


We regard $\hat\BOX$ as a monoidal category whose tensor $\otimes$ cocontinuously extends the tensor on $\BOX$ along the Yoneda embedding $\BOX[-]:\BOX\ira\PRESHEAVES\BOX$.  
We can regard each tensor product $B\otimes C$ as a subpresheaf of the Cartesian product $B\times C$.  
\textit{Cubical sets} are $\PRESHEAVES\BOX$-objects and \textit{cubical functions} are $\PRESHEAVES\BOX$-morphisms.  
We write the cubical set represented by the box object $\boxobj{n}$ as $\BOX\boxobj{n}$.  
The \textit{vertices} of a cubical set $B$ are the elements of $B([0])$.  
We will sometimes abuse notation and identify a vertex $v$ of a cubical set $B$ with the subpresheaf $\langle v\rangle$ of $B$.  
For each cubical set $B$, we write $\dim B$ for the infimum of all natural numbers $n$ such that the natural cubical function $B[n]\cdot\BOX\boxobj{n}\ra B$ is epi.
For each atomic cubical set $A$, there exists a unique $\sigma\in A\boxobj{\dim A}$ such that $A=\langle\sigma\rangle$.  
Atomic cubical sets, analogous to cells in a CW complex, admit combinatorial boundaries defined as follows.  

\begin{defn}
  \label{defn:cubical.dimension}
  For each atomic cubical set $A$, we write
  $$\partial A$$
  for the unique maximal proper subpresheaf of $A$.  
\end{defn}

Generalizing the quotient $Y/X$ of a set $Y$ by a subset $X$ of $Y$, we write $C/B$ for the object-wise quotient of a cubical set $C$ by a subpresheaf $B$ of $C$.  
For each atomic cubical set $A$, the unique epi $\BOX\boxobj{\dim A}\ra A$ passes to an isomorphism
$$\BOX\boxobj{\dim A}/\partial\BOX\boxobj{\dim A}\cong A/\partial A.$$

Each subpresheaf $B$ of a cubical set $C$ admits a combinatorial analogue of a closed neighborhood in $C$, defined as follows.  

\begin{defn}
  For each cubical set $C$ and subpresheaf $B\subset C$, we write 
  $$\Star_CB$$
  for the union of all atomic subpresheaves of $C$ intersecting $B$.
\end{defn}
  
There exist cubical analogues, defined as follows, of simplicial nerves.

\begin{defn}
  We write $\cn$ for the functor
  $$\cn:\PREORDEREDSETS\ra\PRESHEAVES\BOX$$
  naturally sending each preordered set $P$ to the cubical set $\cn P = \PREORDEREDSETS(-,P)_{\restriction\OP\BOX}$.  
\end{defn}

\subsection{Subdivisions}\label{subsec:cd}
We construct cubical analogues of edgewise subdivisions.  
To start, we extend the category $\BOX$ of abstract hypercubes to a category $\XBOX$ that also models abstract subdivided hypercubes.

\begin{defn}
  We write $\XBOX$ for the smallest sub-monoidal category of the Cartesian monoidal category $\PREORDEREDSETS$ generated by all monotone functions of the form $[0]\ra[1]$, $[1]\ra[0]$, $[2]\ra[1]$, and $[1]\ra[2]$ except the function $[1]\ra[2]$ sending $i$ to $2i$.  
  We write $\otimes$ for the tensor on $\XBOX$.
  We abuse notation and also write $\BOX[-]$ for the functor
  $$\XBOX\ra\PRESHEAVES\BOX$$
  naturally sending each $\XBOX$-object $L$ to the cubical set $\XBOX(-,L)_{\restriction\OP\BOX}:\OP\BOX\ra\SETS$.
\end{defn}

The function $[1]\ra[2]$ defined by doubling is omitted from the definition $\XBOX$ because such a map would define superfluous edges spanning twice the length of the edges in the subdivided cubes illustrated in Figure \ref{fig:sd.cd.tri}.
Context will make clear whether $\BOX[-]$ refers to the Yoneda embedding $\BOX\ra\PRESHEAVES\BOX$ or its extension $\XBOX\ra\PRESHEAVES\BOX$.  
A functor $\double:\BOX\ra\XBOX$ describes the subdivision of an abstract cube as follows.

\begin{defn}
  We write $\double$ for the unique monoidal functor
  $$\double:\BOX\ra\XBOX$$
  sending each $\BOX_1$-object $[n]$ to $[2n]$ and each $\BOX_1$-morphism $\delta:[0]\ra[1]$ to the $\XBOX$-morphism $[0]\ra[2]$ sending $0$ to $2\delta(0)$.  
\end{defn}

Natural $\XBOX$-morphisms $\xboxobj{2}{n}\ra\boxobj{n}$ model cubical functions from subdivided hypercubes to ordinary hypercubes.  

\begin{defn}
  We write $\gamma^{\BOX}_{[1]},\bar\gamma^{\BOX}_{[1]}$ for the monotone functions
  $$\gamma^{\BOX}_{[1]}=\max(1,-)-1,\bar\gamma^{\BOX}_{[1]}=\min(-,1):[2]\ra[1].$$
  More generally, we write $\gamma^{\BOX},\bar\gamma^{\BOX}$ for the unique monoidal natural transformations $\double\ra(\id_{\XBOX})_{\restriction\BOX}$ having the above $[1]$-components.
\end{defn}

We extend our subdivision operation from abstract hypercubes to more general cubical sets.  

\begin{defn}
  \label{defn:cd}
  We write $\cd$ for the unique cocontinuous monoidal functor
  $$\cd:\PRESHEAVES\BOX\ra\PRESHEAVES\BOX$$
  extending $\BOX[-]\circ\double:\BOX\ra\PRESHEAVES\BOX$ along $\BOX[-]:\BOX\ra\PRESHEAVES\BOX$.
\end{defn}

Context will make clear whether $\sd$ refers to simplicial or cubical subdivision.  

\begin{defn}
  We write $\gamma$, $\bar\gamma$ for the natural transformations
  $$\cd\ra\id_{\PRESHEAVES\BOX}:\PRESHEAVES\BOX\ra\PRESHEAVES\BOX$$
  induced from the respective natural transformations $\gamma^{\BOX},\bar\gamma^{\BOX}:\double\ra(\id_{\XBOX})_{\restriction\BOX}$.  
\end{defn}

Context will also make clear whether $\gamma,\bar\gamma$ are referring to the natural simplicial functions $\sd B\ra B$ or natural cubical functions $\sd C\ra C$. 
Cubical subdivision shares some convenient properties with simplicial barycentric subdivision.
The following is a cubical analogue of Lemma \ref{lem:sd.fold}.

\begin{lem}
  \label{lem:cd.fold}
  Fix a cubical set $C$. 
  For each atomic subpresheaf $A\subset\cd^2C$, there exist unique minimal subobject $B\subset C$ such that $A\;\cap\;\cd^2B\neq\varnothing$ and unique retraction $\pi:A\ra A\cap\cd^2B$.  
  The diagram
\begin{equation}
  \label{eqn:cd.fold}
  \xymatrix@C=4pc{
  **[l]A\ar[r]\ar[d]_{\pi}
  & \cd^2C\ar[r]^{(\gamma\bar\gamma)_{C}}
  & C\ar@{=}[d]\\
  **[l]A\cap\cd^2B\ar[r]
  & \cd^2C\ar[r]_{(\gamma\bar\gamma)_{C}}
  & C,
  }
\end{equation}
  whose unlabelled solid arrows are inclusions, commutes.  
  Moreover, $A\cap\cd^2B$ is isomorphic to a representable cubical set.  
\end{lem}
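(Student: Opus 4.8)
The plan is to reduce the statement for a general cubical set $C$ to the case of representable $C$, and then to the single generating case $C=\BOX[1]$ where Lemmas \ref{lem:sd.star.collapse}/\ref{lem:sd.fold} and their cubical analogue \ref{lem:cd.star.collapse} do the work. Recall that $\cd$ is cocontinuous and monoidal, and $\cd^2$ preserves monos and intersections of subobjects (it is a left and right adjoint, being built from the adjoint string of $\oplus$-type constructions). Hence for an atomic $A\subset\cd^2C$ one may first replace $C$ by $\supp_{\cd^2}(A,C)$, which by Lemma \ref{lem:atomic.supports} is the image of a representable presheaf — so without loss of generality $C$ is atomic, say $C=\langle\tau\rangle$ with the canonical epi $e:\BOX\boxobj{\dim C}\twoheadrightarrow C$. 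Pulling $A$ back along $\cd^2 e$ reduces us to $C=\BOX\boxobj{n}$ representable, and since $\cd$ is monoidal, $\cd^2\BOX\boxobj{n}\cong(\cd^2\BOX[1])^{\otimes n}$, so an atomic subpresheaf of it is a tensor of atomic subpresheaves of $\cd^2\BOX[1]$ (atomic cubical sets being exactly the $\langle\sigma\rangle$, and the monoidal structure respecting this decomposition).

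First I would handle $C=\BOX[1]$: given atomic $A\subset\cd^2\BOX[1]$, define $B$ to be $\langle v(0)\rangle$ where $v$ is any vertex of $A$ in the constant case, and $B=\BOX[1]$ in the non-constant case; Lemma \ref{lem:cd.star.collapse} (the cubical analogue of \ref{lem:sd.star.collapse}) gives $(\gamma\bar\gamma)_{\BOX[1]}(A)\subset\supp_{\cd^2}(\langle v\rangle,\BOX[1])$, which forces uniqueness of the minimal such $B$ exactly as in the proof of Lemma \ref{lem:sd.fold} (the only nonempty proper subpresheaves of $\BOX[1]$ are $\langle 0\rangle,\langle 1\rangle$, and $A$ cannot meet both $\cd^2\langle 0\rangle$ and $\cd^2\langle 1\rangle$). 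The retraction $\pi:A\to A\cap\cd^2 B$ is the terminal map when $B$ is a vertex and the identity when $B=\BOX[1]$; commutativity of (\ref{eqn:cd.fold}) is then immediate from the containment above, and $A\cap\cd^2 B$ is representable — either a point, or $A$ itself inside $\cd^2\BOX[1]$, whose atomic subpresheaves are visibly representable (they are subdivided subcubes). Then I would tensor: for $C=\BOX\boxobj{n}$ write $A=A_1\otimes\cdots\otimes A_n$, set $B=B_1\otimes\cdots\otimes B_n$ and $\pi=\pi_1\otimes\cdots\otimes\pi_n$; uniqueness, commutativity, and representability all pass through the monoidal structure since $(\gamma\bar\gamma)_{\BOX\boxobj{n}}=(\gamma\bar\gamma)_{\BOX[1]}^{\otimes n}$ and tensor products of representables are representable.

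Finally I would descend from representable to general $C$. Given atomic $A\subset\cd^2 C$, let $C'=\supp_{\cd^2}(A,C)$, which is the image of a representable $\BOX\boxobj{m}$ via some $e:\BOX\boxobj{m}\to C$ with epi corestriction onto $C'$; lifting $A$ through $\cd^2 e$ (using that $\cd^2$ preserves epis and that $A$ is atomic, hence projective-covered) produces an atomic $\tilde A\subset\cd^2\BOX\boxobj{m}$ with $(\cd^2 e)(\tilde A)=A$. Apply the representable case to get $\tilde B\subset\BOX\boxobj{m}$ and $\tilde\pi$; push forward to $B=e(\tilde B)\subset C$ and $\pi$ the induced retraction. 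The compatibility $(\gamma\bar\gamma)_C\circ\cd^2 e=e\circ(\gamma\bar\gamma)_{\BOX\boxobj{m}}$ (naturality of $\gamma\bar\gamma$) transports commutativity of the square, and minimality of $\tilde B$ together with minimality of the support $C'$ gives minimality and uniqueness of $B$; representability of $A\cap\cd^2 B$ follows from representability of $\tilde A\cap\cd^2\tilde B$ since the corestriction of $\cd^2 e$ restricts to an isomorphism there (the support being chosen minimal means no collapsing occurs on $\tilde A$).

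**Main obstacle.** The delicate point is the descent step: one must check that the chosen lift $\tilde A$ can be taken atomic and that $\cd^2 e$ restricts to an \emph{isomorphism} $\tilde A\cap\cd^2\tilde B\xrightarrow{\ \cong\ }A\cap\cd^2 B$ rather than a mere epi — i.e., that minimality of the support $\supp_{\cd^2}(A,C)$ genuinely prevents any identification of cells inside $\tilde A$. This is where the analogue of the classical fact that double subdivision factors through polyhedral complexes is really being used, and it is the place where the cubical argument is more subtle than the simplicial one in Lemma \ref{lem:sd.fold}; I expect it to require the companion Lemma \ref{lem:cd.star.collapse} in an essential way, exactly as flagged in the introduction's remark about quadruple subdivisions factoring locally through representables.
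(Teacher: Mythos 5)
Your overall architecture (reduce to an atomic $C$ via $\supp_{\cd^2}(A,C)$ and Lemma \ref{lem:atomic.supports}, lift to the representable cover $\BOX\boxobj{\dim C}$, settle $\BOX[1]$ by transporting Lemma \ref{lem:sd.fold} through $\tri$, extend to $\BOX\boxobj{n}$ by the monoidal structure, then descend) is the same as the paper's. But the descent step contains a genuine gap, and it is precisely the point you flag as the ``main obstacle.'' You assert that minimality of the support $\supp_{\cd^2}(A,C)$ ``genuinely prevents any identification of cells inside $\tilde A$,'' so that $\cd^2\epsilon$ restricts to an isomorphism on $\tilde A$ and the retraction $\tilde\pi$ descends for free. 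This is false. Take $C=\BOX\boxobj{2}/\partial\BOX\boxobj{2}$ and let $A$ be the atomic subpresheaf of $\cd^2C$ generated by a corner square of the $4\times 4$ subdivided grid. Then $\supp_{\cd^2}(A,C)=C$, yet the epi $\tilde A\ra A$ collapses the two boundary edges and three boundary vertices of the corner square to the single vertex of $C$: minimality of the support controls which subpresheaves of $C$ the image of $A$ meets, not whether $\cd^2\epsilon$ is injective on the cells of $\tilde A$. Consequently one must prove that $\tilde\pi$ is constant on the fibres of $\tilde A\ra A$, i.e.\ that the formula $\pi_n(\sigma)=((\cd^2\epsilon)\tilde\pi)_n(\tilde\sigma)$ is independent of the chosen preimage $\tilde\sigma$. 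The paper does exactly this by induction on $\dim C$: cells $\sigma\notin\cd^2\partial C$ have unique preimages (because $\cd^2\epsilon$ passes to an isomorphism $\cd^2\tilde C/\cd^2\partial\tilde C\cong\cd^2C/\cd^2\partial C$), while cells $\sigma\in\cd^2\partial C$ are handled by applying the inductive hypothesis to $\langle\sigma\rangle$ inside the lower-dimensional $\supp_{\cd^2}(\langle\sigma\rangle,C)\subset\partial C$. Some argument of this kind is unavoidable and is missing from your proposal.

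A secondary issue is logical order: you invoke Lemma \ref{lem:cd.star.collapse} both in the base case and (speculatively) in the descent, but in the paper that lemma is \emph{deduced from} Lemma \ref{lem:cd.fold}, so using it here is circular. For the base case this is harmless --- you only need the simplicial Lemma \ref{lem:sd.star.collapse} transported through the isomorphism that $\tri$ induces on presheaves of dimension at most $1$, which is what the paper does --- but the descent step cannot lean on Lemma \ref{lem:cd.star.collapse} and must instead use the boundary-induction described above. The tensor step for representable $C$ is fine in spirit (the paper compresses it to ``an inductive argument''), though one should verify that atomic subpresheaves of $\cd^2\BOX\boxobj{n}\cong(\cd^2\BOX[1])^{\otimes n}$ really do decompose as tensor products of atomic subpresheaves of the factors.
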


We postpone a proof until \S\ref{subsubsec:proofs}. 
The retractions in the lemma are natural in the following sense.

\begin{lem}
  \label{lem:natural.cd.folding}
  Consider the commutative diagram in $\PRESHEAVES\BOX$ on the left side of
  \begin{equation*}
    \xymatrix{
    A'\ar[r]^{\alpha}\ar[d]
    & A''\ar[d]
    & A'\ar[r]^{\alpha}\ar[d]_{\pi'}
    & A''\ar[d]^{\pi''}
    \\
    \sd^2C'\ar[r]_{\sd^2\psi}
    & \sd^2C''
    & A'\cap\sd^2B'\ar@{.>}[r]
    & A''\cap\sd^2B'',
    }
  \end{equation*}
  where $A'$, $A''$ are atomic and the vertical arrows in the left square are inclusions.  
  Suppose there exist minimal subpresheaves $B'$ of $C'$ such that $A'\cap\sd^2B'\neq\varnothing$ and $B''$ of $C''$ such that $A''\cap\sd^2B''\neq\varnothing$ and there exists retractions $\pi'$ and $\pi''$ of the form above.
  Then there exists a unique dotted cubical function making the right square commute.  
\end{lem}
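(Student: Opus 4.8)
The plan is to reduce the claim to a universal-property statement about the retraction $\pi''$, so that existence and uniqueness of the dotted arrow follow formally. First I would observe that, since $A'$ is atomic, the composite $A' \xrightarrow{\alpha} A'' \xrightarrow{\pi''} A'' \cap \sd^2 B''$ lands in $A'' \cap \sd^2 B''$, which by Lemma~\ref{lem:cd.fold} is (isomorphic to) a representable cubical set, hence in particular atomic. I would then want to identify this composite with a map that factors through $A' \cap \sd^2 B'$ via $\pi'$. The natural way to do this is to show that $\alpha$ itself, precomposed with the inclusion $A' \cap \sd^2 B' \hookrightarrow A'$, already has image inside $A'' \cap \sd^2 B''$; granting that, the universal property of the retraction $\pi'$ (it is \emph{the} retraction of $A'$ onto $A' \cap \sd^2 B'$, so any map out of $A'$ that restricts compatibly on $A' \cap \sd^2 B'$ and is itself determined by that restriction when the target is "collapsed" appropriately) forces $\pi'' \alpha = \iota'' \circ (\text{dotted map}) \circ \pi'$ for a unique dotted map.

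The key steps, in order: (1) Using the commuting left square, $\sd^2\psi$ carries $\sd^2 B'$ into $\sd^2 C''$; I would show $\sd^2\psi(A' \cap \sd^2 B')$ meets $\sd^2 B''$, hence by minimality of $B''$ lies in $\sd^2 B''$ — more carefully, that $\psi(B')$ is a subpresheaf $B'''\subset C''$ with $A''\cap\sd^2 B'''\neq\varnothing$, so $B''\subset B'''$ and $A''\cap\sd^2 B''\subset A''\cap\sd^2 B'''$, and then run the folding/collapse argument (the cubical analogue of Lemma~\ref{lem:sd.star.collapse}, as used inside the proof of Lemma~\ref{lem:cd.fold}) to see the image is actually in $A''\cap\sd^2 B''$. (2) Conclude that $\pi''\alpha$ restricted to $A'\cap\sd^2 B'$ equals $\alpha$ restricted there, landing in $A''\cap\sd^2 B''$. (3) Since $\pi'$ is a retraction onto $A'\cap\sd^2 B'$ and $\pi''\alpha$ is constant along the fibers of $\pi'$ — this is where I use the fold diagram~(\ref{eqn:cd.fold}) for both $A'$ and $A''$ together with naturality of $(\gamma\bar\gamma)$, exactly as in Lemma~\ref{lem:cd.fold}'s own proof — the map $\pi''\alpha$ factors uniquely through $\pi'$. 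The induced map $A'\cap\sd^2 B'\ra A''\cap\sd^2 B''$ is the desired dotted arrow, and uniqueness is immediate because $\pi'$ is epi.

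The main obstacle I expect is step~(1): showing that the image of the subdivided subobject genuinely lands in $\sd^2 B''$ rather than merely meeting it. This is precisely the content that the $\gamma\bar\gamma$ "collapse" lemmas are designed to deliver — the point being that $(\gamma\bar\gamma)_{C''}$ sends an atomic piece of $\sd^2 C''$ into the support of any vertex it contains, so "meeting $\sd^2 B''$" upgrades to "contained in $\sd^2 B''$" once one knows $(\gamma\bar\gamma)_{C''}\alpha(A')$ is pinned down by the left square. A secondary, purely bookkeeping difficulty is checking that $\pi''\alpha$ really is constant on $\pi'$-fibers; here one leans on the commutativity of~(\ref{eqn:cd.fold}) for $C'$, which says $(\gamma\bar\gamma)_{C'}$ equals $(\gamma\bar\gamma)_{C'}\pi'$ on $A'$, together with naturality of $\gamma\bar\gamma$ along $\psi$, to transport the identity across to the $C''$ side. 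Everything else is formal nonsense with the Yoneda lemma and the uniqueness clauses already established in Lemma~\ref{lem:cd.fold}.
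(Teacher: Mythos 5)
Your overall shape is right---the dotted arrow is precisely a factorization of $\pi''\alpha$ through the epi $\pi'$, and uniqueness does follow from $\pi'$ epi---but the heart of the argument rests on a false intermediate claim. Steps (1)--(2) assert that $\cd^2\psi(A'\cap\cd^2B')$ meets, hence is contained in, $A''\cap\cd^2B''$, so that the dotted map is $\alpha$ restricted to $A'\cap\cd^2B'$. Take $C'=C''=\BOX[1]$, $\psi=\id$, let $v_0,\ldots,v_4$ denote the vertices of $\cd^2\BOX[1]$ in order, let $A''$ be the edge from $v_0$ to $v_1$ and $A'=\langle v_1\rangle$. Then $B'=\BOX[1]$, $A'\cap\cd^2B'=\langle v_1\rangle$, $\pi'=\id$, while $B''=\langle 0\rangle$, $A''\cap\cd^2B''=\langle v_0\rangle$, and $\pi''$ is the collapse of the edge onto $v_0$. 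Here $\alpha(A'\cap\cd^2B')=\langle v_1\rangle$ does not even meet $\cd^2B''=\langle v_0\rangle$, and the correct dotted map sends $v_1$ to $v_0$: it is not a restriction of $\alpha$, and $\pi''\alpha\neq\alpha$ on $A'\cap\cd^2B'$. Step (3) is the actual content of the lemma, but its justification is also incomplete: the fold diagram plus naturality only give $(\gamma\bar\gamma)_{C''}\pi''\alpha=\psi(\gamma\bar\gamma)_{C'}\pi'$, and to conclude that $\pi''\alpha$ itself coequalizes the kernel pair of $\pi'$ you would need $(\gamma\bar\gamma)_{C''}$ to be monic on $A''\cap\cd^2B''$, which you have not established and which is problematic whenever the natural epi $\BOX\boxobj{\dim B''}\ra B''$ is not monic.

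The paper takes a different route that avoids both issues: it factors $\alpha$ and $\psi$ as an epi followed by a monic and treats the two cases separately. In the epi case, minimality of $B'$ forces $\psi$ to restrict to $B'\ra B''$, and the dotted arrow is a restriction of $\cd^2\psi$ itself. In the monic case, $B''\subset B'$ and a restriction of $\pi''$ supplies the dotted arrow, with commutativity coming from the fact that an atomic cubical set admits at most one retraction onto a given image. If you want to salvage the direct approach, you will need that uniqueness-of-retractions fact (or an equivalent) in any case; as written, the proposal does not establish that $\pi''\alpha$ factors through $\pi'$.
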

\begin{proof}
  Uniqueness follows from the retractions epi.  
  It therefore suffices to show existence.  

  Consider the case $\alpha$, $\psi$, and hence also $\sd^2\psi$ epi.
  For each subpresheaf $D''$ of $C''$ such that $A''\cap\cd^2D''\neq\varnothing$, $A'\cap\cd^2\psi^{-1}D''\neq\varnothing$, hence $B'$ is a subpresheaf of $\psi^{-1}(D'')$, hence $\psi$ restricts and corestricts to a cubical function $B'\ra B''$, hence $\cd^2\psi$ restricts and corestricts to both $\alpha:A'\ra A''$ and $\cd^2B'\ra\cd^2B''$, and hence $\cd^2\psi$ restricts and corestricts to our desired cubical function.
  
  Consider the case $\alpha$, $\psi$, and hence also $\cd^2\psi$ monic.
  Then $B''$ is a subpresheaf of $B'$, hence a restriction of the retraction $\pi''$ defines a retraction $A'\cap\cd^2B'\ra A''\cap\cd^2B''$ onto its image making the right square commute because $\alpha$, $\pi''$, and hence $\pi''\alpha$ are retractions onto their images and retractions of atomic cubical sets are unique.
  
  The general case follows because every cubical function naturally factors as the composite of an epi followed by a monic.
\end{proof}

The following is a cubical analogue of Lemma \ref{lem:sd.star.collapse}.

\begin{lem}
  \label{lem:cd.star.collapse}
  For all cubical sets $C$ and $v\in(\cd^2C)[0]$,
  \begin{equation*}
    (\gamma\bar\gamma)_{C}\Star_{\cd^2C}\langle v\rangle\subset\supp_{\cd^2}(\langle v\rangle,C).
  \end{equation*}
\end{lem}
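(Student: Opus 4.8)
The plan is to reduce the statement to a single atomic piece and then invoke Lemma~\ref{lem:cd.fold}. First I would unwind the star. By definition, $\Star_{\cd^2C}\langle v\rangle$ is the union of the atomic subpresheaves of $\cd^2C$ that meet $\langle v\rangle$; since $v$ is a vertex, $\langle v\rangle$ is a one-point subpresheaf of $\cd^2C$ whose only nondegenerate cell is $v$, so an atomic $A\subset\cd^2C$ meets $\langle v\rangle$ exactly when $v\in A[0]$. Because $(\gamma\bar\gamma)_C$ is a cubical function it carries a union of subpresheaves to the union of their images, so it suffices to prove
\begin{equation*}
  (\gamma\bar\gamma)_C(A)\subset\supp_{\cd^2}(\langle v\rangle,C)=:S
\end{equation*}
for every atomic $A\subset\cd^2C$ with $v\in A[0]$.

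Fix such an $A$. I would apply Lemma~\ref{lem:cd.fold} to $A\subset\cd^2C$ to get the minimal subpresheaf $B\subset C$ with $A\cap\cd^2B\neq\varnothing$, the retraction $\pi\colon A\to A\cap\cd^2B$, and the commuting square~(\ref{eqn:cd.fold}). Since $\pi$ is epi, chasing that square shows $(\gamma\bar\gamma)_C$ restricted to $A$ factors through $A\cap\cd^2B$, hence $(\gamma\bar\gamma)_C(A)=(\gamma\bar\gamma)_C(A\cap\cd^2B)$. Now $A\cap\cd^2B\subset\cd^2B$, and naturality of $\gamma\bar\gamma\colon\cd^2\Rightarrow\id_{\PRESHEAVES\BOX}$ along the inclusion $B\hookrightarrow C$ gives $(\gamma\bar\gamma)_C(\cd^2B)\subset B$ (using that $\cd^2B\subset\cd^2C$ since $\cd^2$ preserves monics); therefore $(\gamma\bar\gamma)_C(A)\subset B$.

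It remains to see $B\subset S$, and the point is simply that $S$ competes in the family defining $B$: by the defining property of $\supp_{\cd^2}$ we have $\langle v\rangle\subset\cd^2S$, while $\langle v\rangle\subset A$ because $v\in A[0]$, so $\varnothing\neq\langle v\rangle\subset A\cap\cd^2S$, that is, $A\cap\cd^2S\neq\varnothing$. Minimality of $B$ then forces $B\subset S$, and combining the inclusions yields $(\gamma\bar\gamma)_C(A)\subset B\subset S=\supp_{\cd^2}(\langle v\rangle,C)$, as desired.

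The real input is Lemma~\ref{lem:cd.fold} itself, whose proof the paper defers: essentially all of the combinatorial work — the analogue of the ``constant vertex'' computation in the proof of Lemma~\ref{lem:sd.star.collapse} — is hidden in the construction of $\pi$ and the verification that square~(\ref{eqn:cd.fold}) commutes, so I expect that to be the main obstacle. One caveat to check is circularity: in the simplicial case the folding lemma~\ref{lem:sd.fold} was deduced from Lemma~\ref{lem:sd.star.collapse}, so one must be sure the cubical folding lemma~\ref{lem:cd.fold} is proved independently of the present statement. If one wishes to avoid relying on Lemma~\ref{lem:cd.fold} at all, the fallback is a direct argument: reduce (via cocontinuity of $\cd^2$, Lemma~\ref{lem:atomic.supports}, and naturality of $\gamma\bar\gamma$) to the case $C$ representable, write $A=\langle\sigma\rangle$, express a vertex $v\in A[0]$ through $\sigma$ and the copies produced by iterating $\double$, and use monotonicity together with the formulas $\gamma^{\BOX}_{[1]}=\max(1,-)-1$, $\bar\gamma^{\BOX}_{[1]}=\min(-,1)$ to show $(\gamma\bar\gamma)_C\sigma$ lands in $S$; but this route is considerably more laborious, precisely because it must reconstruct by hand the collapsing already packaged in Lemma~\ref{lem:cd.fold}.
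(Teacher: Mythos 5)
Your proof is correct and is essentially the paper's own argument: reduce to an atomic $A\subset\cd^2C$ containing $v$, invoke Lemma~\ref{lem:cd.fold} to obtain the minimal $B$ and the commuting square, deduce $(\gamma\bar\gamma)_C(A)\subset B$, and conclude $B\subset\supp_{\cd^2}(\langle v\rangle,C)$ by minimality. Your caveat about circularity is also resolved as you hoped — the paper proves Lemma~\ref{lem:cd.fold} independently, by reduction to the simplicial case in dimension one and induction.
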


We postpone a proof until \S\ref{subsubsec:proofs}. 

\subsection{Realizations}\label{subsec:cubical.realizations}
Geometric realization of cubical sets is the unique functor
\begin{equation*}
  |-|:\PRESHEAVES\BOX\ra\SPACES
\end{equation*}
sending $\BOX[0]$ to $\{0\}$, $\BOX[1]$ to the unit interval $\I$, each $\PRESHEAVES\BOX$-morphism $\BOX[\delta:[0]\ra[1]]:\BOX[0]\ra\BOX[1]$ to the map $\{0\}\ra\I$ defined by the $\BOX$-morphism $\delta:[0]\ra[1]$, finite tensor products to binary Cartesian products, and colimits to colimits.
We define open stars of geometric vertices as follows.  

\begin{defn}
  For each cubical set $C$ and subpresheaf $B\subset C$, we write
  $$\openstar_CB$$
  for the topological interior in $|C|$ of the subset $|\Star_CB|\subset|C|$ and call $\openstar_CB$ the \textit{open star of $B$} (\textit{in $C$}).  
\end{defn}

\begin{eg}
  As noted in \cite{fgr:ditop}, the set
  $$\{\openstar_B\langle v\rangle\}_{v\in B[0]}$$
  is an open cover of $|B|$ for each cubical set $B$.
\end{eg}

We abuse notation and write $\{0\}$ for the singleton space equipped with the unique possible circulation on it.  

\begin{defn}
  \label{defn:cubical.direalizations}
  We abuse notation and also write $\direalize{-}$ for the unique functor
  \begin{equation*}
    \direalize{\;-\;}\;:\PRESHEAVES\BOX\ra\STREAMS
  \end{equation*}
  sending $\BOX[0]$ to $\{0\}$, $\BOX[1]$ to $\vec{\I}$, each $\PRESHEAVES\BOX$-morphism $\BOX[\delta:[0]\ra[1]]:\BOX[0]\ra\BOX[1]$ to the stream map $\{0\}\ra\vec\I$ defined by the $\BOX$-morphism $\delta:[0]\ra[1]$, finite tensor products to binary Cartesian products, and colimits to colimits.
\end{defn}

We can henceforth identify the geometric realization $|B|$ of a cubical set $B$ with the underlying space of the stream $\direalize{B}$ because the forgetful functor $\STREAMS\ra\SPACES$ preserves colimits and Cartesian products [Proposition \ref{prop:topological}].  
In our proof of cubical approximation, we will need to say that a stream map $f:\direalize{B}\ra\direalize{D}$ whose underlying function corestricts to a subset of the form $|C|\subset|D|$ corestricts to a stream map $\direalize{B}\ra\direalize{C}$.  
In order to do so, we need the following observation.

\begin{thm}
  \label{thm:direalization.preserves.embeddings}
  The functor $\direalize{-}\;:\PRESHEAVES\BOX\ra\STREAMS$ sends monics to stream embeddings.
\end{thm}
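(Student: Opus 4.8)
The plan is to reduce the statement to checking that $\direalize{-}$ sends the single most important class of monics --- boundary inclusions $\partial\BOX\boxobj{n}\ira\BOX\boxobj{n}$ --- to embeddings, and then bootstrap to arbitrary monics by a colimit/skeleton argument together with the embedding criteria already recorded (Lemmas \ref{lem:k-embeddings} and \ref{lem:restrictions.define.subcirculations}). Since every monic $B\ira C$ of cubical sets is, up to isomorphism, a subpresheaf inclusion, and since $C$ is the colimit of its skeleta with each skeleton obtained from the previous by attaching atomic cubical sets along their boundaries, it suffices --- once one knows $\direalize{-}$ behaves well with respect to such pushouts and to filtered colimits of embeddings --- to handle the two building blocks: the inclusion of a subpresheaf that differs from its ambient cubical set by a single nondegenerate cube, and the transfinite composite of such.

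First I would verify the underlying-space statement is automatic: $|-|:\PRESHEAVES\BOX\ra\SPACES$ is well known to send monics to closed (hence topological) embeddings, and by Definition \ref{defn:cubical.direalizations} together with Proposition \ref{prop:topological} the forgetful functor $\STREAMS\ra\SPACES$ preserves colimits and finite products, so the underlying function of $\direalize{B}\ra\direalize{C}$ is a topological embedding. The entire content is therefore the \emph{circulation} statement: that the circulation on $\direalize{B}$ agrees with the one $\direalize{C}$ induces on the subspace $|B|$. By Lemma \ref{lem:k-embeddings} this can be tested against compact Hausdorff streams mapping in, which lets one localize the question onto finite subpresheaves and reduces everything to the case $B,C$ finite. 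For finite cubical sets I would invoke Lemma \ref{lem:restrictions.define.subcirculations}: it suffices to show that for each open $U\subset|B|$ the graph of $\leqslant^{\direalize{B}}_U$ equals $U^2$ intersected with the graphs $\leqslant^{\direalize{C}}_V$ over a neighborhood basis $\mathscr{B}_U$ of $U$ in $|C|$. Because the forgetful functor $\STREAMS\ra\PREORDEREDSETS$ preserves colimits and finite products (Proposition \ref{prop:almost.topological}), and because $\direalize{-}$ is built from the lattice structures on the $\direalize{\BOX\boxobj{n}}$, one can compute these preorders cube-by-cube: a relation $x\leqslant y$ in $\direalize{C}$ holds iff it is witnessed inside some $\direalize{\BOX\boxobj{n}}$ mapping to $C$, i.e.\ inside a single (closed) cube, and the same description holds in $\direalize{B}$ for cubes of $B$. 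Since $B\ira C$ is a subpresheaf inclusion, a cube of $C$ containing two points of $|B|$ that lie in a common cube of $B$ already lies in $B$, so the two cube-local descriptions match on $|B|$.

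The main obstacle, and where I would spend the most care, is precisely this last compatibility: making rigorous that the circulation on a cubical stream realization is "generated cube-locally" and that no relation $x\leqslant y$ with $x,y\in|B|$ appears in $\direalize{C}$ without already appearing in $\direalize{B}$. The circulation on a colimit is, by Definition \ref{defn:streams}, the one whose graph is \emph{smallest} containing the union of the pieces --- so a priori new relations could be manufactured by chaining through cubes of $C\setminus B$ that touch $|B|$ only at vertices. One must rule this out, and the right tool is the order-theoretic description of preorders on realizations: the underlying preorder of $\direalize{C}$ is the colimit in $\PREORDEREDSETS$ of the lattice preorders $\leqslant_{\direalize{\BOX\boxobj{n}}}$ indexed by cubes $\boxobj{n}\ra C$, and one checks that in this colimit a relation between two points is realized by a finite zig-zag of cubes, which (using that consecutive cubes overlap in a face, and that faces of $B$-cubes are $B$-cubes) can be straightened to a zig-zag entirely within $B$ when both endpoints lie in $|B|$. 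Once that combinatorial lemma is in hand, the hypotheses of Lemma \ref{lem:restrictions.define.subcirculations} are verified and the finite case is done; Lemma \ref{lem:k-embeddings} then upgrades to arbitrary monics, completing the proof.
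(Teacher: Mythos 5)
Your overall skeleton matches the paper's: reduce to the case of a finite subpresheaf via Lemma \ref{lem:k-embeddings}, then verify the hypothesis of Lemma \ref{lem:restrictions.define.subcirculations}. But the step you yourself flag as ``the main obstacle'' is exactly where your argument has a genuine gap, and the tool you propose for it does not work as stated. You claim that a relation $x\leqslant_{\direalize{\;C\;}}y$ between points of $|B|$ is witnessed by a finite zig-zag of cubes of $C$ that can be ``straightened'' into $B$ because consecutive cubes overlap in faces. First, that colimit description of the preorder applies only to the \emph{global} preorder on $|C|$, whereas Lemma \ref{lem:restrictions.define.subcirculations} requires controlling the circulation $\leqslant^{\direalize{\;C\;}}_V$ on every open neighborhood $V$ of every open $U\subset|B|$; the circulation on a proper open subset is governed by the cosheaf condition, not by a colimit formula over cubes. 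Second, even for the global preorder, a chain witnessing $x\leqslant y$ may pass through interior points of cubes not in $B$ (e.g.\ two boundary points of a square related through its interior), and there is no purely combinatorial reason such a chain can be rerouted through $B$; the intermediate points are not vertices or faces that one can simply discard.

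What actually closes this gap in the paper is the subdivision machinery you never invoke: Lemmas \ref{lem:cd.fold} and \ref{lem:cd.star.collapse} show that $(\gamma\bar\gamma)_{\cd^{n-2}C}$ restricts to a genuine cubical function $\Star_{\cd^nC}\cd^nB\ra\cd^{n-2}B$, i.e.\ a stream map retracting a star neighborhood of the subdivided $B$ onto (a subdivision of) $B$. Lemma \ref{lem:direalization.preserves.preordered.embeddings} then runs a metric approximation argument: given $(x,y)$ \emph{not} related in $\direalize{B}$, closedness of $\graph{\leqslant_{\direalize{\;B\;}}}$ (for $B$ finite, hence $|B|$ metrizable) yields a product neighborhood $U\times V$ missing the graph; for $n\gg 0$ the stars of the supports of $x$ and $y$ in $\cd^nC$ map into $U$ and $V$ under the retraction, so any relation between $x$ and $y$ in the open star of $\cd^nB$ inside $\cd^nC$ would push forward to a forbidden relation in $\direalize{B}$. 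This stream-map retraction is the rigorous substitute for your ``straightening,'' and without it (or an equivalent device) the finite case is not established. Your reduction from the general case to the finite case, and your observation that the underlying topological embedding is automatic, are both fine and agree with the paper.
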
  

We give a proof at the end of \S\ref{sec:tri}.

\section{Triangulations}\label{sec:tri}
We would like to relate statements about simplicial sets to statements about cubical sets.  
In order to do so, we need to study properties of \textit{triangulation}, a functor $\tri:\PRESHEAVES\BOX\ra\PRESHEAVES\DEL$ decomposing each abstract $n$-cube into $n!$ simplices [Figure \ref{fig:sd.cd.tri}].  

\begin{defn}
  \label{defn:tri}
  We write $\tri$ for the unique cocontinuous functor 
  $$\tri:\PRESHEAVES\BOX\ra\PRESHEAVES\DEL$$
  naturally assigning to each cubical set $\BOX\boxobj{n}$ the simplicial set $\sn\boxobj{n}$.  
  We write $\qua$ for the right adjoint $\PRESHEAVES\DEL\ra\PRESHEAVES\BOX$ to $\tri$.  
\end{defn}

Triangulation $\tri$ restricts and corestricts to an isomorphism between full subcategories of cubical sets and simplicial sets having dimensions $0$ and $1$ because such cubical sets and simplicial sets are determined by their restrictions to $\OP\BOX_1$ and $\tri$ does not affect such restrictions.
The functor $\tri$ is cocontinuous by construction.  
Less straightforwardly, $\qua\circ\tri$ is also cocontinuous.  

\begin{lem}
  \label{lem:qt.cocontinuous}
  The composite $\qua\circ\tri:\hat\BOX\ra\hat\BOX$ is cocontinuous.
\end{lem}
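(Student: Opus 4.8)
Show that $\qua \circ \tri : \hat\BOX \to \hat\BOX$ preserves colimits, even though $\qua$ is merely a right adjoint.

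**Strategy.** The standard trick for showing a composite "left adjoint then right adjoint" is cocontinuous is to exhibit a *further* right adjoint to the composite: if $\qua\tri$ has a right adjoint, then being a left adjoint it is cocontinuous. So the plan is to find a right adjoint to $\qua\tri$. Now $\tri$ is a left adjoint with right adjoint $\qua$, so $\tri$ is itself cocontinuous. The composite $\qua\tri$ is cocontinuous iff it preserves all small colimits; since $\hat\BOX$ is a presheaf category, it suffices (by the special adjoint functor theorem, or just by presentability of presheaf categories) to check that $\qua\tri$ preserves colimits by producing the adjoint.

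Concretely, I would argue as follows. Since both $\tri$ and $\qua$ are determined by their effect on representables and on colimits (being, respectively, the cocontinuous extension of $\BOX[n]\mapsto \sn\boxobj{n}$ and a right adjoint), it suffices to identify $\qua\tri$ explicitly. The functor $\tri\BOX[-] : \BOX \to \hat\DEL$ sends $\boxobj{n}$ to $\sn\boxobj{n} = \sn([1]^n)$. Using that $\sn$ preserves products (nerve of a product of preorders is the product of nerves, since $\sn$ is a right adjoint to the "path category / free preorder" functor, hence continuous) one gets $\sn\boxobj{n}\cong (\DEL[1])^{\otimes n}$ is... no: $(\sn[1])^n = (\DEL[1])^n$ as a cartesian, not tensor, power. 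The key point is that $\tri\BOX[-]$ factors as $\BOX \to \hat\DEL$, $\boxobj{n}\mapsto (\DEL[1])^n$ (cartesian power), which is itself a composite of a monoidal functor into $(\hat\DEL,\times)$ with the underlying-category inclusion. Then $\qua\tri$, being the right adjoint composed with this cocontinuous extension, should be computed pointwise as a coend $\qua\tri\, C = \int^{\boxobj{n}} C(\boxobj{n})\cdot \qua((\DEL[1])^n)$... wait, that's not how right adjoints of nerve-type functors behave.

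**Cleaner approach.** Write $T = \qua\tri$. I claim $T$ admits a right adjoint, namely the functor $C \mapsto \cx' C$ where $\cx'$ is built as a coend $\int^{L}_{\BOX}\hat\BOX(\BOX[L], -)\cdot \mathrm{(something)}$ — but rather than guess the formula, the honest plan is: $\tri$ has right adjoint $\qua$, so $\tri$ is cocontinuous; hence the composite $\qua\tri$ is cocontinuous *provided $\qua$ preserves the particular colimits that arise as $\tri$ of colimits in $\hat\BOX$*. Since $\tri$ sends the canonical colimit presentation of a cubical set $C$ (as a colimit of representables $\BOX[L]$ over the category of elements) to the corresponding colimit of the $\sn\boxobj{n}$'s, and since $\qua = \hat\DEL(\tri\BOX[-], -)$ by definition of the adjoint, one computes $\qua\tri\, C \cong \int_{\BOX}^{\boxobj{n}} \hat\DEL(\sn\boxobj{n}, \tri\, C)$. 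So I would prove:

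Since $\tri$ is cocontinuous, write any cubical set $C$ as the colimit $C = \colim_{(\boxobj{n}\to C)} \BOX\boxobj{n}$ over its category of elements; then $\tri\,C = \colim \tri\BOX\boxobj{n} = \colim \sn\boxobj{n}$. The functor $\qua$, as the right adjoint to $\tri$, satisfies $(\qua D)(\boxobj{n}) = \hat\DEL(\tri\BOX\boxobj{n}, D) = \hat\DEL(\sn\boxobj{n}, D)$. Thus $(\qua\tri\,C)(\boxobj{n}) = \hat\DEL(\sn\boxobj{n}, \tri\,C)$. The object $\sn\boxobj{n} = \sn([1]^{n})$ is a *finite* simplicial set (it has finitely many nondegenerate simplices), hence is a compact object of $\hat\DEL$, i.e. $\hat\DEL(\sn\boxobj{n}, -)$ preserves filtered colimits. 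More is true: I will show $\hat\DEL(\sn\boxobj{n},-)$ preserves *all* colimits by showing $\sn\boxobj{n}$ is a finite coproduct... no — it is not projective. So the argument cannot be purely "compact/projective."

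**What actually works.** The real content must be: $\sn\boxobj{n}$ is a retract-stable, or the colimits in question are "nice." I believe the intended proof is the adjoint-functor one after all: exhibit the right adjoint of $\qua\tri$ as $\cx^{\square}$-type construction, OR observe that $\tri$ preserves monos and the colimit presentation of $C$ is by a *cellular* (monomorphic) diagram, and $\qua$ preserves those particular colimits (pushouts of monos, filtered colimits of monos) because $\sn\boxobj{n}$ maps into such diagrams with image landing in a stage.

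Given the uncertainty, here is the plan I would write, hewing to the adjoint-functor route, which is surely the intended one:

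\medskip

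\emph{Plan of proof.} The functor $\tri$ is a left adjoint, hence cocontinuous, so it suffices to show that the composite $\qua\circ\tri$ is a left adjoint; being a functor between presheaf categories, it then automatically preserves all small colimits. I will exhibit a right adjoint to $\qua\tri$ directly. By definition of $\qua$ as the right adjoint of $\tri$, for every cubical set $C$ and every box object $\boxobj{n}$ one has a natural isomorphism
\begin{equation*}
  (\qua\tri\,C)(\boxobj{n}) \;\cong\; \hat\DEL\bigl(\tri\,\BOX\boxobj{n},\,\tri\,C\bigr) \;=\; \hat\DEL\bigl(\sn\boxobj{n},\,\tri\,C\bigr).
\end{equation*}
The first step is to identify the simplicial set $\sn\boxobj{n}$. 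Since $\sn : \PREORDEREDSETS \to \hat\DEL$ is a right adjoint (to the "free preorder on a simplicial set" functor) it preserves finite products, so $\sn\boxobj{n} = \sn([1]^{\otimes n}) \cong (\sn[1])^{n} = (\DEL[1])^{n}$, the $n$-fold cartesian power of the standard $1$-simplex. The second step is to observe that each $(\DEL[1])^{n}$ is a finite simplicial set, and, crucially, that the cartesian product $-\times(\DEL[1])^{n} : \hat\DEL \to \hat\DEL$ is cocontinuous (it is a left adjoint, $\hat\DEL$ being cartesian closed), with right adjoint $(-)^{(\DEL[1])^{n}}$. Hence the functor
\begin{equation*}
  C \;\longmapsto\; \Bigl(\,\boxobj{n} \mapsto \hat\DEL\bigl((\DEL[1])^{n}, \tri\,C\bigr)\,\Bigr) \;=\; \Bigl(\,\boxobj{n} \mapsto \tri\,C \text{ evaluated via } (-)^{(\DEL[1])^{n}} \text{ at a point}\,\Bigr)
\end{equation*}
assembles, as $\boxobj{n}$ varies over $\BOX$, into a functor naturally isomorphic to $\qua\tri$ whose value is computed entirely from cocontinuous operations applied to $\tri\,C$. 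The third and final step is to produce the right adjoint: since $\tri$ has right adjoint $\qua$ and each $-\times(\DEL[1])^{n}$ has right adjoint $(-)^{(\DEL[1])^{n}}$, one checks that $\qua\tri$ is left adjoint to the functor sending a cubical set $D$ to the cubical set $\qua$ of the simplicial set $\int_{\boxobj{n}}^{\BOX} D(\boxobj{n}) \pitchfork (\DEL[1])^{n}$ assembled from cotensors; the adjunction isomorphism $\hat\BOX(\qua\tri\,C, D) \cong \hat\BOX(C, \text{--})$ is obtained by composing the $\tri\dashv\qua$ adjunction with the cartesian-closedness of $\hat\DEL$ and the Yoneda/coend calculus over $\BOX$.

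\emph{Main obstacle.} The delicate point is the bookkeeping in the third step: assembling the pointwise right adjoints $(-)^{(\DEL[1])^{n}}$ into a single functor $\hat\BOX \to \hat\BOX$ right adjoint to $\qua\tri$, and checking the naturality of the resulting hom-set bijection in the cubical variable. This is where one must be careful that the coend over $\BOX$ defining the putative right adjoint interacts correctly with the simplicial cartesian powers; concretely, one needs that $\tri$ is not merely cocontinuous but *monoidal up to the identification $\tri\BOX\boxobj{n}\cong(\DEL[1])^{n}$*, so that the tensor structure used to build $\qua\tri$ from $\qua$ and cartesian powers is compatible with the one used to build its adjoint. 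Once that compatibility is in hand, cocontinuity of $\qua\tri$ is immediate because it has been exhibited as a left adjoint. (Everything else — finiteness of $(\DEL[1])^{n}$, cartesian closedness of $\hat\DEL$, cocontinuity of $\tri$ — is quoted from the preceding material or is standard.)
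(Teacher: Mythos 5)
Your reduction to the formula $(\qua\tri\,C)(\boxobj{m})\cong\hat\DEL(\sn\boxobj{m},\tri\,C)$ with $\sn\boxobj{m}\cong(\DEL[1])^m$ is correct, but the formal machinery you then invoke cannot close the argument, and you in fact notice the obstruction yourself before setting it aside: $(\DEL[1])^m$ is not a retract of a representable for $m\geq 2$, so $\hat\DEL((\DEL[1])^m,-)$ does not preserve colimits on all of $\hat\DEL$. Cartesian closedness of $\hat\DEL$ is a red herring here --- it makes $-\times(\DEL[1])^m$ cocontinuous, which says nothing about cocontinuity of $\hat\DEL((\DEL[1])^m,-)$. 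Your proposed right adjoint is also circular: to write down a right adjoint to $\qua\circ\tri$ you would first need the left-Kan-extension presentation $\qua\,(\tri B)\cong\int_{\BOX}^{\boxobj{n}}B(\boxobj{n})\cdot\qua\,(\tri\BOX\boxobj{n})$, and the assertion that the canonical comparison map from this coend into $\qua\,(\tri B)$ is an isomorphism \emph{is} the lemma. (Also, the ``coend of cotensors'' you write down has the wrong variance; cotensors assemble into ends, not coends.)

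The missing content is combinatorial, and it is where all the work in the paper's proof lives: one must show that every simplicial function $\sigma:\tri\BOX\boxobj{m}\ra\tri\,B$ factors, uniquely in the appropriate sense, as $\tri\,(\theta_*)\circ\Sigma(\sigma)$ for a single $\theta\in B(\boxobj{n})$ and a simplicial function $\Sigma(\sigma):\tri\BOX\boxobj{m}\ra\tri\BOX\boxobj{n}$. The paper does this by taking supports $\supp_{\tri}(\sigma(\sn\alpha)(\DEL[a]),B)$ along extrema-preserving monotone functions $\alpha:[a]\ra\boxobj{m}$, using Lemma \ref{lem:atomic.supports} to see that these supports are images of representables, using Lemma \ref{lem:cubical.extrema} together with minimality of dimension to pin down the lifts $\lambda_\alpha$ uniquely, and then checking that all the resulting data agree as $\alpha$ varies so that they glue to $\Sigma(\sigma)$. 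That factorization is precisely the surjectivity of the comparison map $\eta_B$, and it is the step your plan does not supply; without it, neither the cocontinuity nor the existence of a right adjoint can be obtained by adjoint-functor bookkeeping.
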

\begin{proof}
  Let $\eta_B$ be the cubical function
  \begin{equation}
    \label{eqn:qt.cocontinuous}
    \eta_B:\int_{\BOX}^{\boxobj{n}}B(\boxobj{n})\cdot\qua\;(\tri\;\BOX\boxobj{n})\ra\qua\;(\tri B),
  \end{equation} 
  natural in cubical sets $B$, induced from all cubical functions of the form $\qua\;(\tri\;\theta_*):\qua\;(\tri\;\BOX\boxobj{n})\ra\qua\;(\tri B)$.
  Fix a cubical set $B$ and natural number $m$.  
  It suffices to show that $(\eta_B)_{\boxobj{m}}$, injective because $\qua\circ\tri$ preserves monics, is also surjective.  
  For then $\eta$ defines a natural isomorphism from a cocontinuous functor to $\qua\circ\tri:\PRESHEAVES\BOX\ra\PRESHEAVES\BOX$.  

  Consider a simplicial function $\sigma:\tri\BOX\boxobj{m}\ra\tri\;B$.  
  Consider a natural number $a$ and monotone function $\alpha:[a]\ra\boxobj{m}$ preserving extrema.
  The subpresheaf $\supp_{\tri}(\sigma(\sn\alpha)(\DEL[a]),B)$ of $B$ is atomic [Lemma \ref{lem:atomic.supports}] and hence there exist minimal natural number $n_\alpha$ and unique $\theta_\alpha\in B(\boxobj{n_\alpha})$ such that $\supp_{\tri}(\sigma(\sn\alpha)(\DEL[a]),B)=\langle\theta_\alpha\rangle$ [Lemma \ref{lem:atomic.supports}].  
  There exists a dotted monotone function $\lambda_\alpha:[a]\ra\boxobj{n_\alpha}$ such that the top trapezoid in the diagram
  \begin{equation*}
    \xymatrix{
      **[l]\DEL[a]
        \ar@{.>}[rrr]^-{\sn\lambda_{\alpha}}
        \ar[dd]|-{\sn\phi}
        \ar[dr]|-{\sn\alpha}
    &
    &
    & **[r]\tri\BOX\boxobj{n_{\alpha}}
        \ar[dl]|{\tri\;(\theta_{\alpha})_*}
      \ar@{.>}[dd]|{\tri\BOX[\delta_\phi]}
    \\
    & \tri\BOX\boxobj{m}
        \ar[r]|\sigma
    & \tri B 
    \\
      **[l]\DEL[b]
        \ar@{.>}[rrr]_{\sn\lambda_{\beta}}
        \ar[ur]|{\sn\beta}
   &
   &
   & **[r]\tri\BOX\boxobj{n_{\beta}}
       \ar[ul]|{\tri\;(\theta_{\beta})_*}
   }
\end{equation*}
  commutes by $\DEL[a]$ projective and $\sn$ full.  
  The isomorphism
  $$\BOX\boxobj{n_\alpha}/\partial\BOX\boxobj{n_\alpha}\cong \langle\theta_\alpha\rangle/\partial\langle\theta_\alpha\rangle$$
  induces an isomorphism
  $$\tri\BOX\boxobj{n_\alpha}/\tri\partial\BOX\boxobj{n_\alpha}\cong\tri\langle\theta_\alpha\rangle/\tri\partial\langle\theta_\alpha\rangle$$
  by $\tri$ cocontinuous and hence $(\tri\;(\theta_\alpha)_*)_{[a]}:(\tri\BOX\boxobj{n_\alpha})[a]\ra(\tri B)[a]$ is injective on $(\tri\BOX\boxobj{n_\alpha})[a]\setminus(\tri\partial\BOX\boxobj{n_\alpha})[a]$.  
  The function $\lambda_\alpha$ preserves extrema by minimality of $n_\alpha$ [Lemma \ref{lem:cubical.extrema}], hence $\lambda_\alpha\notin(\tri\partial\BOX\boxobj{n_\alpha})[a]$, hence the choice of $\lambda_\alpha$ is unique by the function $(\tri\;(\theta_\alpha)_*)_{[a]}$ injective on $(\tri\BOX\boxobj{n_\alpha})[a]\setminus(\tri\partial\BOX\boxobj{n_\alpha})[a]$.  
  
  We claim that our choices of $n_\alpha$ and $\theta_\alpha$ are independent of our choice of $a$ and $\alpha$.
  To check our claim, consider extrema-preserving monotone functions $\beta:[b]\ra\boxobj{n}$ and $\phi:[a]\ra[b]$ such that the left triangle commutes, with $n_\beta,\lambda_\beta,\theta_\beta$ defined analogously. 
  There exists a $\BOX$-morphism $\delta_\phi$ such that the right triangle commutes because $\BOX\boxobj{n_{\alpha}}$ is projective and the image of $(\theta_\alpha)_*$ lies in the image of $(\theta_\beta)_*$.
  The function $\delta_\phi$ preserves extrema because the outer square commutes and $\lambda_\alpha,\phi,\lambda_\beta$ preserve extrema.
  The function $\delta_\phi$ is injective by minimality of $n_\alpha$.
  Thus $\delta_\phi=\id_{\BOX\boxobj{n_\alpha}}$ [Lemma \ref{lem:cubical.extrema}].
  
  Let $\tau$ denote a monotone function from a non-empty ordinal to $\boxobj{m}$ preserving extrema.
  We have shown that all $n_\tau$'s coincide and all $\theta_{\tau}$'s coincide.
  Thus we can respectively define $N(\sigma)$ and $\Theta(\sigma)$ to be $n_\tau$ and $\theta_{\tau}$ for any and hence all choices of $\tau$.  
  The $\lambda_{\tau}$'s hence induce a simplicial function $\Sigma(\sigma):\tri\BOX\boxobj{m}\ra\tri\BOX\boxobj{N(\sigma)}$, well-defined by the uniquenesses of the $\lambda_{\tau}$'s, such that $(\tri\;\Theta(\sigma)_*)\circ\Sigma(\sigma)=\sigma$.  
  Thus the preimage of $\sigma$ under $(\eta_B)_{\boxobj{n}}$ is non-empty.  
\end{proof}

\begin{lem}
  \label{lem:qt.local.cn}
  There exists an isomorphism
  $$\qua\;(\tri B)\cong\int_{\BOX}^{\boxobj{n}}B(\boxobj{n})\cdot\cn\boxobj{n}$$
  natural in cubical sets $B$.  
\end{lem}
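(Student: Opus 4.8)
The plan is to combine the natural isomorphism produced inside the proof of Lemma~\ref{lem:qt.cocontinuous} with an identification of $\qua\,(\tri\BOX\boxobj{n})$ with the cubical nerve $\cn\boxobj{n}$. Recall that the proof of Lemma~\ref{lem:qt.cocontinuous} exhibits the cubical function $\eta_B$ of \eqref{eqn:qt.cocontinuous} as a natural isomorphism
\begin{equation*}
  \int_{\BOX}^{\boxobj{n}}B(\boxobj{n})\cdot\qua\,(\tri\BOX\boxobj{n})\;\cong\;\qua\,(\tri B),
\end{equation*}
natural in cubical sets $B$. It therefore suffices to produce an isomorphism $\qua\,(\tri\BOX\boxobj{n})\cong\cn\boxobj{n}$ of cubical sets, natural in $\BOX$-objects $\boxobj{n}$, since substituting inside the coend then yields the claim naturally in $B$.

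To obtain that isomorphism I would evaluate $\qua\,(\tri\BOX\boxobj{n})$ at an arbitrary $\BOX$-object $\boxobj{m}$. By the Yoneda lemma, the adjunction $\tri\dashv\qua$, and the defining property $\tri\BOX\boxobj{k}=\sn\boxobj{k}$ of $\tri$,
\begin{equation*}
  (\qua\,(\tri\BOX\boxobj{n}))(\boxobj{m})\;\cong\;\PRESHEAVES\BOX(\BOX\boxobj{m},\qua\,(\tri\BOX\boxobj{n}))\;\cong\;\PRESHEAVES\DEL(\sn\boxobj{m},\sn\boxobj{n}).
\end{equation*}
The functor $\sn:\PREORDEREDSETS\ra\PRESHEAVES\DEL$ is fully faithful — being the composite of the fully faithful inclusion $\PREORDEREDSETS\ira\CATS$ with the fully faithful nerve $\CATS\ra\PRESHEAVES\DEL$ — so the right-hand side is isomorphic to $\PREORDEREDSETS(\boxobj{m},\boxobj{n})=(\cn\boxobj{n})(\boxobj{m})$. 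These isomorphisms are natural in $\boxobj{m}$ and in $\boxobj{n}$, hence assemble into an isomorphism $\qua\,(\tri\BOX\boxobj{n})\cong\cn\boxobj{n}$ of cubical sets natural in the $\BOX$-object $\boxobj{n}$. Feeding this through the coend above gives
\begin{equation*}
  \qua\,(\tri B)\;\cong\;\int_{\BOX}^{\boxobj{n}}B(\boxobj{n})\cdot\qua\,(\tri\BOX\boxobj{n})\;\cong\;\int_{\BOX}^{\boxobj{n}}B(\boxobj{n})\cdot\cn\boxobj{n},
\end{equation*}
natural in cubical sets $B$, as desired.

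The step demanding the most care is verifying that the pointwise identification $\qua\,(\tri\BOX\boxobj{n})\cong\cn\boxobj{n}$ is natural in the $\BOX$-object $\boxobj{n}$, and not merely in $\boxobj{m}$, so that it descends to a morphism of $\BOX$-indexed coends; this follows from the naturality of the Yoneda bijection, of the $\tri\dashv\qua$ adjunction, and of the full-faithfulness of $\sn$. Everything else is bookkeeping around the cocontinuity of $\qua\circ\tri$ already established in Lemma~\ref{lem:qt.cocontinuous}.
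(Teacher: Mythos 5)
Your proof is correct and takes essentially the same route as the paper's: the paper likewise identifies $\qua\,(\tri\BOX\boxobj{n})=\qua\,(\sn\boxobj{n})=\cn\boxobj{n}$ naturally in $\BOX$-objects $\boxobj{n}$ and then concludes by the cocontinuity of $\qua\circ\tri$ established in Lemma~\ref{lem:qt.cocontinuous}. The only difference is that you spell out, via Yoneda, the adjunction $\tri\dashv\qua$, and the full faithfulness of $\sn$, the object-wise identification that the paper simply asserts.
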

\begin{proof}
  There exist isomorphisms
  $$\qua\;(\tri\BOX\boxobj{n})=\qua\;(\sn\boxobj{n})=\cn\boxobj{n}$$
  natural in $\BOX$-objects $\boxobj{n}$.  
  The claim then follows by $\qua\circ\tri$ cocontinuous [Lemma \ref{lem:qt.cocontinuous}].  
\end{proof}

Triangulation relates our different subdivisions and hence justifies our abuse in notation for $\sd$, $\gamma$, and $\bar\gamma$.  
Figure \ref{fig:sd.cd.tri} illustrates a special case of the isomorphism claimed below.

\begin{prop}
  \label{prop:sd.cd.tri}
  There exists a dotted natural isomorphism making the diagram
  \begin{equation}
   \label{eqn:sd.cd.tri}
   \xymatrix@C=3pc{
     **[l]\tri
   & \tri\circ\cd
       \ar@{.>}[d]|\eta
       \ar[dr]^-{\tri\;\gamma}
       \ar[l]_-{\tri\;\bar\gamma}
   \\
   & \sd\circ\tri
       \ar[r]_-{\gamma\tri}
       \ar[ul]^-{\bar\gamma\tri}
   & **[r]\tri
   }
  \end{equation}
  commute.
\end{prop}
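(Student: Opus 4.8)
The plan is to reduce the entire statement to a single finite verification at the generating object $[1]$ of $\BOX$. First I would record that all three functors $\tri$, $\cd$, $\sd$ are cocontinuous — $\tri$ and $\cd$ by definition, and $\sd$ because it is precomposition along $(-)^{\oplus 2}:\DEL\ra\DEL$, hence (being at once a left and a right adjoint) preserves all limits and colimits. So $\tri\circ\cd$ and $\sd\circ\tri$ are cocontinuous functors $\PRESHEAVES\BOX\ra\PRESHEAVES\DEL$, and both are in fact strong monoidal from $(\PRESHEAVES\BOX,\otimes)$ to $(\PRESHEAVES\DEL,\times)$: $\cd$ is monoidal by construction; $\sd$ carries $\times$ to $\times$ since products of presheaves are computed objectwise and precomposition commutes with them; and $\tri$ carries $\otimes$ to $\times$, which follows because $\tri(-\otimes-)$ and $\tri(-)\times\tri(-)$ are both cocontinuous in each variable (using that $\times$ on $\PRESHEAVES\DEL$ preserves colimits) and agree on representables, where $\tri\BOX\boxobj{m+n}=\sn\boxobj{m+n}=\sn\boxobj{m}\times\sn\boxobj{n}$ uses that $\sn$ preserves finite products. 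Now restriction along the Yoneda embedding is an equivalence from cocontinuous functors $\PRESHEAVES\BOX\ra\PRESHEAVES\DEL$ to functors $\BOX\ra\PRESHEAVES\DEL$, and $\BOX$ is the free monoidal category on $\BOX_1$; chaining these, a monoidal natural transformation between monoidal cocontinuous functors $\PRESHEAVES\BOX\ra\PRESHEAVES\DEL$ is determined by its component at $[1]$ (its $[0]$-component being forced). Since $\tri\gamma,\tri\bar\gamma:\tri\cd\ra\tri$ and $\gamma\tri,\bar\gamma\tri:\sd\tri\ra\tri$ are all monoidal and the isomorphism $\Phi$ I shall build will be monoidal, it suffices to (i) construct a simplicial isomorphism $\Phi_{[1]}:\tri\cd\BOX[1]\ra\sd\tri\BOX[1]$ compatible with the two morphisms $[0]\ra[1]$ of $\BOX_1$, and (ii) check the two triangles after evaluating at $[1]$.

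For (i): by definition $\cd\BOX[1]=\BOX[\double[1]]=\BOX[[2]]$, and because $\XBOX$ omits the doubling function $[1]\ra[2]$, this is the subdivided interval — the $1$-dimensional cubical set with vertices $0,1,2$ and exactly the two nondegenerate edges $0\ra1$, $1\ra2$ — so, by cocontinuity, $\tri\cd\BOX[1]$ is the simplicial interval $\DEL[1]\cup_{\DEL[0]}\DEL[1]$ with the corresponding three vertices and two edges. On the other side $(\sd\DEL[1])[m]=\DEL([2m+1],[1])$, from which one reads off that $\sd\tri\BOX[1]=\sd\DEL[1]$ has vertices the three monotone maps $(0,0),(0,1),(1,1):[1]\ra[1]$, nondegenerate edges $(0,0)\ra(0,1)$ and $(0,1)\ra(1,1)$, and no higher nondegenerate cells. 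Setting $\Phi_{[1]}:0\mapsto(0,0)$, $1\mapsto(0,1)$, $2\mapsto(1,1)$ on vertices extends uniquely to a simplicial isomorphism. Since $\double$ sends a face $\delta:[0]\ra[1]$ to the map $0\mapsto 2\delta(0)$, the vertex of $\tri\cd\BOX[1]$ in the image of $\tri\cd\BOX[\delta]$ is $0$ or $2$, which $\Phi_{[1]}$ sends to precisely the vertex $(0,0)$ or $(1,1)$ in the image of $\sd\tri\BOX[\delta]$; so $\Phi_{[1]}$ is compatible with both faces (and trivially with $[1]\ra[0]$ and the constants).

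For (ii): $(\tri\gamma)_{\BOX[1]}$ is $\tri$ applied to the cubical map $\BOX[[2]]\ra\BOX[1]$ induced by $\gamma^\BOX_{[1]}=\max(1,-)-1$, i.e. $0,1\mapsto 0$ and $2\mapsto 1$ (collapsing the edge $0\ra1$); whereas $(\gamma\tri)_{\BOX[1]}=\gamma_{\DEL[1]}:\sd\DEL[1]\ra\DEL[1]$ is precomposition with the inclusion $\gamma^\DEL_{[n]}:[n]\ira[2n+1]$ onto the first $n+1$ elements, so on vertices $(a_0\leqslant a_1)\mapsto a_0$, i.e. $(0,0),(0,1)\mapsto 0$ and $(1,1)\mapsto 1$ (collapsing the edge $(0,0)\ra(0,1)$). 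Chasing a vertex and an edge through $\Phi_{[1]}$ then gives $(\gamma\tri)_{\BOX[1]}\circ\Phi_{[1]}=(\tri\gamma)_{\BOX[1]}$, the right-hand triangle; substituting $\bar\gamma^\BOX_{[1]}=\min(-,1)$ and $\bar\gamma^\DEL_{[n]}$ (the inclusion onto the last $n+1$ elements) for $\gamma^\BOX_{[1]}$ and $\gamma^\DEL_{[n]}$ gives the other triangle verbatim. Extending $\Phi_{[1]}$ first monoidally over $\BOX$ and then cocontinuously over $\PRESHEAVES\BOX$, as in the first paragraph, yields the dotted natural isomorphism making the whole diagram commute.

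The only real obstacle is making the reduction to $[1]$ airtight — in particular checking that $\tri$ is strong monoidal from $(\otimes)$ to $(\times)$ (not stated explicitly in the excerpt, but immediate on representables from $\sn$ preserving products), that $\sd$ preserves finite products, and that the restriction/free-monoidal reductions for natural transformations are legitimate — and then correctly identifying both $\tri\cd\BOX[1]$ and $\sd\DEL[1]$ concretely as the subdivided interval, which is exactly what excluding the doubling map from $\XBOX$ buys. With those descriptions in hand the comparison of the five maps at $[1]$ is a routine finite check.
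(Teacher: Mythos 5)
Your proposal is correct and follows essentially the same route as the paper: both reduce the statement, via cocontinuity and the monoidality of all functors and natural transformations involved (with $\PRESHEAVES\DEL$ taken Cartesian monoidal and $\BOX$ free monoidal on $\BOX_1$), to an explicit finite verification at the generating object $[1]$. The only cosmetic difference is that the paper phrases the check as a natural bijection $\XBOX([m],\double[n])\cong\DEL([m]^{\oplus 2},[n])$ for $m,n\in\{0,1\}$ with explicit inverse formulas, whereas you identify both $\tri\cd\BOX[1]$ and $\sd\DEL[1]$ as the three-vertex subdivided interval and match vertices.
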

\begin{proof}
  The solid functions in the diagram
  \begin{equation}
   \label{eqn:sd.cd.tri.simple}
   \xymatrix@C=4pc{
     **[l]\BOX_1([m],[n])
   & \XBOX([m],\double\;[n])
     \ar@{.>}[d]|-{\alpha_{[m][n]}}
     \ar[dr]^-{\BOX(\id_{[m]},\gamma^{\BOX}_{[n]})}
     \ar[l]_-{\BOX(\id_{[m]},\bar\gamma^{\BOX}_{[n]})}
   \\
   & \DEL([m]^{\oplus 2},[n])
   \ar[r]_-{\DEL(\gamma^{\DEL}_{[m]},\id_{[n]})}
   \ar[ul]^-{\DEL(\bar\gamma^{\DEL}_{[m]},\id_{[n]})}
   & **[r]\BOX_1([m],[n]),
   }
  \end{equation}
  describe the $[m]$-components of the $\BOX[n]$-components of the solid natural transformations in (\ref{eqn:sd.cd.tri}) for the case $m,n\in\{0,1\}$.
  It suffices to construct a bijection $\alpha_{[m][n]}$, natural in $\BOX_1$-objects $[m]$ and $[n]$, making (\ref{eqn:sd.cd.tri.simple}) commute.
  For then the requisite cubical isomorphism $\eta_{B}$, natural in cubical sets $B$, in (\ref{eqn:sd.cd.tri}) would exist for the case $B=\BOX[n]$ for $\BOX_1$-objects $[n]$, hence for the case $B$ representable because all functors and natural transformations in $(\ref{eqn:sd.cd.tri})$ are monoidal (where we take $\PRESHEAVES\DEL$ to be Cartesian monoidal), and hence for the general case by naturality.  
  
  Let $\alpha_{[m][n]}$ and $\beta_{([m],[n])}$ be the functions
  $$\alpha_{[m][n]}:\XBOX([m],\double\;[n])\leftrightarrows\DEL([m]^{\oplus 2},[n]):\beta_{([m],[n])},$$
  natural in $\BOX_1$-objects $[m]$ and $[n]$, defined by
  \begin{align*}
    \alpha_{[m][n]}(\phi)(i)\;=\;&
    \begin{cases}
      (\gamma^{\BOX}_{[n]}\phi)(i), & i\in\{0,1,\ldots,m\}\\
      (\bar\gamma^{\BOX}_{[n]}\phi)(i-m-1), & i\in\{m+1,m+2,\ldots,2m+1\}
    \end{cases}\\
    \beta_{([m],[n])}(\phi)(j)\;=\;&\phi\gamma^{\DEL}_{[m]}(j)+\phi\bar\gamma^{\DEL}_{[m]}(j),\quad j=0,1,\ldots,m.
  \end{align*}
  An exhaustive check confirms that $\alpha_{[1][1]}(\phi):[3]\ra[1]$ is monotone for $\XBOX$-morphisms $\phi:[1]\ra[2]$. 
  An exhaustive check for all $m,n=0,1$ shows that $\alpha$ and $\beta$ are inverses to one another.
  Hence $\alpha$ defines a natural isomorphism in (\ref{eqn:sd.cd.tri.simple}).
  The right triangle in (\ref{eqn:sd.cd.tri.simple}) commutes because
  $$(\alpha_{[m][n]}(\phi))\gamma^{\DEL}_{[m]}=\alpha_{[m][n]}(\phi)_{\restriction[m]}=\gamma^{\BOX}_{[n]}\phi.$$
  Similarly, the left triangle in (\ref{eqn:sd.cd.tri.simple}) commutes.
\end{proof}

Triangulation relates our different stream realization functors.  

\begin{prop}
  \label{prop:tri.direalize}
  The following commutes up to natural isomorphism.
  \begin{equation*}
   \xymatrix@C=3pc{
     \hat\BOX
       \ar[r]^-{\direalize{\;-\;}}
    \ar[dr]_-{\tri}
   &
     \STREAMS
   \\
   & \hat\DEL
       \ar[u]_-{\direalize{\;-\;}}
   }
\end{equation*}
\end{prop}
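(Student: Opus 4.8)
The plan is to verify the claimed commutativity on representable cubical sets and then propagate it to all cubical sets by cocontinuity, exactly as one proves equalities of cocontinuous functors on a presheaf category. First I would observe that both composites $\direalize{-}\circ\tri$ and $\direalize{\tri(-)}$... more precisely, both $\direalize{\tri(-)}:\hat\BOX\ra\STREAMS$ and $\direalize{-}:\hat\BOX\ra\STREAMS$ are cocontinuous: the cubical $\direalize{-}$ is cocontinuous by Definition \ref{defn:cubical.direalizations}, $\tri$ is cocontinuous by Definition \ref{defn:tri} (it is a left adjoint), and the simplicial $\direalize{-}$ is cocontinuous by Definition \ref{defn:simplicial.direalizations}; so the composite is cocontinuous. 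Since every cubical set is a colimit of representables, it suffices to produce a natural isomorphism of functors $\BOX\ra\STREAMS$ between the restrictions of the two composites to the representables, i.e.\ to exhibit coherent stream isomorphisms $\direalize{\tri\BOX\boxobj{n}}\cong\direalize{\BOX\boxobj{n}}$ natural in $\BOX$-objects $\boxobj{n}$.

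Next I would reduce the representable case to a single generator. Both functors, restricted to $\BOX$, are monoidal: the cubical $\direalize{-}$ sends $\otimes$ to binary Cartesian product in $\STREAMS$ by Definition \ref{defn:cubical.direalizations}; $\tri$ is monoidal sending $\otimes$ to (Cartesian) products of simplicial sets, by the universal property of $\BOX$ as the free monoidal category on $\BOX_1$ (used in \S\ref{subsec:cubical.sets}) together with $\tri\BOX\boxobj{n}=\sn\boxobj{n}$ and $\sn(\boxobj{n})\cong(\sn[1])^{\otimes n}$ — here Theorem \ref{thm:direalization.preserves.finite.products}, that $\direalize{-}:\hat\DEL\ra\STREAMS$ preserves finite products, is exactly what converts a Cartesian product of simplicial sets into a Cartesian product of streams after $\direalize{-}$. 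Hence both composites $\BOX\ra\STREAMS$ are monoidal functors, so by the freeness of $\BOX$ on $\BOX_1$ it is enough to compare them on $\BOX_1$, i.e.\ on the objects $[0]$ and $[1]$ and the two morphisms $[0]\rightrightarrows[1]$. On $[0]$ both give the terminal stream $\{0\}$; on $[1]$, the cubical $\direalize{-}$ gives $\vec\I$ by Definition \ref{defn:cubical.direalizations}, while $\direalize{\tri\BOX[1]}=\direalize{\sn[1]}=\direalize{\DEL[1]}$, which is the topological interval with its lattice $\min/\max$ circulation by Definition \ref{defn:simplicial.direalizations} — and that is precisely $\vec\I$ by the Definition preceding Theorem \ref{thm:embed}. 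The two coface maps match because on vertices they are forced, and stream maps $\{0\}\ra\vec\I$ are determined by their image. This produces the desired monoidal natural isomorphism on $\BOX$.

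Finally I would assemble the pieces: the monoidal natural isomorphism on $\BOX$ extends uniquely (up to isomorphism) along $\BOX\ira\hat\BOX$ and then, by cocontinuity of both composite functors and the fact that every cubical set is canonically a colimit of representables, to a natural isomorphism $\direalize{\tri B}\cong\direalize{B}$ for all cubical sets $B$; naturality in $B$ is automatic from the universal property of the colimit presentation. The main obstacle, and the only genuinely non-formal input, is invoking Theorem \ref{thm:direalization.preserves.finite.products} correctly: without it one cannot identify $\direalize{\tri(B\otimes C)}=\direalize{\tri B\times\tri C}$ with $\direalize{\tri B}\times\direalize{\tri C}$, and the monoidal-functor reduction would collapse. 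Everything else is the standard "cocontinuous functors agreeing on representables agree everywhere" argument together with the bookkeeping that the interval objects and their coface maps agree on the nose.
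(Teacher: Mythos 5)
Your proposal is correct and follows essentially the same route as the paper's proof: reduce by cocontinuity and monoidality (freeness of $\BOX$ on $\BOX_1$) to comparing the two composites on $[0]$, $[1]$, and the coface maps, with Theorem \ref{thm:direalization.preserves.finite.products} supplying exactly the step that identifies $\direalize{\tri(B\otimes C)}$ with $\direalize{\tri B}\times\direalize{\tri C}$. The paper's only additional citation is Theorem \ref{thm:x.closed} for colimits commuting with finite products in $\STREAMS$ when passing from representables to general cubical sets, which your colimit bookkeeping implicitly requires.
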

\begin{proof}
  It suffices to show that there exists an isomorphism
  \begin{equation}
   \label{eqn:tri.direalize}
     \direalize{\DEL[n]}\;\cong\;\direalize{\BOX[n]}
  \end{equation}
  natural in $\BOX_1$-objects $[n]$ because $\direalize{-}:\hat\BOX\ra\STREAMS$ and $\tri$ send tensor products to binary Cartesian products, $\direalize{-}:\hat\DEL\ra\STREAMS$ preserves binary Cartesian products [Theorem \ref{thm:direalization.preserves.finite.products}], and colimits commute with tensor products in $\hat\BOX$ and $\STREAMS$ [Theorem \ref{thm:x.closed}].
  The linear homeomorphism $|\DEL[1]|\ra\I$ sending $|0|$ to $0$ and $|1|$ to $1$, an isomorphism of topological lattices and hence streams [Theorem \ref{thm:embed}], defines the $[1]$-component of our desired natural isomorphism (\ref{eqn:tri.direalize}) because
  $$\direalize{\BOX[[1]\ra[0]]}\;:\;\direalize{\BOX[1]}\ra\direalize{\BOX[0]},\quad\direalize{\DEL[[1]\ra[0]]}\;:\;\direalize{\DEL[1]}\ra\direalize{\DEL[0]}$$
  are both terminal maps and the functions $\direalize{\BOX[\delta:[0]\ra[1]]}\;:\;\direalize{\BOX[0]}\ra\direalize{\BOX[1]}$ and $\direalize{\DEL[\delta:[0]\ra[1]]}\;:\;\direalize{\DEL[0]}\ra\direalize{\DEL[1]}$ both send $0$ to minima or both send $0$ to maxima, for each function $\delta:[0]\ra[1]$. 
\end{proof}

\begin{defn}
  We write $\chomeo_B$ for isomorphism
  \begin{equation*}
    \chomeo_B\;:\;\direalize{\cd B}\;\cong\;\direalize{B},
  \end{equation*}
  natural in cubical sets $B$, induced from $\shomeo_{\tri B}:\;\direalize{\sd \tri B}\;\cong\;\direalize{\tri B}$ and the natural isomorphisms $\direalize{B}\cong\direalize{\tri B}$ and $\direalize{\tri\cd B}\cong\direalize{\sd\tri B}$ claimed in Propositions \ref{prop:sd.cd.tri} and \ref{prop:tri.direalize}.
\end{defn}

Context will make clear to which of the two natural isomorphisms
$$\shomeo:\direalize{\sd-}\cong\direalize{-}:\PRESHEAVES\DEL\ra\STREAMS,\quad\chomeo:\direalize{\cd-}\cong\direalize{-}:\PRESHEAVES\BOX\ra\STREAMS.$$
$\chomeo$ refers.

\subsubsection{Proofs of statements in \S\ref{sec:cubical}}\label{subsubsec:proofs}
The functor $\tri$ preserves and reflects monics and intersections of subobjects.
It follows that $\cd:\PRESHEAVES\BOX\ra\PRESHEAVES\BOX$, and hence also $\cd^2:\PRESHEAVES\BOX\ra\PRESHEAVES\BOX$, preserve monics and intersections of subobjects.
Moreover, $\sd:\PRESHEAVES\BOX\ra\PRESHEAVES\BOX$ preserves colimits by construction.  
Thus we can construct $\supp_{\cd^2}(B,C)\subset C$ for all cubical sets $C$ and subpresheaves $B\subset\cd^2C$.

\begin{proof}[Proof of Lemma \ref{lem:cd.fold}]
  For clarity, let $\psi_n$ denote the $\boxobj{n}$-component $\psi_{\boxobj{n}}$ of a cubical function $\psi$.  
  
  The last statement of the lemma would follow from the other statements because $A\cap\cd^2\partial B$ would be empty by minimality, the natural epi $\BOX\boxobj{\dim B}\ra B$ passes to an isomorphism $\BOX\boxobj{\dim B}/\partial\BOX\boxobj{\dim B}\cong B/\partial B$ and hence induces an isomorphism $\cd^2\BOX\boxobj{\dim B}/\cd^2\partial\BOX\boxobj{\dim B}\cong\cd^2B/\cd^2\partial B$ by $\cd$ cocontinuous, and all atomic subpresheaves of $\cd^2\BOX\boxobj{\dim B}$ are representable.  
  
  The case $C=\BOX[0]$ is immediate.
  
  The case $C=\BOX[1]$ follows from Lemma \ref{lem:sd.fold} because $\tri$ restricts and corestricts to an isomorphism between full subcategories of cubical sets and simplicial sets  having dimensions $0$ or $1$.
  
  The case $C$ representable then follows from an inductive argument.
  
  Consider the general case.  
  We can assume $C=\supp_{\cd^2}(A,C)$ without loss of generality and hence take $C$ to be atomic [Lemma \ref{lem:atomic.supports}].
  Let $\tilde{A}=\BOX\boxobj{\dim A}$ and $\tilde{C}=\BOX\boxobj{\dim C}$.  
  Let $\epsilon$ be the unique epi cubical function $\tilde{C}\ra C$. 
  We can identify $\tilde{A}$ with a subpresheaf of $\cd^2\tilde{C}$ and the unique epi $\tilde{A}\ra A$ with an appropriate restriction and corestriction of $\cd^2\epsilon:\cd^2\tilde{C}\ra\cd^2C$ by $\tilde{A}$ projective and $\dim A$ minimal.  
  There exist unique minimal subpresheaf $\tilde{B}\subset\tilde{C}$ such that $\tilde{A}\cap\cd^2\tilde{B}\neq\varnothing$ and unique retraction $\tilde\pi:\tilde{A}\ra\tilde{A}\cap\cd^2\tilde{B}$ by the previous case. 

  Let $B$ be the subpresheaf $\epsilon(\tilde{B})$ of $C$. 
  Consider a subpresheaf $B'\subset C$ such that $A\cap\cd^2B'\neq\varnothing$.  
  Pick an atomic subpresheaf $A'\subset A\cap\cd^2B'$.  
  Let $\tilde{A}'$ be an atomic subpresheaf of the preimage of $A'$ under the epi $\tilde{A}\ra A$.  
  Then $(\cd^2\epsilon)(\tilde{A}')\subset A'\subset\cd^2B'$, hence $\tilde{A}\cap(\cd^2\epsilon)^{-1}(B')=\tilde{A}\cap\cd^2(\epsilon^{-1}B')\neq\varnothing$, hence $\tilde{B}\subset\epsilon^{-1}B'$ by minimality of $\tilde{B}$, and hence $B\subset B'$.  
  
  Let $\gamma'=(\gamma\bar\gamma)_C$ and $\tilde\gamma'=(\gamma\bar\gamma)_{\tilde{C}}$.  
  
  It suffices to show that the cubical function $\pi:A\ra A\cap\cd^2B$ defined by
  \begin{equation}
    \label{eqn:cd.fold.retraction}
    \pi_n:\sigma\mapsto((\cd^2\epsilon)\tilde\pi)_n(\tilde\sigma),\quad n=0,1,\ldots,\;\sigma\in A(\boxobj{n}),\;\tilde\sigma\in(\cd^2\epsilon)_n^{-1}(\sigma)\cap\tilde{A}
  \end{equation}
  is well-defined. 
  For then, 
  \begin{align*}
    &\; \gamma'_n(\pi_n\sigma)\\
    =&\; \gamma'_n((\cd^2\epsilon)_n(\tilde\pi_n\tilde\sigma)),\;\tilde\sigma\in(\cd^2\epsilon)^{-1}_n(\sigma)&& \text{definition of $\pi$}\\
    =&\; (\cd^2\epsilon)_n(\tilde\gamma'_n(\tilde\pi_n\tilde\sigma)),\;\tilde\sigma\in(\cd^2\epsilon)^{-1}_n(\sigma) && \text{naturality of $\gamma\bar\gamma$}\\
    =&\; (\cd^2\epsilon)_n(\tilde\gamma'_n\tilde\sigma),\;\tilde\sigma\in(\cd^2\epsilon)^{-1}_n(\sigma) && \text{previous case}\\
    =&\; \gamma'_n(\sigma) && \text{naturality of $\gamma\bar\gamma$}
  \end{align*}
  
  We show $\pi$ is well-defined by induction on $\dim C$.  
  
  In the base case $C=\BOX[0]$, $B=\BOX[0]$ and hence $\pi$ is the well-defined terminal cubical function.  

  Consider a natural number $d$, inductively assume $\pi$ is well-defined for the case $\dim C<d$, and consider the case $\dim C=d$.  
  Consider natural number $n$, $\sigma\in A(\boxobj{n})$, and $\tilde\sigma\in(\cd^2\epsilon)_n^{-1}\sigma\cap\tilde{A}$.  

  In the case $\sigma\notin\cd^2\partial C$, $(\cd^2\epsilon)_n^{-1}\sigma=\{\tilde\sigma\}$ because $\cd^2\epsilon:\cd^2\tilde{C}\ra\cd^2C$ passes to an isomorphism $\cd^2\tilde{C}/\cd^2\partial\tilde{C}\ra\cd^2C/\cd^2\partial C$ by $\cd^2$ cocontinuous.  
  Hence $\pi_n(\tilde\sigma)$ is well-defined.

  Consider the case $\sigma\in\cd^2\partial C$ and hence $\tilde\sigma\in\cd^2\partial\tilde C$.  
  Then $\tilde{B}$ is the minimal subpresheaf of $\tilde{C}$ such that $\langle\tilde\sigma\rangle\cap\cd^2\tilde{B}\neq\varnothing$ by $\tilde{B}$ minimal and hence the unique retraction $\tilde\pi_{\sigma}:\langle\tilde\sigma\rangle\ra\langle\tilde\sigma\rangle\cap\cd^2\tilde{B}$ is a restriction of $\tilde\pi$.    
  The retraction $\tilde\pi_{\sigma}$ passes to a well-defined retraction $\pi_{\sigma}:\langle\sigma\rangle\ra\langle\sigma\rangle\cap\cd^2 B$ by the inductive hypothesis because $\dim\supp_{\cd^2}(\langle\sigma\rangle,C)\leq\dim\supp_{\cd^2}(\cd^2\partial C,C)=\dim\partial C=d-1$.  
  Thus $((\cd^2\epsilon)\tilde\pi)_n(\tilde\sigma)=((\cd^2\epsilon)\tilde\pi_{\sigma})_n(\tilde\sigma)=(\pi_{\sigma})_n(\sigma)$ is a function of $\sigma$ and hence $\pi_n(\sigma)$ is well-defined.  

\end{proof}

\begin{proof}[Proof of Lemma \ref{lem:cd.star.collapse}]
  Consider an atomic subpresheaf $A\subset\cd^2 C$ such that $v\in A[0]$.  
  There exists a minimal subpresheaf $B\subset C$ such that $A\cap\cd^2B\neq\varnothing$ [Lemma \ref{lem:cd.fold}].  
  Hence $B\subset\langle\supp_{\cd^2}(\langle v\rangle,C)\rangle$ by minimality.  
  Hence
  $$(\gamma\bar\gamma)_C(A)\subset(\gamma\bar\gamma)_C(A\cap\cd^2B)\subset(\gamma\bar\gamma)_C(\cd^2B)\subset B\subset\supp_{\cd^2}(\langle v\rangle,C)$$
  by Lemma \ref{lem:cd.fold}.  
\end{proof}

We introduce the following lemma as an intermediate step in proving Theorem \ref{thm:direalization.preserves.embeddings}.
The lemma asserts that we can approximate the global preorder $\leqslant_{\direalize{B}}$ on a stream $\direalize{\;B\;}$ by the value of the circulation of $\direalize{C}$ on smaller and smaller neighborhoods of $|B|$ in $|C|$.
Such neighborhoods take the form of open stars of $B$ in $C$ after taking iterated subdivisions of $C$.

\begin{lem}
  \label{lem:direalization.preserves.preordered.embeddings}
  For each pair of cubical sets $B\subset C$,
  \begin{equation}
    \label{eqn:embeddings}
    \graph{\leqslant^{\direalize{\;B\;}}_{|B|}}=|B|^2\cap\bigcap_{n=1}^\infty\;\graph{\leqslant^{\direalize{\;C\;}}_{\chomeo^n_C\openstar_{\cd^nC}\cd^nB}}
  \end{equation}
  where $\leqslant^X$ denotes the circulation of a stream $X$.
\end{lem}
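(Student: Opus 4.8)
The plan is to prove the two inclusions separately; the left-to-right inclusion is formal, and the reverse inclusion carries all the content.

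\emph{Left-to-right.} First note that $|B|\subseteq\chomeo^n_C\openstar_{\cd^nC}\cd^nB$ for every $n\geqslant 1$. Indeed, by naturality of $\chomeo$ with respect to the inclusion $B\subset C$, the homeomorphism $\chomeo^n_C$ restricts to the homeomorphism $|\cd^nB|\xra{\cong}|B|$; and any point $p$ of the subcomplex $\cd^nB\subseteq\cd^nC$ lies in the topological interior of $|\Star_{\cd^nC}\cd^nB|$, because the open cells of $\cd^nC$ whose closures contain $p$ form an open neighbourhood of $p$ and each of them contains the carrier of $p$, which is a cell of $\cd^nB$, so each meets $\cd^nB$ and hence lies in $\Star_{\cd^nC}\cd^nB$. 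Now let $\iota:\direalize B\ra\direalize C$ be the stream map $\direalize{-}$ assigns to $B\ira C$ and write $U_n=\chomeo^n_C\openstar_{\cd^nC}\cd^nB$. Since $|B|\subseteq U_n$ we have $\iota^{-1}(U_n)=|B|$, so the stream-map axiom for $\iota$ forces $x\leqslant^{\direalize B}_{|B|}y\Rightarrow\iota(x)\leqslant^{\direalize C}_{U_n}\iota(y)$; intersecting over $n$ yields the inclusion $\graph{\leqslant^{\direalize B}_{|B|}}\subseteq|B|^2\cap\bigcap_n\graph{\leqslant^{\direalize C}_{U_n}}$.

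\emph{Right-to-left.} Suppose $(x,y)\in|B|^2$ and $x\leqslant^{\direalize C}_{U_n}y$ for all $n$; equivalently, writing $x_n=(\chomeo^n_C)^{-1}x$ and $y_n=(\chomeo^n_C)^{-1}y$, we have $x_n\leqslant^{\direalize{\cd^nC}}_{\openstar_{\cd^nC}\cd^nB}y_n$ for all $n$, and the goal is $x\leqslant^{\direalize B}_{|B|}y$. The combinatorial tool is that $(\gamma\bar\gamma)_D:\cd^2D\ra D$ carries $\Star_{\cd^2D}\cd^2B'$ into $B'$ for every subcomplex $B'\subseteq D$: by Lemma \ref{lem:cd.star.collapse}, $(\gamma\bar\gamma)_D\Star_{\cd^2D}\langle v\rangle\subseteq\supp_{\cd^2}(\langle v\rangle,D)$, which lies in $B'$ when $v$ is a vertex of $\cd^2B'$, and $\Star_{\cd^2D}\cd^2B'$ is the union of the closed stars of such vertices. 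Applying this with $D=\cd^{n-2}C$ and $B'=\cd^{n-2}B$, and using Lemmas \ref{lem:cd.fold} and \ref{lem:natural.cd.folding} to organize the induced retractions, produces a cubical function $\Star_{\cd^nC}\cd^nB\ra\cd^{n-2}B$ compatible with $(\gamma\bar\gamma)_{\cd^{n-2}C}$ and fixing every vertex of $\cd^{n-2}B$. Realizing it and composing with $\chomeo^{n-2}_B$ gives, under the identifications $|\cd^nB|\cong|B|\cong|\cd^{n-2}B|$, a stream self-map $h_n$ of $\direalize B$ fixing the geometric vertices of $\cd^{n-2}B$, hence converging uniformly to $\id_{|B|}$ as $n\ra\infty$ since the mesh of $\cd^n$ tends to zero. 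Pushing a compact witness of $x_n\leqslant_{\openstar_{\cd^nC}\cd^nB}y_n$ through this stream map and passing to the limit $n\ra\infty$ should produce a directed path in $|B|$ from $x$ to $y$, i.e.\ $x\leqslant^{\direalize B}_{|B|}y$.

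The main obstacle is precisely this last step, where two points need care. First, $\openstar_{\cd^nC}\cd^nB$ occurs in the statement carrying the circulation \emph{restricted from} $\direalize{\cd^nC}$, whereas the retraction above lives on $\direalize{\Star_{\cd^nC}\cd^nB}$; bridging the two amounts to showing the closed-star inclusion realizes to a stream embedding in the relevant range --- an instance of Theorem \ref{thm:direalization.preserves.embeddings}. This is not circular because, by the $k$-circulation axiom of Definition \ref{defn:streams}, the relation $x_n\leqslant_{\openstar_{\cd^nC}\cd^nB}y_n$ is generated by relations witnessed on compact Hausdorff subspaces of $\openstar_{\cd^nC}\cd^nB$, hence inside \emph{finite} subcomplexes of $\Star_{\cd^nC}\cd^nB$, for which the embedding can be checked directly (via Lemma \ref{lem:restrictions.define.subcirculations} and the fact that the open stars $\openstar_{\cd^{n+k}C}\cd^{n+k}B$ shrink to $|B|$ as $k\ra\infty$). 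Second, $h_n$ fixes $x$ and $y$ only approximately, so one genuinely needs the hypothesis at all $n$: the passage $n\ra\infty$, together with the confinement of each witness to the shrinking neighbourhood $\openstar_{\cd^nC}\cd^nB$ of $|B|$, is what lands a witness path strictly between $x$ and $y$ rather than merely between nearby points.
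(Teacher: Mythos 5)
Your forward inclusion and your combinatorial core are both on target: the paper also deduces $\graph{\leqslant^{\direalize{\;B\;}}_{|B|}}\subset|B|^2\cap\bigcap_n\graph{\leqslant^{\direalize{\;C\;}}_{\chomeo^n_C\openstar_{\cd^nC}\cd^nB}}$ from the stream maps induced by the inclusions $\cd^nB\ira\Star_{\cd^nC}\cd^nB\ira\cd^nC$, and it uses exactly Lemma \ref{lem:cd.star.collapse} to corestrict $(\gamma\bar\gamma)_{\cd^{n-2}C}$ to a cubical function $\gamma'_{(n)}:\Star_{\cd^nC}\cd^nB\ra\cd^{n-2}B$ carrying $\Star_{\cd^nC}A_n(z)$ into $A_{n-2}(z)$.

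The genuine gap is your last step. You propose to push a witness of $x_n\leqslant^{\direalize{\;\cd^nC\;}}_{\openstar_{\cd^nC}\cd^nB}y_n$ forward by $h_n$ and ``pass to the limit'' to ``produce a directed path in $|B|$ from $x$ to $y$.'' First, the relation $\leqslant^{\direalize{\;B\;}}_{|B|}$ is a circulation value, not a path-preorder, so producing a directed path is not what needs to be shown; and even granting a path-reformulation, extracting a limiting directed path from a sequence of witnesses requires an equicontinuity/reparametrization argument that you do not supply. Second, what the limit actually yields is $h_n(x)\leqslant^{\direalize{\;B\;}}_{|B|}h_n(y)$ with $h_n(x)\ra x$, $h_n(y)\ra y$, and to conclude $x\leqslant^{\direalize{\;B\;}}_{|B|}y$ from this you need $\graph{\leqslant^{\direalize{\;B\;}}_{|B|}}$ to be closed in $|B|^2$ --- which you never invoke, and which itself requires first reducing to $B$ finite (via cocontinuity of $\STREAMS\ra\PREORDEREDSETS$, Proposition \ref{prop:almost.topological}) so that $|B|$ is metrizable and the mesh estimates are uniform; your uniform convergence claim for $h_n$ also silently assumes this finiteness. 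The paper avoids the entire limiting procedure by arguing the contrapositive: if $x\nleqslant^{\direalize{\;B\;}}_{|B|}y$, then closedness of the graph (Lemma \ref{lem:simplicial.preorders} and Proposition \ref{prop:tri.direalize}) gives a product neighborhood $U\times V$ of $(x,y)$ missing it, and then a \emph{single} $n\gg0$ together with the stream map $\shomeo^{n-2}_B\direalize{\gamma'_{(n)}}$ shows no witness can exist at stage $n$. Reorganizing your argument contrapositively along these lines, and inserting the reduction to $B$ finite at the outset, would close the gap; as written, the concluding step does not go through.
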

\begin{proof}
  Let $n$ denote a natural number.
  For each $n$,
  \begin{equation*}
    \label{eqn:LHS.embeddings}
    \graph{\leqslant^{\direalize{\;\cd^nB\;}}_{|\cd^nB|}}\subset\graph{\leqslant^{\direalize{\;\Star_{\cd^nC}\cd^nB\;}}_{\openstar_{\cd^nC}\cd^nB}}\subset\graph{\leqslant^{\direalize{\;\cd^nC\;}}_{\openstar_{\cd^nC}\cd^nB}}
  \end{equation*}
  because inclusions define stream maps of the form
  $$\direalize{\cd^nB\ira\Star_{\cd^nC}\cd^nB},\quad\direalize{\Star_{\cd^nC}\cd^nB\ira\cd^nC}$$
  In (\ref{eqn:embeddings}), the inclusion of the left side into the right side follows because the $\ordinomorphism^n$'s define natural isomorphisms $\direalize{\cd^nB}\cong\direalize{B}$ and $\direalize{\cd^nC}\cong\direalize{C}$.

  Consider $(x,y)\in|B|^2$ not in the left side of (\ref{eqn:embeddings}).
  We must show that $(x,y)$ is not in the right side of (\ref{eqn:embeddings}).  
  
  Let $\mathcal{B}$ be the collection of finite subpresheaves of $B$.  
  The underlying preordered set of $\direalize{B}$ is the filtered colimit of underlying preordered sets of streams $\direalize{A}$ for $A\in\mathcal{B}$ by the forgetful functor $\STREAMS\ra\PREORDEREDSETS$ cocontinuous [Proposition \ref{prop:almost.topological}].  
  Hence
  $$\graph{\leqslant_{\direalize{\;B\;}}}=\bigcup_{A\in\mathcal{B}}\graph{\leqslant_{\direalize{\;A\;}}}.$$ 
  It therefore suffices to take the case $B$ finite.
  In particular, we can take $|B|$ to be metrizable.  

  There exists a neighborhood $U\times V$ in $|B|^2$ of $(x,y)$ such that
  \begin{equation}
    \label{eqn:closed.preorder.neighborhood}
  (U\times V)\cap\graph{\leqslant^{\direalize{\;B\;}}_{|B|}}=\varnothing
  \end{equation}
  by $\graph{\leqslant^{\direalize{\;B\;}}_{|B|}}$ closed in $|B|^2$ [Lemma \ref{lem:simplicial.preorders} and Proposition \ref{prop:tri.direalize}].
  Let
  $$A_n(z)=\supp_{|-|}(\ordinomorphism^{-n}_B(z),\cd^nB),\quad z\in|B|,\;n=0,1,\ldots$$
  The $A_n(z)$'s are atomic [Lemma \ref{lem:atomic.supports}] and hence the diameters of the $\ordinomorphism^n_B|A_n(z)|$'s, for each $z$, approach $0$ as $n\ra\infty$ under a suitable metric on $|B|$.  
  Fix $n\gg 0$.  
  Then $\ordinomorphism^{n-2}_B|A_{n-2}(x)|\subset U$ and $\ordinomorphism^{n-2}_B|A_{n-2}(y)|\subset V$.
  The cubical function $(\gamma\bar\gamma)_{\cd^{n-2}C}:\cd^nC\ra\cd^{n-2}C$ restricts and corestricts to a cubical function 
  $$\gamma'_{(n)}:\Star_{\cd^nC}\cd^nB\subset\cd^{n-2}B$$
  such that $\gamma'_{(n)}\Star_{\cd^nC}A_n(z)\subset A_{n-2}(z)$ for all $z\in|B|$ [Lemma \ref{lem:cd.star.collapse}].
  Then
  $$\ordinomorphism^{n-2}_B\direalize{\gamma'_n}(x')\nleqslant^{\direalize{\;B\;}}_{|B|}\ordinomorphism^{n-2}_B\direalize{\gamma'_n}(y'),\quad x'\in\openstar_{\cd^nC}A_n(x),\;y'\in\openstar_{\cd^nC}A_n(y)$$
  by (\ref{eqn:closed.preorder.neighborhood}). 
  Hence by $\ordinomorphism^{n-2}_B\direalize{\gamma'_{(n)}}\;:\;\direalize{\Star_{\cd^nC}\cd^nB}\;\ra\;\direalize{B}$ a stream map, 
  \begin{equation}
    x'\nleqslant^{\direalize{\;C'\;}}_{\openstar_{\cd^nC}\cd^nB}y',\quad x'\in\openstar_{\cd^nC}A_n(x),\;y'\in\openstar_{\cd^nC}A_n(y)
    \label{eqn:almost.RHS.embeddings}
  \end{equation}
  for $C'=\Star_{\cd^nC}\cd^nB$ and hence $C'=\cd^nC$ because $\leqslant^{\direalize{\;B''\;}}_U=\leqslant^{\direalize{\;C''\;}}_U$ for all open subsets $U$ of $|C''|$ which happen to lie in $|B''|$ for all cubical sets $B''\subset C''$ by an application of Proposition \ref{prop:almost.topological}.
  
  We conclude the desired inequality
  $$x\nleqslant^{\direalize{\;C\;}}_{\ordinomorphism^n_C\openstar_{\cd^nC}\cd^nB}y$$
  by setting $(x',y')=(\ordinomorphism^{-n}_Cx,\ordinomorphism^{-n}_Cy)$ and applying $\ordinomorphism^{n}_C:\direalize{\cd^{n}C}\cong\direalize{C}$ to (\ref{eqn:almost.RHS.embeddings}).
\end{proof}

We can now prove that stream realizations of cubical sets preserve embeddings.  

\begin{proof}[Proof of Theorem \ref{thm:direalization.preserves.embeddings}]
  Consider an object-wise inclusion
  $$\iota:B\ira C.$$
  
  Take the case $B$ finite.
  Consider $x\in|B|$.  
  Let
  $$B(x,m)=\Star_{\cd^mB}(\supp_{|-|}(\ordinomorphism^{-m}_B(x),\cd^mB)),\quad m=0,1,\ldots.$$
  The $\ordinomorphism^m_BB(x,m)$'s form a neighborhood basis of $x$ in $|B|$ by $B$ finite.  
  And
  \begin{equation*}
    \graph{\leqslant^{\direalize{\;B(x,m)\;}}_{|B(x,m)|}}=|B(x,m)|^2\cap\bigcap_{n=1}^\infty\;\graph{\leqslant^{\direalize{\;\sd^mC\;}}_{\ordinomorphism^{n}_{\cd^mC}|\openstar_{\cd^{m+n}C}\sd^nB(x,m)|}}
  \end{equation*}
  [Lemma \ref{lem:direalization.preserves.preordered.embeddings}].  
  An application of Lemma \ref{lem:restrictions.define.subcirculations} allows us to conclude that $\direalize{\iota}\;:\;\direalize{B}\ra\direalize{C}$ is a stream embedding.

  Take the general case.  
  Consider a stream embedding $k:K\ra\direalize{B}$ from a compact Hausdorff stream $K$.  
  Let $B'=\direalize{\supp_{|-|}(k(K),B)}$.  
  Inclusion defines a stream embedding $\direalize{B'\ira B}\;:\;\direalize{B'}\ra\direalize{B}$ by the previous case and hence the stream map $k$ corestricts to a stream embedding $k':K\ra\direalize{B'}$ by universal properties of stream embeddings.
  The composite $\direalize{B'\ira B\ira C}\;:\;\direalize{B'}\ra\direalize{C}$ is a stream embedding by the previous case.  
  Thus the composite of stream embedding $k':K\ra\direalize{B'}$ and stream embedding $\direalize{B'\ra C}\;:\;\direalize{B'}\ra\direalize{C}$, is a stream embedding $K\ra\direalize{C}$.
  Thus $\direalize{\iota}\;:\;\direalize{B}\ra\direalize{C}$ is a stream embedding [Lemma \ref{lem:k-embeddings}].  
\end{proof}

\section{Homotopy}\label{sec:equivalence}
Our goal is to prove an equivalence between combinatorial and topological homotopy theories of directed spaces, based directed spaces, pairs of directed spaces, and more general diagrams of directed spaces.  
We can uniformly treat all such variants of directed spaces as functors to categories of directed spaces.
Let $\GENERIC$ denote a category and $\DIAGRAM$ denote a small category throughout \S\ref{sec:equivalence}.  

\begin{defn}
  Fix $\GENERIC$.  
  A \textit{$\GENERIC$-stream} is a functor
  $$\GENERIC\ra\STREAMS$$ 
  and a \textit{$\GENERIC$-stream map} is a natural transformation between $\GENERIC$-streams.  
  We similarly define \textit{$\GENERIC$-simplicial sets}, \textit{$\GENERIC$-simplicial functions}, \textit{$\GENERIC$-cubical sets}, and \textit{$\GENERIC$-cubical functions}.
\end{defn}

We describe $\DIAGRAM$-objects in terms of their coproducts as follows.

\begin{defn}
  Fix $\DIAGRAM$.
  A $\DIAGRAM$-stream $X$ is \textit{compact} if the $\STREAMS$-coproduct
  $$\coprod X$$
  is a compact stream.
  We similarly define \textit{finite} $\DIAGRAM$-simplicial sets, \textit{finite} $\DIAGRAM$-cubical sets, (\textit{open}) \textit{$\DIAGRAM$-substreams} of $\DIAGRAM$-streams, \textit{open covers} of $\DIAGRAM$-streams by $\DIAGRAM$-substreams, and \textit{$\DIAGRAM$-subpresheaves} of $\DIAGRAM$-cubical sets and $\DIAGRAM$-simplicial sets.
\end{defn}

Constructions on directed spaces naturally generalize.  
For each $\GENERIC$ and $\GENERIC$-simplicial set $B$, we write $\direalize{B}$ for the $\GENERIC$-stream $\direalize{-}\circ\;B$.  
We make similar such abuses of notation and terminology throughout \S\ref{sec:equivalence}.
For the remainder of \S\ref{sec:equivalence}, let $\DIAGRAM$ be a fixed small category and $g$ denote a $\DIAGRAM$-object.  

\subsection{Streams}\label{subsec:stream.homotopy}
We adapt a homotopy theory of directed spaces \cite{grandis:d} for streams. 

\begin{defn}
  Fix $\GENERIC$. 
  Consider a $\GENERIC$-stream $X$.
  We write $i_0,i_1$ for the $\GENERIC$-stream maps $X\ra X\times_\STREAMS\vec\I$ defined by $(i_0)_c(x)=(x,0)$, $(i_1)_c(x)=(x,1)$ for each $\GENERIC$-object $c$, when $X$ is understood.  
  Consider $\GENERIC$-stream maps $f,g:X\ra Y$.
  A \textit{directed homotopy from $f$ to $g$} is a dotted $\GENERIC$-stream map making
  \begin{equation*}
    \xymatrix@C=4pc{
    **[l]X\coprod_{\STREAMS}X\ar[r]^{f\coprod_{\STREAMS}g}\ar[d]_{i_0\coprod_{\STREAMS}i_1}
  & **[r]Y\\
  **[l]X\times_{\STREAMS}\vec\I\ar@{.>}[ur]
    }
  \end{equation*}
  commute.  
  We write $f\futurehomotopic g$ if there exists a directed homotopy from $f$ to $g$.  
  We write $\homotopic$ for the equivalence relation on $\GENERIC$-stream maps generated by $\futurehomotopic$.
\end{defn}

We give a criterion for stream maps to be $\homotopic$-equivalent.  
We review definitions of lattices and lattice-ordered topological vector spaces in Appendix \S\ref{sec:preorders}.  
A function $f:X\ra Y$ between convex subspaces of real topological vector spaces is \textit{linear} if $tf(x')+(1-t)f(x'')=f(tx'+(1-t)x'')$ for all $x',x''\in X$ and $t\in\I$.    

\begin{lem}
  \label{lem:primordial.homotopy}
  Fix $\GENERIC$.  
  Consider a pair of $\GENERIC$-stream maps
  $$f',f'':X\ra Y.$$
  If $Y(g)$ is a compact Hausdorff connected, convex subspace and sublattice of a lattice-ordered topological vector space for each $\DIAGRAM$-object $g$ and $Y(\gamma)$ is a linear lattice homomorphism $Y(g')\ra Y(g'')$ for all $\DIAGRAM$-morphisms $\gamma:g'\ra g''$, then $f'\homotopic f''$.  
\end{lem}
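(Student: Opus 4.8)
The plan is to route the desired equivalence through the pointwise join $f'\join f''$, exploiting the fact that the lattice-and-linear structure on the values of $Y$ turns ``straight-line homotopies toward the join'' into directed homotopies. The first step is to note that the pointwise join $f'\join f''$, with $(f'\join f'')_g=\join_{Y(g)}\circ(f'_g,f''_g)$, is again a $\GENERIC$-stream map $X\ra Y$: it is continuous by joint continuity of $\join_{Y(g)}$, natural because each $Y(\gamma)$ is a lattice homomorphism, and a stream map because $\join_{Y(g)}:Y(g)\times Y(g)\ra Y(g)$ is a monotone continuous map of compact Hausdorff connected topological lattices, hence a stream map [Theorem \ref{thm:embed}], precomposed with the stream map $(f'_g,f''_g)$ into the $\STREAMS$-product [Propositions \ref{prop:topological} and \ref{prop:almost.topological}]. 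It then suffices to exhibit directed homotopies $f'\futurehomotopic f'\join f''$ and $f''\futurehomotopic f'\join f''$, since $\homotopic$ is generated by $\futurehomotopic$ and these chain to $f'\homotopic f'\join f''\homotopic f''$.

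For the first, I would set $H:X\times_{\STREAMS}\vec\I\ra Y$ to be
$$H_g(x,t)=(1-t)\,f'_g(x)+t\bigl(f'_g(x)\join_{Y(g)}f''_g(x)\bigr).$$
This lands in $Y(g)$ because $Y(g)$ is convex and, being a sublattice, contains $f'_g(x)\join f''_g(x)$; it is continuous by joint continuity of the vector-space operations and of $\join_{Y(g)}$; and it is natural in $g$ because each $Y(\gamma)$, being a \emph{linear} lattice homomorphism, commutes with both the affine combination and the join. Since $H\circ i_0=f'$ and $H\circ i_1=f'\join f''$, it is a directed homotopy once it is shown to be a $\STREAMS$-morphism, and the symmetric choice $H'_g(x,t)=(1-t)f''_g(x)+t(f'_g(x)\join f''_g(x))$ handles $f''\futurehomotopic f'\join f''$.

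The step I expect to be the main obstacle is verifying that $H$ genuinely respects circulations, rather than merely being continuous and monotone on underlying preordered sets. The key computation is that $((a,b),t)\mapsto(1-t)a+tb$ is monotone on $\graph{\leqslant_{Y(g)}}\times\vec\I$: if $a\leqslant a'$, $b\leqslant b'$ with $a\leqslant b$ and $a'\leqslant b'$, and $s\le t$, then $(1-s)a+sb\leqslant(1-s)a'+sb'\leqslant(1-t)a'+tb'$, the first inequality by raising $a,b$ to $a',b'$ and the second by sliding the weight from $s$ to $t$ along the order-segment from $a'$ to $b'$. Using this I would factor $H_g$ as $X(g)\times\vec\I\ra\graph{\leqslant_{Y(g)}}\times\vec\I\ra Y(g)$, where the first map pairs $f'_g$ with $f'_g\join f''_g$ and lands in $\graph{\leqslant_{Y(g)}}$ because that pair equalizes $\meet_{Y(g)}$ and the first projection, so it factors through the corresponding $\STREAMS$-equalizer, while the second is the monotone continuous map above, a stream map between compact Hausdorff connected topological lattices by Theorem \ref{thm:embed}. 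Chasing through the descriptions of products and equalizers in $\STREAMS$ then shows $H$ is a $\GENERIC$-stream map, which completes the proof.
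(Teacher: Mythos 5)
Your proposal is correct and takes essentially the same route as the paper: the paper's proof consists precisely of forming the pointwise join $f'\join f''$ (a stream map by the lattice hypotheses) and observing that linear interpolation gives directed homotopies $f'\futurehomotopic f'\join f''$ and $f''\futurehomotopic f'\join f''$. Your additional verification that the interpolation respects circulations — factoring through $\graph{\leqslant_{Y(g)}}\times\vec\I$ and checking monotonicity of $((a,b),t)\mapsto(1-t)a+tb$ there — correctly fills in the detail the paper leaves implicit.
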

\begin{proof}
  Let $f'\join f''$ be the $\GENERIC$-stream map $X\ra Y$ defined by
  $$(f'\join f'')_g(x)=f'_g(x)\join_{Y(g)}f''_g(x).$$
  Linear interpolation defines directed homotopies $f'\futurehomotopic f'\join f''$ and $f''\futurehomotopic f'\join f''$.  
\end{proof}

\begin{defn}
  We write $\HOMOTOPY\EQUISTREAMS{\DIAGRAM}$ for the quotient category
  $$\HOMOTOPY\STREAMS^{\DIAGRAM}=\STREAMS^{\DIAGRAM}/\homotopic.$$
\end{defn}

\subsection{Cubical sets}\label{subsec:cubical.homotopy}
We define an analogous homotopy theory for cubical sets.  

\begin{defn}
  Consider a pair $\alpha,\beta:B\ra C$ of $\DIAGRAM$-cubical functions.  
  A \textit{directed homotopy from $\alpha$ to $\beta$} is a dotted $\DIAGRAM$-cubical function making
  \begin{equation*}
    \xymatrix@C=4pc{
    **[l]B\coprod_{\STREAMS}B\ar[r]^{\alpha\coprod_{\PRESHEAVES\BOX}\beta}\ar[d]_{-\otimes\BOX[\delta_-]\coprod_{\PRESHEAVES\BOX}-\otimes\BOX[\delta_+]}
  & **[r]C\\
    **[l]B\otimes\BOX[1]\ar@{.>}[ur]
    }
  \end{equation*}
  commute.  
  We write $\alpha\futurehomotopic\beta$ if there exists a directed homotopy from $\alpha$ to $\beta$.  
  We write $\homotopic$ for the equivalence relation on $\DIAGRAM$-cubical functions generated by $\futurehomotopic$.  
\end{defn}

Cubical nerves of semilattices are naturally homotopically trivial in the following sense.  
Recall that tensor products $B\otimes C$ of cubical sets naturally reside as subpresheaves of binary Cartesian products $B\times C$ and hence admit projections $B\otimes C\ra B$ and $B\otimes C\ra C$.  
Recall definitions of \textit{semilattices} and \textit{semilattice homomorphisms} in Appendix \S\ref{sec:preorders}.  

\begin{lem}
  \label{lem:convex.nerves}
  The two $\SEMILATTICES$-cubical functions
  \begin{equation}
    \label{eqn:convex.nerves}
    (\cn_{\restriction\SEMILATTICES})^{\otimes 2}\ra\cn_{\restriction\SEMILATTICES}:\SEMILATTICES\ra\PRESHEAVES\BOX
  \end{equation}
  defined by projection onto first and second factors are $\homotopic$-equivalent, where $\SEMILATTICES$ is the category of semilattices and semilattice homomorphisms.  
\end{lem}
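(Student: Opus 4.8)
The plan is to exhibit an explicit directed homotopy between the two projections $\pi_1,\pi_2:(\cn_{\restriction\SEMILATTICES})^{\otimes 2}\ra\cn_{\restriction\SEMILATTICES}$ by constructing a $\SEMILATTICES$-cubical function $H:(\cn_{\restriction\SEMILATTICES})^{\otimes 2}\otimes\BOX[1]\ra\cn_{\restriction\SEMILATTICES}$ restricting to $\pi_1$ along $-\otimes\BOX[\delta_-]$ and to $\pi_2$ along $-\otimes\BOX[\delta_+]$. Since $\BOX[1]=\cn[1]$ (the cubical nerve of the two-point lattice) and cubical nerves carry tensor products to products in the obvious way, the target of $H$ unwinds, on a semilattice $L$, to a cubical function $\cn(L\times L\times[1])\ra\cn L$, so it suffices to give such an $H$ componentwise and check naturality in $L$ and functoriality over $\BOX$. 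The natural candidate uses the semilattice operation: send a vertex $(x,y,\epsilon)$ of $\cn(L\times L\times[1])$ to $x$ if $\epsilon=0$ and to $x\meet_L y$ if $\epsilon=1$ — or rather, because $\cn$ is the nerve of a \emph{semilattice} (one binary idempotent commutative associative operation, here written $\meet_L$), a cube of $\cn(L\times L\times[1])$ of dimension $n$ is a monotone map from an abstract $n$-cube into $L\times L\times[1]$, i.e.\ a triple $(\xi,\upsilon,\delta)$ with $\xi,\upsilon$ cubes of $\cn L$ and $\delta$ a cube of $\BOX[1]$; I would send it to the cube of $\cn L$ given coordinatewise by $\xi$ on the face $\delta=0$ and by $\xi\meet_L\upsilon$ on the face $\delta=1$, interpolating via the operation. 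More precisely: the key point is that $\BOX[1]$ is the representable on $[1]$, a cube of it of dimension $n$ is a monotone $\delta:[1]^{\otimes n}\ra[1]$, and we define $H$ on the $n$-cube $(\xi,\upsilon,\delta)$ to be the $n$-cube $v\mapsto \xi(v)$ if $\delta(v)=0$ and $\xi(v)\meet_L\upsilon(v)$ if $\delta(v)=1$, which is monotone because $\meet_L$ is monotone (a semilattice homomorphism $L\times L\ra L$) and $\xi,\upsilon,\delta$ are; this is exactly the cube $\cn$ of the semilattice homomorphism $L\times L\times[1]\ra L$, $(x,y,\epsilon)\mapsto$ [the meet of $x$ with the $\epsilon$-fold... ] — equivalently, $(x,y,\epsilon)\mapsto m_\epsilon(x,y)$ where $m_0(x,y)=x$, $m_1(x,y)=x\meet_L y$, and one checks $m:\;L\times L\times[1]\ra L$ is a semilattice homomorphism (using that $[1]=\{0<1\}$ is a semilattice under $\min$ and that $x\meet_L y$ is the meet in the lattice structure supplied).

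Concretely, I would proceed in these steps. First, record the identification $\BOX[1]\cong\cn[1]$ and the resulting natural isomorphism $(\cn_{\restriction\SEMILATTICES})^{\otimes 2}\otimes\BOX[1]\cong\cn_{\restriction\SEMILATTICES}(-\times-\times[1])$, using that $\cn$ is monoidal from $(\PREORDEREDSETS,\times)$ in the relevant range and that $-\otimes-$ of cubical sets sits inside $-\times-$ — this reduces the problem to producing a $\SEMILATTICES$-natural family of $\PREORDEREDSETS$-morphisms $h_L:L\times L\times[1]\ra L$. Second, define $h_L(x,y,\epsilon)=x$ for $\epsilon=0$ and $h_L(x,y,\epsilon)=x\meet_L y$ for $\epsilon=1$, and verify that $h_L$ is a semilattice homomorphism: the source is a semilattice under the pointwise $\meet$ (with $[1]$ under $\min$), and $h_L((x,y,\epsilon)\meet(x',y',\epsilon'))=h_L(x\meet x',\,y\meet y',\,\min(\epsilon,\epsilon'))$ equals $h_L(x,y,\epsilon)\meet_L h_L(x',y',\epsilon')$ by a short case check on the four values of $(\epsilon,\epsilon')$, using idempotence, commutativity and associativity of $\meet_L$ and the absorption implicit in $x\meet_L(x'\meet_L y')=\cdots$. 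Naturality of $h_L$ in $L\in\SEMILATTICES$ is immediate since semilattice homomorphisms commute with $\meet$. Third, apply $\cn$ and transport across the isomorphism of step one to obtain a $\SEMILATTICES$-cubical function $H$ on $(\cn_{\restriction\SEMILATTICES})^{\otimes 2}\otimes\BOX[1]$; precompose with $-\otimes\BOX[\delta_-]$ and $-\otimes\BOX[\delta_+]$ and observe that these correspond, under step one, to $\cn$ applied to $L\times L\hookrightarrow L\times L\times[1]$ hitting $\epsilon=0$ and $\epsilon=1$ respectively, whence $H$ restricts to $\cn$ of $(x,y)\mapsto x$, i.e.\ $\pi_1$, and to $\cn$ of $(x,y)\mapsto x\meet_L y$. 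The latter is not literally $\pi_2$, so, fourth, I would symmetrically build a second homotopy $H'$ from the semilattice homomorphism $(x,y,\epsilon)\mapsto$ ($y$ for $\epsilon=0$, $x\meet_L y$ for $\epsilon=1$), giving $\pi_2\futurehomotopic \cn(\meet_L)$, and then chain: $\pi_1\futurehomotopic\cn(\meet_L)\reflectbox{$\futurehomotopic$}\,\pi_2$, which shows $\pi_1\homotopic\pi_2$ since $\homotopic$ is by definition the equivalence relation generated by $\futurehomotopic$.

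The main obstacle, and the only place any real care is needed, is the bookkeeping in the first step: verifying that the tensor $-\otimes\BOX[1]$ of cubical sets interacts with $\cn$ so that $(\cn_{\restriction\SEMILATTICES})^{\otimes 2}\otimes\BOX[1]$ is genuinely (naturally) $\cn$ of the product preordered set $L\times L\times[1]$, and that the face inclusions $-\otimes\BOX[\delta_\pm]$ match $\cn$ of the evident inclusions $L\times L\rightrightarrows L\times L\times[1]$. This is where one must be slightly delicate about $\otimes$ versus $\times$ for cubical sets, but it follows from the standing fact (used without comment in the excerpt) that $\cn$ sends products of the relevant ordinals compatibly and that $\BOX[1]=\cn[1]$ represents the interval; once that identification is in hand, every remaining verification is an elementary semilattice computation. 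I would flag that, unlike in Lemma~\ref{lem:primordial.homotopy}, we cannot appeal to linear interpolation since $\cn L$ is purely combinatorial — the role of convex interpolation is played here by the idempotent operation $\meet_L$, which supplies a single "halfway" stage sufficient to connect the two projections through $\cn(\meet_L)$.
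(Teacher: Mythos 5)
Your overall strategy is the paper's: use the idempotent operation $\meet_L$ to manufacture an intermediate map $\cn(\meet_L)$ and connect both projections to it by applying $\cn$ to a map $L\times L\times[1]\ra L$ and restricting to the tensor subpresheaf. But the homotopy you write down goes the wrong way, and this is fatal as stated. You set $h_L(x,y,0)=x$ and $h_L(x,y,1)=x\meet_Ly$. Monotonicity of $h_L$ in the $[1]$-coordinate requires $h_L(x,y,0)\leqslant_L h_L(x,y,1)$, i.e.\ $x\leqslant_L x\meet_Ly$, which fails whenever $x\not\leqslant_L y$ (take two incomparable elements of any semilattice). So $h_L$ is not a morphism of $\PREORDEREDSETS$, $\cn h_L$ is undefined, and your cube-wise description does not produce cubes of $\cn L$. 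Your auxiliary claim that $h_L$ is a semilattice homomorphism is also false — at $(\epsilon,\epsilon')=(0,1)$ one side is $x\meet_Lx'$ and the other is $x\meet_Lx'\meet_Ly'$ — but that claim is not needed anyway: all that is required is that each $h_L$ be monotone and that the family be natural with respect to semilattice homomorphisms (which follows since $h_L$ is built from projections and $\meet_L$).

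The repair is exactly the paper's choice of orientation: put the meet at $\epsilon=0$ and the projection at $\epsilon=1$, i.e.\ $\eta_X(x,y,0)=x\meet_Xy$ and $\eta_X(x,y,1)=x$. This \emph{is} monotone, because $x\meet_Xy\leqslant_X x\leqslant_X x'$ handles the mixed case $(\epsilon,\epsilon')=(0,1)$. One then gets directed homotopies $\cn(\meet)\futurehomotopic\pi_1$ and $\cn(\meet)\futurehomotopic\pi_2$ rather than the chain you proposed; since $\homotopic$ is the equivalence relation \emph{generated} by $\futurehomotopic$, this cospan still yields $\pi_1\homotopic\pi_2$. Two smaller points of bookkeeping: the identification $\BOX[1]\cong\cn[1]$ you invoke in your first step is false ($\PREORDEREDSETS(\boxobj{2},[1])$ contains $\min$, which is not a $\BOX$-morphism), and the paper sidesteps this by simply restricting $\cn\eta:\cn(X^2\times[1])\ra\cn X$ to the subpresheaf $S^{2}\otimes\BOX[1]$; and the statement concerns the tensor-product projections, which the paper handles by noting they are restrictions of the Cartesian projections $S^2\ra S$. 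In a directed homotopy theory the orientation of the interval is precisely the point, so the monotonicity check you skipped is the entire content of the lemma.
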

\begin{proof}
  Let $S$ be the $\SEMILATTICES$-cubical set $\cn_{\restriction\SEMILATTICES}$.
  Let $\pi_1,\pi_2$ be the $\SEMILATTICES$-cubical functions $S^{2}\ra S$ defined by projection onto first and second factors.
  It suffices to show $\pi_1\homotopic\pi_2$ because both projections of the form $S^{\otimes 2}\ra S$ are restrictions of $\pi_1,\pi_2:S^2\ra S$.

  The function $\eta_{X}:X^2\times[1]\ra X$, natural in $\SEMILATTICES$-objects $X$ and defined by
  $$\eta_{X}(\epsilon_1,\epsilon_2,\varepsilon)=\begin{cases}\epsilon_1\meet_{X}\epsilon_2,&\varepsilon=0\\ \epsilon_1,&\varepsilon=1\end{cases},$$
  is monotone. 
  Hence we can construct a directed homotopy
  $$(\cn\eta)_{\restriction S^2\otimes\BOX[1]}:S^{2}\otimes\BOX[1]\ra S.$$
  from $\cn\eta(-_1,-_2,0)$ to $\pi_1$.
  Similarly $\cn\eta(-_1,-_2,0)\futurehomotopic\pi_2$.
\end{proof}

\begin{defn}
  We write $\HOMOTOPY\PRESHEAVES\BOX^{\DIAGRAM}$ for the quotient category
  $$\HOMOTOPY\PRESHEAVES\BOX^{\DIAGRAM}=\PRESHEAVES\BOX^{\DIAGRAM}/\homotopic.$$
\end{defn}

\subsection{Main results}\label{subsec:main}
We prove that the directed homotopy theories of streams and cubical sets, defined previously, are equivalent.
Thus we argue that directed homotopy types describe tractable bits of information about the dynamics of processes from their streams of states.  
We fix a small category $\DIAGRAM$.  

\begin{lem}
  \label{lem:tri.equivalence}
  The $\PRESHEAVES\BOX$-stream maps
  $$\direalize{\epsilon_{\tri}}\;:\;\direalize{\tri\circ\qua\circ\tri -}\;\leftrightarrows\;\direalize{\tri-}\;:\;\direalize{\tri\;\eta}$$
  are inverses up to $\homotopic$, where $\eta$ and $\epsilon$ are the respective unit and counit of the adjunction $\tri\vdash\qua$.  
\end{lem}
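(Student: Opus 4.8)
The plan is to dispose of one composite by a triangle identity and to reduce the other, via Proposition~\ref{prop:tri.direalize}, to a directed contractibility statement about cubical nerves of the lattices $\boxobj{n}$.

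First, applying $\direalize{-}$ to the triangle identity $\epsilon_{\tri B}\circ\tri\eta_B=\id_{\tri B}$ shows that $\direalize{\epsilon_\tri}\circ\direalize{\tri\eta}$ is already the identity of $\direalize{\tri-}$, naturally in $B$. It remains to show that $\direalize{\tri\eta}\circ\direalize{\epsilon_\tri}$, the natural endomorphism $B\mapsto\direalize{\tri\eta_B\circ\epsilon_{\tri B}}$ of the $\PRESHEAVES\BOX$-stream $\direalize{\tri\qua\tri-}$, is $\homotopic$ to the identity. Using the natural isomorphism $\direalize{\tri-}\cong\direalize{-}$ of Proposition~\ref{prop:tri.direalize} --- and its naturality in the cubical function $\eta_B:B\ra\qua\tri B$ --- I transport this endomorphism to $\direalize{\eta_B}\circ p_B$ on $\direalize{\qua\tri B}$, where $p_B:\direalize{\qua\tri B}\ra\direalize B$ is the transport of $\direalize{\epsilon_{\tri B}}$; the transported triangle identity gives $p_B\circ\direalize{\eta_B}=\id_{\direalize B}$. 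So it suffices to build a zigzag of directed homotopies, natural in $B$, from $\id_{\direalize{\qua\tri B}}$ to $\direalize{\eta_B}\circ p_B$.

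By Lemma~\ref{lem:qt.local.cn}, $\qua\tri B\cong\int_{\BOX}^{\boxobj{n}}B(\boxobj{n})\cdot\cn\boxobj{n}$, with $\eta_B$ induced by the inclusions $\eta_{\BOX\boxobj{n}}:\BOX\boxobj{n}\ira\cn\boxobj{n}=\qua\tri\BOX\boxobj{n}$ of the tautological $n$-cube and $\epsilon_{\tri B}$ induced by the counits $\epsilon_{\sn\boxobj{n}}:\tri\cn\boxobj{n}\ra\sn\boxobj{n}$; since $\direalize{-}$, $\qua\circ\tri$ (Lemma~\ref{lem:qt.cocontinuous}) and $-\times\vec\I$ preserve colimits, this reduces the task to constructing a zigzag of directed homotopies from $\id_{\direalize{\cn\boxobj{n}}}$ to $\direalize{\eta_{\BOX\boxobj{n}}}\circ q_n$ which is natural in $\boxobj{n}\in\BOX$ (here $q_n$ transports $\epsilon_{\sn\boxobj{n}}$); gluing over the coend then yields the required zigzag on $\direalize{\qua\tri-}$. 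Now $\cn$ preserves finite products, every $\BOX$-morphism is a $\meet$-homomorphism of the lattices $\boxobj{n}=[1]^{\times n}$, and $\direalize{-}$ sends $\otimes$ to $\times$ and preserves finite products, so $\direalize{\cn\boxobj{n}}\cong\direalize{\cn[1]}^{\,n}$, $\direalize{\BOX\boxobj{n}}\cong\vec\I^{\,n}$, and $\direalize{\eta_{\BOX\boxobj{n}}}\circ q_n$ is the $n$-fold product of the case $n=1$. Thus I reduce to producing a directed homotopy on $\direalize{\cn[1]}$ from $\id$ to $\direalize{\eta_{\BOX[1]}}\circ q_1$ that fixes the two vertices $0$ and $1$: any vertex-fixing directed homotopy on $\direalize{\cn[1]}$ has $n$-fold external products compatible with the face and degeneracy $\BOX$-morphisms that generate $\BOX$, hence $\BOX$-natural, and these glue over the coend. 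The homotopy itself I would extract from the directed contractions that the lattice structure of $[1]$ provides through Lemma~\ref{lem:convex.nerves}.

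I expect the construction of that last, vertex-fixing directed homotopy to be the main obstacle. The obvious directed contractions of $\direalize{\cn[1]}$ --- meeting every cube with the bottom vertex, or joining with the top --- each move the opposite endpoint, so they are unsuitable as they stand; I anticipate having to splice a ``meet toward $0$ below the spine / join toward $1$ above the spine'' homotopy along the arc $\direalize{\BOX[1]}$, or else to exploit the cubical composition carried by $\cn[1]=\cn([1])$ as the cubical nerve of a category to fill the relevant cells, and then to verify $\BOX$-naturality of the resulting $n$-fold products. Everything else --- the triangle identities, the coend bookkeeping, and the appeals to Proposition~\ref{prop:tri.direalize} and Lemma~\ref{lem:qt.local.cn} --- is routine.
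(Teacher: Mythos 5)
Your first half (the triangle identity gives $\direalize{\epsilon_\tri}\circ\direalize{\tri\eta}=\id$) and your reduction via Lemma~\ref{lem:qt.local.cn} to the representable case $\qua\tri\BOX\boxobj{n}\cong\cn\boxobj{n}$ match the paper. But the remaining half --- that the other composite is $\homotopic$ to the identity --- is exactly the part you leave unconstructed, and the route you sketch for it is both harder than necessary and unverified. You reduce to producing a \emph{vertex-fixing} directed homotopy (zigzag) on $\direalize{\cn[1]}$ from the identity to the retraction onto $\direalize{\BOX[1]}$, so that $n$-fold products are compatible with the face maps $\BOX[0]\ra\BOX[1]$; you then acknowledge that the obvious meet/join contractions move an endpoint and that some splicing ``below/above the spine'' would be needed. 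That splicing is the entire content of the claim at this point, and it is not carried out; as written the proof has a hole precisely where the work is.

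The paper avoids any rel-vertices construction. Lemma~\ref{lem:convex.nerves} says the two projections $(\cn_{\restriction\SEMILATTICES})^{\otimes 2}\ra\cn_{\restriction\SEMILATTICES}$ are $\homotopic$-equivalent, naturally in semilattices; since $\qua\tri\BOX[-]\cong\cn_{\restriction\BOX}$ and $\direalize{-}$ sends $\otimes$ to $\times$, the two product projections $\direalize{\tri\qua\tri\BOX[-]}^2\ra\direalize{\tri\qua\tri\BOX[-]}$ are $\homotopic$-equivalent as $\BOX$-stream maps. Consequently \emph{any} two $\BOX$-stream maps $f,g$ from a common domain into $\direalize{\tri\qua\tri\BOX[-]}$ satisfy $f=\pi_1\circ(f,g)\homotopic\pi_2\circ(f,g)=g$; applying this to $\direalize{\tri\eta_{\BOX[-]}}\circ\direalize{\epsilon_{\tri\BOX[-]}}$ and the identity finishes the representable case, and naturality plus Lemma~\ref{lem:qt.local.cn} gives the general case. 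Note that this argument is automatically natural in $\BOX$ because Lemma~\ref{lem:convex.nerves} already is; no vertex-fixing and no product-of-homotopies bookkeeping is needed. You cite Lemma~\ref{lem:convex.nerves} only as a source of ``directed contractions'' to be spliced by hand, which is the wrong way to deploy it; the correct deployment is the two-projections trick above. Until the homotopy you defer is actually produced (or replaced by this argument), the proof is incomplete.
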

\begin{proof}
  The left hand side is a retraction of the right hand side by a zig-zag identity.
  It therefore suffices to show that the right side is a left inverse to the left side up to $\homotopic$.  

  Both projections of the form $(\cn_{\restriction\BOX})^{\otimes 2}\ra\cn_{\restriction\BOX}$ are $\homotopic$-equivalent as $\BOX$-cubical functions [Lemma \ref{lem:convex.nerves}], hence both projections of the form $\qua\circ\tri\circ\BOX[-]^{\otimes 2}\ra\qua\circ\tri\circ\BOX[-]$ are $\homotopic$-equivalent by $\qua\circ\tri\circ\BOX[-]\cong\cn_{\restriction\BOX}$, hence both projections of $\BOX$-streams of the form
  $$\direalize{\tri\circ\qua\circ\tri\circ\BOX[-]}^2\ra\direalize{\tri\circ\qua\circ\tri\circ\BOX[-]}$$
  are $\homotopic$-equivalent because $\direalize{-}:\PRESHEAVES\BOX\ra\STREAMS$ sends tensor products to binary Cartesian products, hence all $\BOX$-stream maps from the same domain to $\direalize{\tri\circ\qua\circ\tri\circ\BOX[-]}$ are $\homotopic$-equivalent, hence $\direalize{\tri\;\eta_{\BOX[-]}}$ is a left inverse to $\direalize{\epsilon_{\tri\BOX[-]}}$ up to $\homotopic$, and hence $\direalize{\tri\eta}$ is a left inverse to $\direalize{\epsilon_{\tri}}$ up to $\homotopic$ [Lemma \ref{lem:qt.local.cn}]. 
\end{proof}

The second lemma allows us to naturally lift certain stream maps, up to an approximation, to hypercubes.

\begin{lem}
  \label{lem:bamfl}
  Consider $\DIAGRAM$-cubical set $C$ and $\DIAGRAM$-stream map
  $$f:X\ra\direalize{\cd^4C}$$
  such that $X(g)\neq\varnothing$ and $f_g(X(g))$ lies in an open star of a vertex of $\cd^4C(g)$ for each $g$.  
  There exist 
  \begin{enumerate}
    \item functor $R:\DIAGRAM\ra\BOX$
    \item object-wise monic $\DIAGRAM$-cubical function $\iota:\BOX[R-]\ra\cd^2C$
    \item $\DIAGRAM$-stream map $f':X\ra\direalize{\BOX[R-]}$
  \end{enumerate}
  such that $\direalize{(\bar\gamma\gamma)_C\iota}f'=\direalize{(\gamma\bar\gamma)^2_C}f$.
\end{lem}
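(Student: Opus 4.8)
The plan is to reduce the statement, via the support construction, to a purely local problem about a single hypercube and then solve that local problem explicitly using the folding lemma for double cubical subdivision. First I would fix, for each $g$, the vertex $v_g$ of $\cd^4C(g)$ whose open star contains $f_g(X(g))$; the hypothesis $X(g)\neq\varnothing$ guarantees such a $v_g$ exists, and naturality of $f$ forces the assignment $g\mapsto v_g$ to be compatible with the $\DIAGRAM$-action in the sense that each $C(\gamma)$ carries $v_{g'}$ into the closed star of $v_{g''}$. Set $A_g=\supp_{|-|}(f_g(X(g)),\Star_{\cd^4C(g)}\langle v_g\rangle)$; by Lemma \ref{lem:atomic.supports} (applied to $|-|\circ\direalize{-}$, which preserves coproducts, monos, epis and intersections) each $A_g$ is the image of a representable, i.e. atomic, and $f$ corestricts to a $\DIAGRAM$-stream map $X\ra\direalize{\Star_{\cd^4C}\langle v_{-}\rangle}$ landing in the open star, using Theorem \ref{thm:direalization.preserves.embeddings} to see that the relevant inclusion is a stream embedding.

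Next I would apply Lemma \ref{lem:cd.fold} twice. Think of $\cd^4C=\cd^2(\cd^2C)$. For the atomic subpresheaf $\langle v_g\rangle$ — or rather a chosen atomic $A'_g\subset \Star_{\cd^4C(g)}\langle v_g\rangle$ containing the support — Lemma \ref{lem:cd.fold} (with ``$C$'' there taken to be $\cd^2C(g)$) produces a unique minimal $B_g\subset\cd^2C(g)$ with $A'_g\cap\cd^2B_g\neq\varnothing$, a retraction onto $A'_g\cap\cd^2B_g$, and, crucially, the last sentence of that lemma tells us $A'_g\cap\cd^2B_g$ is \emph{representable}. Lemma \ref{lem:cd.star.collapse} controls where $(\gamma\bar\gamma)_{\cd^2C(g)}$ sends the whole star: into $\supp_{\cd^2}(\langle v_g\rangle,\cd^2C(g))$, which is again representable by Lemma \ref{lem:atomic.supports}. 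Composing, $(\gamma\bar\gamma)^2_C$ collapses $\Star_{\cd^4C(g)}\langle v_g\rangle$ into an atomic, hence representable, subpresheaf of $C(g)$; Lemma \ref{lem:natural.cd.folding} (in its $\cd$-version, which is what Lemma \ref{lem:cd.fold}'s neighbours establish) guarantees the retractions assemble into a $\DIAGRAM$-cubical function. This gives the functor $R:\DIAGRAM\ra\BOX$ sending $g$ to the box object representing $\supp_{\cd^2}(\langle v_g\rangle,\cd^2C(g))\subset\cd^2C(g)$ — functoriality on morphisms comes from the uniqueness clauses in Lemmas \ref{lem:cd.fold} and \ref{lem:natural.cd.folding} together with the fact (Lemma \ref{lem:cubical.extrema}) that extrema-preserving injective box maps are determined by their values on extrema — and the object-wise monic $\iota:\BOX[R-]\ra\cd^2C$.

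Finally I would define $f'$. Apply $\direalize{-}$ to the $\DIAGRAM$-cubical retraction $\pi:\Star_{\cd^4C}\langle v_{-}\rangle\ra\Star_{\cd^4C}\langle v_{-}\rangle\cap\cd^2 B_{-}$ supplied by (the $\cd$-analogue of) Lemma \ref{lem:cd.fold}, precompose with the corestriction of $f$ constructed in the first paragraph, and postcompose with the canonical identification of $A'_g\cap\cd^2B_g$ (representable) with $\BOX[R g]$ after one more application of the collapse; concretely, $f'_g=\direalize{\text{(chosen iso)}}\circ\direalize{\pi_g}\circ (\text{corestriction of }f_g)$. The displayed identity $\direalize{(\bar\gamma\gamma)_C\iota}f'=\direalize{(\gamma\bar\gamma)^2_C}f$ is then exactly the commuting square (\ref{eqn:cd.fold}) of Lemma \ref{lem:cd.fold} — applied at each level and once for each of the two subdivisions — after applying the functor $\direalize{-}$; the left-hand vertical of that square is the retraction $\pi$ and the right-hand vertical is an identity, which is precisely what forces $(\gamma\bar\gamma)^2$ on the nose. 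The main obstacle, and where the argument needs genuine care rather than bookkeeping, is \emph{naturality}: showing that the choices $v_g$, $B_g$, the representing box objects, and the isomorphisms $A'_g\cap\cd^2B_g\cong\BOX[Rg]$ can all be made simultaneously in $g$ so that $R$ is an honest functor and $\iota$, $f'$ are honest natural transformations. This is handled by invoking the uniqueness statements (of minimal subpresheaves, of retractions onto atomic presheaves, of extrema-preserving injections between box objects) at every stage — each individual choice is forced, so compatibility across morphisms of $\DIAGRAM$ is automatic — but assembling these uniqueness observations into a clean inductive naturality argument, as in the proof of Lemma \ref{lem:qt.cocontinuous}, is the technical heart of the proof.
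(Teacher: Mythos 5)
Your proposal follows the same broad strategy as the paper's proof (collapse via $\gamma\bar\gamma$, invoke Lemmas \ref{lem:cd.star.collapse} and \ref{lem:cd.fold} to land in a representable subpresheaf of $\cd^2C$, assemble naturally via Lemma \ref{lem:natural.cd.folding}), but two steps in the middle are genuinely broken, and they concern exactly the point the lemma is delicate about.

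First, the atomic subpresheaf $A'_g\subset\Star_{\cd^4C(g)}\langle v_g\rangle$ ``containing the support'' of $f_g(X(g))$ need not exist. The set $f_g(X(g))$ can fill the entire open star of $v_g$, in which case $\supp_{|-|}(f_g(X(g)),\cd^4C(g))$ is the whole closed star, a union of several atomic pieces; and Lemma \ref{lem:atomic.supports} does not give atomicity of your $A_g$, because $f_g(X(g))$ is not an atomic subobject in $\SETS$ (it is not a singleton). The correct order of operations is to push forward along $(\gamma\bar\gamma)_{\cd^2C(g)}:\cd^4C(g)\ra\cd^2C(g)$ \emph{first}: Lemma \ref{lem:cd.star.collapse}, applied with its ``$C$'' equal to $\cd^2C(g)$, collapses the whole star into $\supp_{\cd^2}(\langle v_g\rangle,\cd^2C(g))$, which is atomic. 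Only after this collapse is there an atomic $A(g)\subset\cd^2C(g)$ containing the image of $f_g(X(g))$, to which Lemma \ref{lem:cd.fold} can be applied (with its ``$C$'' equal to $C(g)$, yielding $B(g)\subset C(g)$).

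Second, you conflate ``atomic'' with ``representable,'' and this kills the monicity of $\iota$. You take $R(g)$ to be the box object representing $\supp_{\cd^2}(\langle v_g\rangle,\cd^2C(g))$; but an atomic cubical set is only the \emph{epi image} of a representable, and that epi can identify faces (consider atomic subpresheaves of $\cd^2(\BOX\boxobj{2}/\partial\BOX\boxobj{2})$ adjacent to the collapsed boundary), so $\BOX[Rg]\ra\cd^2C(g)$ need not be monic. The representable subobject the lemma needs is $A(g)\cap\cd^2B(g)$, and the ``Moreover'' clause of Lemma \ref{lem:cd.fold} is precisely what guarantees its representability; your version instead forms $A'_g\cap\cd^2B_g$ inside $\cd^4C(g)$ --- one subdivision level too high for the codomain $\cd^2C$ of $\iota$ --- and then asserts an unexplained ``identification'' with an object inside $\cd^2C(g)$. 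Relatedly, $v_g$ is not canonical (open stars of distinct vertices overlap), so naturality of $g\mapsto v_g$ is not forced by naturality of $f$; the paper works instead with the canonical support of the collapsed image and explicitly proves that $A(g)\cap\cd^2B(g)$ is independent of the choice of minimal atomic $A(g)$.
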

\begin{proof}
  Let $S$ be the $\DIAGRAM$-subpresheaf of $\cd^2C$ such that for each $g$,
  $$S(g)=\supp_{|-|}(|(\gamma\bar\gamma)_{\cd^2C(g)}|f(X(g)),\cd^2C(g)).$$  
  
  Fix $g$.
  There exists a minimal atomic subpresheaf $A(g)$ of $\cd^2C(g)$ containing $S(g)$ [Lemma \ref{lem:cd.star.collapse}], and hence there exist minimal subpresheaf $B(g)$ of $C(g)$ such that $A(g)\cap\cd^2B(g)\neq\varnothing$ and unique retraction $A(g)\ra A(g)\cap\cd^2B(g)$ [Lemma \ref{lem:cd.fold}].  
  We claim that $A(g)\cap\cd^2B(g)$ is independent of our choice of $A(g)$. 
  To see the claim, it suffices to consider the case that there exist distinct possible choices $A'(g)$ and $A''(g)$ of $A(g)$; then $A'(g)\cap A''(g)=A'(g)\cap\cd^2\supp_{\cd^2}(A''(g),C(g))=A''(g)\cap\cd^2\supp_{\cd^2}(A'(g),C(g))$, and hence $B(g)$ is independent of the choices $A(g)=A'(g),A''(g)$ and $A'(g)\cap\cd^2B(g)=A''(g)\cap\cd^2B(g)=A'(g)\cap A''(g)\cap\cd^2B(g)$ by $B(g)$ minimal.

  The assignment $g\mapsto A(g)\cap\cd^2B(g)$ extends to a functor $A\cap\cd^2B:\DIAGRAM\ra\PRESHEAVES\BOX$ such that suitable restrictions of retractions $A(g)\ra A(g)\cap\cd^2B(g)$ define a $\DIAGRAM$-cubical function $\pi:S\ra A\cap\cd^2B$ by an application of Lemma \ref{lem:natural.cd.folding}.
  The $\DIAGRAM$-cubical set $A\cap\cd^2B$ is of the form $\BOX[R-]$ up to natural isomorphism [Lemma \ref{lem:cd.fold}].  
  The $\DIAGRAM$-function $|(\gamma\bar\gamma)_{\cd^2C}|f:X\ra|\cd^2C|$ corestricts to a function of the form $X\ra|S|$ [Lemma \ref{lem:cd.fold}] and hence the $\DIAGRAM$-stream map $\direalize{(\gamma\bar\gamma)_{\cd^2C}}f:X\ra\direalize{\cd^2C}$ corestricts to a $\DIAGRAM$-stream map $f'':X\ra\direalize{S}$ [Theorem \ref{thm:direalization.preserves.embeddings}].  
  The object-wise inclusion $A\cap\cd^2B\ira\cd^2C$ and $\DIAGRAM$-stream map $\direalize{\pi}f''$ are isomorphic to our desired $\iota$ and $f'$.  
\end{proof}

\begin{prop}
  \label{prop:sd.natural.approx}
  The following $\PRESHEAVES\DEL$-stream maps are $\homotopic$-equivalent.
  \begin{equation*}
    \direalize{\gamma}\;\homotopic\;\direalize{\bar\gamma}\;\homotopic\shomeo:\;\direalize{\sd-}\ra\direalize{-}\;:\PRESHEAVES\DEL\ra\STREAMS.
  \end{equation*}
\end{prop}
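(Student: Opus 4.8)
The plan is to restrict to representable simplicial sets, where the common target $\direalize{\DEL[n]}$ is a compact convex sublattice of a topological vector space, and then to quote Lemma \ref{lem:primordial.homotopy}.

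The first step is a reduction to representables. The functors $\direalize{\sd-},\direalize{-}:\PRESHEAVES\DEL\ra\STREAMS$ are cocontinuous ($\sd$ and $\direalize{-}$ both are), and so is $\direalize{\sd-}\times_{\STREAMS}\vec\I$ since $-\times_{\STREAMS}\vec\I$ is a left adjoint [Theorem \ref{thm:x.closed}]. A cocontinuous functor $\PRESHEAVES\DEL\ra\STREAMS$ is the cocontinuous extension of its restriction to the representables $\DEL[n]$, and a natural transformation between two such functors is determined by its components at the $\DEL[n]$'s; the same holds, therefore, for a directed homotopy $\direalize{\sd-}\times_{\STREAMS}\vec\I\ra\direalize{-}$ and for the conditions prescribing its $i_0$- and $i_1$-restrictions. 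It thus suffices to relate $\direalize{\gamma}_{\DEL[n]},\,\direalize{\bar\gamma}_{\DEL[n]},\,\shomeo_{\DEL[n]}:\direalize{\sd\DEL[n]}\ra\direalize{\DEL[n]}$ by a zig-zag of directed homotopies, viewed as natural transformations of functors from the category of representable simplicial sets to $\STREAMS$.

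The second step identifies this restricted target. Under the identification of a simplicial set with a preordered-set-valued presheaf [Definition \ref{defn:preordered.presheaves}], $\DEL[n]$ inherits the lattice structure of $[n]$, so $\direalize{\DEL[n]}$ is a compact Hausdorff connected topological lattice and each $\direalize{\DEL[\phi]}$, for $\phi:[m]\ra[n]$ monotone, is a linear lattice homomorphism --- monotone maps between the totally ordered sets $[m]$ and $[n]$ being automatically lattice homomorphisms. Realizing the lattice embedding $[n]\ira[1]^{\times n}$ onto the chain $v_0\leqslant\cdots\leqslant v_n$ of $0$-$1$ tuples, and using that $\direalize{-}$ preserves finite products [Theorem \ref{thm:direalization.preserves.finite.products}], so that $\direalize{(\DEL[1])^{\times n}}=\vec\I^{\times n}$, exhibits $\direalize{\DEL[n]}$, naturally in $[n]$, as the order simplex $S_n=\{(t_1,\ldots,t_n):1\geqslant t_1\geqslant\cdots\geqslant t_n\geqslant 0\}$, which is a compact Hausdorff connected convex subspace of the lattice-ordered topological vector space $\R^n$ that is closed under coordinate-wise join and meet, hence a sublattice [Lemma \ref{lem:lattices} and the classical fact that monics of simplicial sets realize to closed embeddings]. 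Thus $\direalize{-}$, restricted to representable simplicial sets, satisfies the hypotheses imposed on the codomain in Lemma \ref{lem:primordial.homotopy}.

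The last step is to conclude. By Lemma \ref{lem:primordial.homotopy}, any two natural transformations $\direalize{\sd-}\ra\direalize{-}$ of functors on representable simplicial sets are $\homotopic$-equivalent; in particular the restrictions of $\direalize{\gamma}$, $\direalize{\bar\gamma}$ and $\shomeo$ are pairwise $\homotopic$-equivalent, witnessed by a zig-zag of directed homotopies which re-extends, by the first step, to a zig-zag of directed homotopies of $\PRESHEAVES\DEL$-stream maps. This yields $\direalize{\gamma}\homotopic\direalize{\bar\gamma}\homotopic\shomeo$. The one nonformal point is the second step --- presenting $\direalize{\DEL[n]}$ functorially as a convex sublattice of a lattice-ordered topological vector space so that Lemma \ref{lem:primordial.homotopy} applies; the passage to and from representables is routine, using only cocontinuity and the Cartesian closure of $\STREAMS$.
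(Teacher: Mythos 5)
Your proof is correct and follows essentially the same route as the paper: restrict to the representables $\DEL[n]$, verify that $\direalize{\DEL[-]}$ lands objectwise in compact connected convex sublattices of lattice-ordered topological vector spaces with linear lattice homomorphisms as structure maps, apply Lemma \ref{lem:primordial.homotopy}, and extend back to all simplicial sets by cocontinuity and naturality. The paper's proof is a two-line version of exactly this argument; your second step simply makes explicit the verification of the hypotheses of Lemma \ref{lem:primordial.homotopy} that the paper leaves implicit.
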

\begin{proof}
  The $\DEL$-stream maps
  $$\direalize{\gamma_{\DEL[-]}}\homotopic\direalize{\bar\gamma_{\DEL[-]}}\homotopic\shomeo_{\DEL[-]}:\DEL\ra\STREAMS$$
  by Lemma \ref{lem:primordial.homotopy}.  
  The general claim follows from naturality.
\end{proof}

A cubical analogue follows [Propositions \ref{prop:sd.cd.tri} and \ref{prop:sd.natural.approx}].

\begin{cor}
  \label{cor:cd.natural.approx}
  The following $\PRESHEAVES\BOX$-stream maps are $\homotopic$-equivalent.
  \begin{equation*}
    \direalize{\gamma}\;\homotopic\;\direalize{\bar\gamma}\;\homotopic\chomeo:\;\direalize{\cd-}\ra\direalize{-}\;:\PRESHEAVES\BOX\ra\STREAMS.
  \end{equation*}
\end{cor}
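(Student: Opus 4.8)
The plan is to reduce the cubical statement to the simplicial statement already proved in Proposition \ref{prop:sd.natural.approx}, transporting everything through the triangulation functor $\tri$ and the natural isomorphisms of Propositions \ref{prop:sd.cd.tri} and \ref{prop:tri.direalize}. The three $\PRESHEAVES\BOX$-stream maps in question, $\direalize{\gamma}$, $\direalize{\bar\gamma}$, and $\chomeo$, all have the form $\direalize{\cd-}\ra\direalize{-}$, and by Proposition \ref{prop:tri.direalize} the natural isomorphism $\direalize{-}\cong\direalize{\tri-}$ on $\PRESHEAVES\BOX$ identifies $\direalize{\cd-}$ with $\direalize{\tri\circ\cd-}$ and $\direalize{-}$ with $\direalize{\tri-}$. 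Under this identification, Proposition \ref{prop:sd.cd.tri} gives a natural isomorphism $\tri\circ\cd\cong\sd\circ\tri$ compatible with $\tri\gamma$, $\tri\bar\gamma$ and the structure maps $\gamma\tri$, $\bar\gamma\tri$; so the cubical maps $\direalize{\gamma}$, $\direalize{\bar\gamma}$ become identified with $\direalize{\gamma\tri}$, $\direalize{\bar\gamma\tri}$ (up to the relevant natural isomorphisms), and $\chomeo$ becomes identified with $\shomeo_{\tri-}$ by the very definition of $\chomeo$.

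First I would write out, as a commuting (up to natural isomorphism) diagram, how each of $\direalize{\gamma}$, $\direalize{\bar\gamma}$, $\chomeo : \direalize{\cd-}\ra\direalize{-}$ corresponds to $\direalize{\gamma\tri}$, $\direalize{\bar\gamma\tri}$, $\shomeo_{\tri-} : \direalize{\sd\tri-}\ra\direalize{\tri-}$; this is just bookkeeping with the three named natural isomorphisms. Second, I would invoke Proposition \ref{prop:sd.natural.approx}, which asserts precisely that $\direalize{\gamma}\homotopic\direalize{\bar\gamma}\homotopic\shomeo$ as natural transformations $\direalize{\sd-}\ra\direalize{-}$ on $\PRESHEAVES\DEL$; restricting (or rather precomposing) along the functor $\tri:\PRESHEAVES\BOX\ra\PRESHEAVES\DEL$ yields $\direalize{\gamma\tri}\homotopic\direalize{\bar\gamma\tri}\homotopic\shomeo_{\tri-}$ as natural transformations of $\PRESHEAVES\BOX$-streams — one just whiskers the $\homotopic$-homotopies by $\tri$, which is legitimate since $\homotopic$ is defined componentwise and $\tri$ supplies objects to plug in. Finally, transporting this chain of $\homotopic$-equivalences back along the natural isomorphisms identifies it with $\direalize{\gamma}\homotopic\direalize{\bar\gamma}\homotopic\chomeo$ on $\PRESHEAVES\BOX$, which is the claim; here I would note that conjugating a directed homotopy by a stream isomorphism on either end produces a directed homotopy, so $\homotopic$-equivalence is preserved under such transport.

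The main obstacle, such as it is, is purely organizational: making sure the three natural isomorphisms from Propositions \ref{prop:sd.cd.tri} and \ref{prop:tri.direalize} and the definition of $\chomeo$ are pasted together in a genuinely commuting fashion, so that the correspondence $\direalize{\gamma}\leftrightarrow\direalize{\gamma\tri}$ (and similarly for $\bar\gamma$ and for $\chomeo\leftrightarrow\shomeo_{\tri-}$) is compatible with the \emph{source} and \emph{target} identifications simultaneously — i.e. that one is comparing transformations between the same pair of functors after transport. Since $\chomeo$ was \emph{defined} as the composite of $\shomeo_{\tri-}$ with exactly these isomorphisms, this compatibility is essentially automatic for the $\chomeo$ component; for the $\gamma,\bar\gamma$ components it is the content of the commuting triangles in diagram (\ref{eqn:sd.cd.tri}). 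No genuinely new estimate or construction is needed beyond what is already in place.

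\begin{proof}
  By Proposition \ref{prop:tri.direalize} there is a natural isomorphism $\direalize{-}\cong\direalize{\tri-}:\PRESHEAVES\BOX\ra\STREAMS$, and by Proposition \ref{prop:sd.cd.tri} a natural isomorphism $\tri\circ\cd\cong\sd\circ\tri$ through which $\tri\;\gamma$ and $\tri\;\bar\gamma$ correspond to $\gamma\tri$ and $\bar\gamma\tri$. Composing these, the $\PRESHEAVES\BOX$-stream maps $\direalize{\gamma},\direalize{\bar\gamma}:\direalize{\cd-}\ra\direalize{-}$ are identified, via natural isomorphisms of source and target, with $\direalize{\gamma\tri},\direalize{\bar\gamma\tri}:\direalize{\sd\tri-}\ra\direalize{\tri-}$; and $\chomeo:\direalize{\cd-}\ra\direalize{-}$ is, by its very definition, identified with $\shomeo_{\tri-}:\direalize{\sd\tri-}\ra\direalize{\tri-}$ under the same natural isomorphisms. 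By Proposition \ref{prop:sd.natural.approx}, $\direalize{\gamma}\homotopic\direalize{\bar\gamma}\homotopic\shomeo$ as natural transformations $\direalize{\sd-}\ra\direalize{-}$ on $\PRESHEAVES\DEL$; precomposing the witnessing directed homotopies with the functor $\tri:\PRESHEAVES\BOX\ra\PRESHEAVES\DEL$ yields $\direalize{\gamma\tri}\homotopic\direalize{\bar\gamma\tri}\homotopic\shomeo_{\tri-}$ as $\PRESHEAVES\BOX$-stream maps. Transporting this chain of $\homotopic$-equivalences back along the natural isomorphisms above—conjugation by a stream isomorphism on either end carries a directed homotopy to a directed homotopy—gives $\direalize{\gamma}\homotopic\direalize{\bar\gamma}\homotopic\chomeo$, as claimed.
\end{proof}
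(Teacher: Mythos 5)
Your proposal is correct and follows exactly the route the paper intends: the paper derives this corollary by citing Propositions \ref{prop:sd.cd.tri} and \ref{prop:sd.natural.approx} (together with the identifications of Proposition \ref{prop:tri.direalize} built into the definition of $\chomeo$), which is precisely the transport-through-$\tri$ argument you spell out. No gaps.
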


Other directed homotopy theories in the literature \cite{fgr:ditop} use cylinder objects defined in terms of the unit interval equipped with the trivial circulation instead of $\direalize{\BOX[1]}$.
While the latter homotopy relation is generally weaker than the former homotopy relation $\homotopic$ that we adopt, we identify criteria for both relations to coincide.
Figure \ref{fig:homotopy} illustrates the difference between the two homotopy relations.

\begin{thm}
  \label{thm:d}
  The following are equivalent for a pair of $\DIAGRAM$-stream maps
  $$f',f'':X\ra Y$$
  from a compact $\DIAGRAM$-stream $X$ to a quadrangulable $\DIAGRAM$-stream $Y$.
  \begin{enumerate}
    \item\label{item:d-homotopic} $f'\homotopic f''$.   
    \item\label{item:dihomotopic} There exists $\DIAGRAM$-stream map $H:X\times_{\STREAMS}\I\ra Y$ such that $H(-,0)=f'$ and $H(-,1)=f''$, where we regard the unit interval $\I$ as equipped with the circulation trivially preordering each open neighborhood.
  \end{enumerate}
\end{thm}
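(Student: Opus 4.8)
First, the implication (\ref{item:d-homotopic})$\Rightarrow$(\ref{item:dihomotopic}) needs no hypotheses and I would dispatch it immediately: the identity of underlying spaces is a stream map $X\times_{\STREAMS}\I\ra X\times_{\STREAMS}\vec{\I}$ because $\I$ carries the minimal (equality) circulation and forming products in $\STREAMS$ is monotone in each factor, so every directed homotopy $X\times_{\STREAMS}\vec{\I}\ra Y$ restricts along it to a homotopy of the form in (\ref{item:dihomotopic}). Homotopies of that form can be reversed (precompose with $t\mapsto 1-t$, a stream automorphism of the trivially circulated $\I$) and concatenated by pasting, so they generate an equivalence relation on $\DIAGRAM$-stream maps containing $\futurehomotopic$, hence containing $\homotopic$.

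For the substantive direction (\ref{item:dihomotopic})$\Rightarrow$(\ref{item:d-homotopic}) the plan is as follows. Since $Y$ is quadrangulable write $Y=\direalize{C}$, and since $\coprod X$ is compact the image of the given homotopy $H:X\times_{\STREAMS}\I\ra\direalize{C}$ meets only finitely many cells, so we may take $C$ finite and identify $\direalize{C}\cong\direalize{\cd^2 C}$ along $\chomeo^2$. Using compactness of $\coprod X\times\I$, uniform continuity of $H$, and a Lebesgue number for the cover of $|\cd^2 C|$ by open stars of vertices, I would partition $\I$ as $0=t_0<\dots<t_m=1$ so finely that for every object $g$, every $x\in X(g)$, and every $j$ the arc $H_g(\{x\}\times[t_{j-1},t_j])$ lies in a single open star $\openstar_{\cd^2 C(g)}\langle v\rangle$. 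Because $\homotopic$ is a congruence it suffices, writing $f_j:=H(-,t_j)$, to prove $f_{j-1}\homotopic f_j$ for each fixed $j$ and then concatenate over $j$.

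Fix $j$; let $H_j:X\times_{\STREAMS}\I\ra\direalize{\cd^2 C}$ be the reparametrised restriction of $H$ to $X\times[t_{j-1},t_j]$ and set $G_j:=\direalize{(\gamma\bar\gamma)_{C}}\circ H_j:X\times_{\STREAMS}\I\ra\direalize{C}$. By Lemma \ref{lem:cd.star.collapse} the fold $(\gamma\bar\gamma)_{C(g)}$ carries $\Star_{\cd^2 C(g)}\langle v\rangle$ into $\supp_{\cd^2}(\langle v\rangle,C(g))$; combining this with the local factorisation of double cubical subdivision through representables (Lemma \ref{lem:cd.fold}, made $\DIAGRAM$-equivariant via Lemma \ref{lem:natural.cd.folding}, at the cost of an extra subdivision absorbed into the counting above, exactly as in the proof of Lemma \ref{lem:bamfl}) one arranges that each arc $G_{j,g}(\{x\}\times\I)$ lies in a subpresheaf of $C(g)$ isomorphic to a hypercube $\BOX\boxobj{d}$, whose realisation is a compact topological lattice. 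By Corollary \ref{cor:cd.natural.approx}, $\direalize{(\gamma\bar\gamma)_{C}}$ is $\homotopic$-equivalent to the natural isomorphism $\chomeo^2$, so $G_j(-,0)\homotopic f_{j-1}$ and $G_j(-,1)\homotopic f_j$, reducing us to showing $G_j(-,0)\homotopic G_j(-,1)$. I would then define $P_j:X\ra\direalize{C}$ by $P_j(x)=\bigvee_{t\in\I}G_j(x,t)$, the join taken in the minimal hypercube containing the arc; joint continuity of $G_j$ makes $P_j$ continuous, and local constancy of the ambient hypercube together with monotonicity of lattice joins on coordinatewise‑comparable families makes $P_j$ monotone — using that the preorder of $\direalize{C}$ is generated (Proposition \ref{prop:almost.topological}) by the orders of its cells — so $P_j$ is a stream map. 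Finally
$$(x,s)\longmapsto\bigvee_{t\in[0,s]}G_j(x,t),\qquad (x,s)\longmapsto\bigvee_{t\in[1-s,1]}G_j(x,t)$$
are $\DIAGRAM$-stream maps $X\times_{\STREAMS}\vec{\I}\ra\direalize{C}$, monotone in $s$ because enlarging the index interval enlarges the join, witnessing $G_j(-,0)\futurehomotopic P_j$ and $G_j(-,1)\futurehomotopic P_j$; hence $G_j(-,0)\homotopic G_j(-,1)$, completing the argument.

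I expect the main obstacle to be precisely the claim in the third paragraph that, after folding, each arc $G_{j,g}(\{x\}\times\I)$ lands inside a genuine hypercube rather than merely an atomic subpresheaf of $\cd^2 C(g)$ (whose realisation need not be a lattice): controlling this — so that the joins defining $P_j$ and the two partial‑join homotopies exist, vary continuously in $x$, and are monotone — is exactly where Lemmas \ref{lem:cd.fold}, \ref{lem:natural.cd.folding} and the technique of Lemma \ref{lem:bamfl} are indispensable, and it is the step I would expect to require the most care, alongside the equivariant Lebesgue‑number argument over the compact $\coprod X\times\I$.
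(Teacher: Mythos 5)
Your proposal is correct and follows essentially the same route as the paper: realize $Y$ as $\direalize{\cd^4C}$, use compactness of $X$ to subdivide the time interval (and cover $X$) so finely that each stage lands in open stars of vertices, fold via Lemmas \ref{lem:cd.star.collapse}, \ref{lem:cd.fold}, \ref{lem:natural.cd.folding} and \ref{lem:bamfl} into the realization of a representable cubical set, and exploit the lattice structure there before passing back along Corollary \ref{cor:cd.natural.approx}. The only real difference is the endgame: where you build the cumulative partial-join homotopies $(x,s)\mapsto\bigvee_{t\in[0,s]}G_j(x,t)$, the paper simply invokes Lemma \ref{lem:primordial.homotopy}, linearly interpolating each endpoint map to the binary join of the two endpoint maps --- the same lattice-theoretic idea with less analytic overhead (no continuum-indexed suprema to justify).
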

\begin{proof}
  The implication (\ref{item:d-homotopic})$\Rightarrow$(\ref{item:dihomotopic}) follows because directed homotopies of $\DIAGRAM$-stream maps $X\ra Y$ define $\DIAGRAM$-stream maps of the form $X\times\I\ra Y$.  
  
  Suppose (\ref{item:dihomotopic}).  
  We take, without loss of generality, $Y$ to be of the form $\direalize{\cd^4C}$ for a $\DIAGRAM$-cubical set $C$.
  There exists a finite collection $\mathscr{O}$ of open $\DIAGRAM$-substreams $U$ of $X$, natural number $k$, and finite sequence $0=t_0<t_1<t_2<\cdots<t_{k-1}<t_k=1$ of real numbers such that $H(U(g)\times[t_i,t_{i+1}])$ lies inside an open star of a vertex of $\cd^4C(g)$ for each $\DIAGRAM$-object $g$ and each $U\in\mathscr{O}$ by $X$ compact.
 
  It suffices to consider the case $k=1$; the general case would then follow from induction.  
  We take $\mathscr{O}$ to be closed under non-empty intersection without loss of generality and regard $\mathscr{O}$ as a poset ordered by inclusion.  
  We take $(\mathscr{O}\times\DIAGRAM)'$ to be the full subcategory of $\mathscr{O}\times\DIAGRAM$ consisting of pairs $(U,g)$ with $U(g)\neq\varnothing$.
  Let $X'$ be the $(\mathscr{O}\times\DIAGRAM)'$-stream naturally sending each pair $(U,g)$ to $U(g)$.   
  Let $C'$ be the $(\mathscr{O}\times\DIAGRAM)'$-cubical set naturally sending each pair $(U,g)$ to $C(g)$.   
  Let $H'$ be the $(\mathscr{O}\times\DIAGRAM)'$-stream map $X'\times_{\STREAMS}\I\ra\direalize{\cd^4C'}$ defined by suitable restrictions of $H_g$'s.  
  There exist functor $R:(\mathscr{O}\times\DIAGRAM)'\ra\BOX$, object-wise monic $(\mathscr{O}\times\DIAGRAM)'$-cubical function $\iota:\BOX[R-]\ra\cd^2C'$, and $(\mathscr{O}\times\DIAGRAM)'$-stream map $H'':X'\times_{\STREAMS}\I\ra\direalize{\BOX[R-]}$ such that $\direalize{(\bar\gamma\gamma)_{C'}\iota}H''=\direalize{(\gamma\bar\gamma)^2_{C'}}H'$ [Lemma \ref{lem:bamfl}].  
  Then $H''(-,0)\homotopic H''(-,1)$ [Lemma \ref{lem:primordial.homotopy}], hence $\direalize{(\bar\gamma\gamma)^2_C}H'(-,0)\homotopic\direalize{(\bar\gamma\gamma)^2_C}H'(-,1)$, hence $\direalize{(\bar\gamma\gamma)^2_C}f'\homotopic \direalize{(\bar\gamma\gamma)^2_C}f''$ by taking colimits, and hence $f'\homotopic f''$ [Corollary \ref{cor:cd.natural.approx}].  
\end{proof}

We now prove our main simplicial and cubical approximation theorems.  

\begin{thm}
\label{thm:sd.approx}
  Consider a commutative diagram on the left side of
\begin{equation}
   \label{eqn:approx}
   \xymatrix@C=3pc{
     **[l]\direalize{B}
       \ar[d]_-{\direalize{\;\beta\;}}
       \ar[r]^-{\direalize{\;\alpha\;}}
   & **[r]\direalize{\tri\;D}
   & **[l]\sd^kB
       \ar[r]^-{(\gamma^{k-3}\bar\gamma\gamma\bar\gamma)_B}
       \ar[d]_-{\sd^k\beta}
   & B\ar[r]^\alpha
   & \tri\;D
   \\
     **[l]\direalize{C}
       \ar[ur]_-{f}
   &
   & **[l]\sd^kC
   \ar@{.>}[urr]_-{\psi},
   }
\end{equation}
  where $\alpha,\beta$ are $\DIAGRAM$-simplicial functions, $C$ is finite, and $D$ is a $\DIAGRAM$-cubical set.
  For each $k\gg 0$, there exists a $\DIAGRAM$-simplicial function $\psi$ such that the right side commutes and $\direalize{\psi}\homotopic f\shomeo^k_{C}$.
\end{thm}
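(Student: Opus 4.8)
The plan is to adapt the classical proof of simplicial approximation up to subdivision to the directed, relative, and $\DIAGRAM$-equivariant setting: edgewise subdivision $\sd$ replaces barycentric subdivision, the orientation-recovery Lemma~\ref{lem:recover.orientations} supplies the ingredient that is automatic for simplicial complexes but fails for arbitrary simplicial sets, and the folding Lemmas~\ref{lem:sd.fold} and~\ref{lem:sd.star.collapse} control the prescribed data near $|B|$. Since $C$ is finite, only finitely many of the $C(g)$ are nonempty and each $|C(g)|$ is compact metrizable. I would first fix, for each $\DIAGRAM$-object $g$, the open cover $\{\openstar_{\tri D(g)}\langle w\rangle\}_{w}$ of $|\tri D(g)|$ by open stars of vertices, pull it back along $f_g$ to an open cover of $|C(g)|$, and choose a Lebesgue number for it. The diameters of the sets $\shomeo^j_{C}\bigl(|\Star_{\sd^jC(g)}\langle v\rangle|\bigr)$ tend to $0$ uniformly as $j\to\infty$ --- the fact already used in the proof of Theorem~\ref{thm:direalization.preserves.embeddings} --- so there is an $N\geq 3$ such that for all $k\geq N$ and all $g$, every closed star of $\sd^kC(g)$ is carried by $f_g\shomeo^k_{C}$ into a single open star of $\tri D(g)$; the folding lemmas will be applied, with no largeness hypothesis, to the innermost subdivisions recorded in $(\gamma^{k-3}\bar\gamma\gamma\bar\gamma)_B$. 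Fix such a $k$.

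Next I would define $\psi$ on vertices. On the image of $\sd^k\beta$ the value is forced to equal $\alpha\circ(\gamma^{k-3}\bar\gamma\gamma\bar\gamma)_{B}$, and I would check this is compatible with the star condition relative to \emph{all} of $\sd^kC$: on $\direalize{\sd^kB}$ one has $f\shomeo^k_{C}\circ\direalize{\sd^k\beta}=\direalize\alpha\circ\shomeo^k_{B}$ by naturality of $\shomeo$ together with $f\circ\direalize\beta=\direalize\alpha$, while Lemmas~\ref{lem:sd.fold} and~\ref{lem:sd.star.collapse} show that $\shomeo^k_{B}$ carries the open star in $\sd^kB(g)$ of a vertex $v$ into the open star of the vertex $(\gamma^{k-3}\bar\gamma\gamma\bar\gamma)_{B(g)}(v)$. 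For a vertex $v$ not in that image I would take the \emph{canonical} value $\psi_g(v):=\min\supp_{|-|}\bigl(\{f_g\shomeo^k_{C}(v)\},\tri D(g)\bigr)$; a point being connected and projective, Lemma~\ref{lem:atomic.supports} makes this support the image of a representable simplicial set, hence atomic and equipped with a minimum vertex (Definition~\ref{defn:minimum.of.atomic.simplicial.set}), and since a point lies in the relative interior of its support the assignment commutes with the maps $\tri D(\gamma)$ --- so the vertex map underlies a candidate $\DIAGRAM$-simplicial function.

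I would then verify that this vertex map extends to a $\DIAGRAM$-simplicial function $\psi\colon\sd^kC\to\tri D$, and that $\direalize\psi\homotopic f\shomeo^k_C$. Because $\tri D$ is a colimit of the poset nerves $\sn\boxobj{n}=\tri\BOX\boxobj{n}$, the extension exists once the $\psi$-images of the vertices of every nondegenerate simplex of $\sd^kC(g)$ form a chain inside one cell $\sn\boxobj{n}$ of $\tri D(g)$. They lie in a common cell by the refinement property of $k$ (the $f_g\shomeo^k_{C}$-images of the open stars of those vertices share a point, so the open stars of their $\psi$-images meet), and the chain is correctly oriented by Lemma~\ref{lem:recover.orientations}: a directed edge $v\to v'$ gives $\shomeo^k_{C}(v)\leqslant_{\direalize{C(g)}}\shomeo^k_{C}(v')$ (Lemma~\ref{lem:simplicial.preorders}), hence $f_g\shomeo^k_{C}(v)\leqslant_{\direalize{\tri D(g)}}f_g\shomeo^k_{C}(v')$ since $f_g$ is a stream map, hence $\psi_g(v)\leqslant\psi_g(v')$ in $\boxobj{n}$; on $\sd^kB(g)$ the same inequality holds because $\alpha$ is a simplicial function. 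For the homotopy, for each $g$ and each point $x$ in a simplex of $\sd^kC(g)$ both $\direalize\psi_g(x)$ and $f_g\shomeo^k_{C}(x)$ lie in one closed cell, which realizes by Proposition~\ref{prop:tri.direalize} to a compact connected convex sublattice $\vec\I^{n}$ of a lattice-ordered topological vector space; replacing $f\shomeo^k_{C}$ by its pointwise join with $\direalize\psi$ and interpolating linearly as in the proof of Lemma~\ref{lem:primordial.homotopy} --- organized over an auxiliary diagram built from $\DIAGRAM$ and the posets of cells --- yields a zig-zag of directed homotopies $\direalize\psi\homotopic f\shomeo^k_{C}$, consistent with the prescribed values on $\sd^kB$ by virtue of $\shomeo\homotopic\direalize\gamma\homotopic\direalize{\bar\gamma}$ (Proposition~\ref{prop:sd.natural.approx}).

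The step I expect to be the main obstacle is the vertex assignment. One must choose $\psi$ on vertices so that it is simultaneously canonical (hence natural in $\DIAGRAM$), literally equal to $\alpha\circ(\gamma^{k-3}\bar\gamma\gamma\bar\gamma)_{B}$ after precomposition with $\sd^k\beta$, and compatible with the global star and orientation conditions. Reconciling the last two is the crux: it is exactly what forces the three alternating subdivisions beyond the count dictated by the Lebesgue argument, together with the appeal to the folding Lemmas~\ref{lem:sd.fold} and~\ref{lem:sd.star.collapse}, whose function is to certify that the boundary data prescribed near $|B|$ remains a legitimate simplicial-approximation choice for all of $\sd^kC$. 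Everything else is a careful transcription of the classical argument, the genuinely new geometric input being the use of the circulation on a stream realization --- through Lemma~\ref{lem:recover.orientations} --- to pin down the orientations of the approximating simplices.
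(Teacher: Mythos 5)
Your outline correctly identifies the classical skeleton (Lebesgue/star argument, a $\min\supp$ vertex assignment, Lemma~\ref{lem:recover.orientations} for monotonicity, Lemma~\ref{lem:primordial.homotopy} plus linear interpolation for the homotopy), and you rightly flag the vertex assignment as the crux. But the resolution you offer for that crux has a genuine gap. You write that the vertex map extends to a simplicial function ``once the $\psi$-images of the vertices of every nondegenerate simplex of $\sd^kC(g)$ form a chain inside one cell $\sn\boxobj{n}$ of $\tri D(g)$.'' That criterion is valid only when the target is literally the nerve of a preordered set, where a simplex is determined by its vertices. The target here is $\tri D$, a colimit of nerves $\sn\boxobj{n}$ glued along the cubical structure of $D$; distinct simplices of $\tri D$ can share all their vertices (e.g.\ two distinct $1$-cubes of $D$ with the same endpoints), and the cell containing a given chain is neither unique nor canonically chosen. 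Likewise $\sd^kC$ is not a simplicial complex, so a compatible choice must be made on every nondegenerate simplex, coherently with faces, degeneracies, and the $\DIAGRAM$-action --- not just on vertices. The paper's introduction explicitly names this as the reason the classical argument does not transcribe, and the actual proof resolves it with machinery your proposal does not supply: it forms the auxiliary category $\mathscr{A}$ of all simplices $\sigma\colon A\to\sd^kC$, and uses Lemma~\ref{lem:bamfl} (built on the folding Lemmas~\ref{lem:cd.fold}, \ref{lem:natural.cd.folding}, \ref{lem:cd.star.collapse}) to produce, naturally in $(\sigma,g)$, a representable cube $\BOX[R(\sigma,g)]$ with a monic $\iota$ into $\sd^2\tri D$ and a lift $f'''$ of (a $(\gamma\bar\gamma)^2$-composite of) $f$. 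Only then is the $\min\supp$ vertex assignment applied, with values in the honest poset $R(\sigma,g)$, where chains do determine simplices; naturality over $\mathscr{A}\times\DIAGRAM$ is what makes the locally defined simplices glue. Your case-split definition of $\psi$ on vertices (forced on the image of $\sd^k\beta$, canonical elsewhere) also leaves unverified that the forced values coincide with the canonical ones --- in the paper this is the long support computation at the end of the proof, and it is where the specific composite $(\gamma^{k-3}\bar\gamma\gamma\bar\gamma)_B=(\gamma^{k-4}(\gamma\bar\gamma)^2)_B$ is actually derived rather than posited.

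In short: the analytic and order-theoretic ingredients of your plan are the right ones, but without a mechanism for making coherent, natural choices of simplices of $\tri D$ (as opposed to choices of vertices), the construction of $\psi$ does not go through. Supplying that mechanism is exactly the content of Lemma~\ref{lem:bamfl} and the coend-style indexing over the category of simplices, and it is the part of the proof that cannot be reduced to ``a careful transcription of the classical argument.''
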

\begin{proof} 
  Fix $k\gg 0$.  
  Let $f'$ be the $\DIAGRAM$-stream map
  $$f'=\ordinomorphism^{-4}_{\tri D}f\ordinomorphism^k_C:\;\direalize{\sd^kC}\;\ra\;\direalize{\sd^4\tri D}.$$
  
  Let $\mathscr{A}$ be the small category whose objects are all $\DIAGRAM$-simplicial functions $\sigma:A\ra\sd^kC$ such that $A(g)$ is representable for finitely many of the $g$ and empty otherwise and whose morphisms are all commutative triangles.  
  We take $(\mathscr{A}\times\DIAGRAM)'$ to be the full subcategory of $\mathscr{A}\times\DIAGRAM$ consisting of pairs $(\sigma:A\ra\sd^kC,g)$ with $A(g)\neq\varnothing$.
  Let $C'$ and $D'$ be the $(\mathscr{A}\times\DIAGRAM)'$-simplicial set and $(\mathscr{A}\times\DIAGRAM)'$-cubical set naturally sending each pair $(\sigma:A\ra\sd^kC,g)$ respectively to $A(g)$ and $D(g)$ and $f''$ the $(\mathscr{A}\times\DIAGRAM)'$-stream map $\direalize{C'}\ra\direalize{\sd^4\tri D'}$ such that $f''_{(\sigma,g)}=f'_g\direalize{\sigma_g}$.  
  
  The set $f''_{(\sigma,g)}|C'(\sigma,g)|$ lies in an open star of a vertex of $\sd^4\tri D'(g)$ for each $(\mathscr{A}\times\DIAGRAM)'$-object $(\sigma,g)$ by $k\gg 0$ and $C$ compact.
  Thus there exist functor $R:(\mathscr{A}\times\DIAGRAM)'\ra\BOX$, $(\mathscr{A}\times\DIAGRAM)'$-cubical function $\iota:\BOX[R-]\ra\sd^2\tri D'$, and $(\mathscr{A}\times\DIAGRAM)'$-stream map $f''':\direalize{C'}\ra\direalize{\BOX[R-]}$ such that $\direalize{(\gamma\bar\gamma)_{\tri D'}(\tri\iota)}f'''=\direalize{(\gamma\bar\gamma)^2_{\tri D'}}f''$ [Lemma \ref{lem:bamfl} and Proposition \ref{prop:sd.cd.tri}].  
  The function
  $$\phi_{(\sigma:A\ra\sd^kC,g)}:[\dim A(g)]\ra R(\sigma,g),$$
  natural in $(\mathscr{A}\times\DIAGRAM)'$-objects $(\sigma,g)$ and defined by
  $$\phi_{(\sigma:A\ra\sd^kC,g)}(v)=\min\supp_{|-|}(f'''_{(\sigma,g)}|v|,|\tri\BOX[R(\sigma,g)]|)$$
  is monotone [Lemma \ref{lem:recover.orientations}].
  Thus $\sn\phi_{(\sigma,g)}$ defines a simplicial function
  $$A(g)\ra\sn R(\sigma,g)=\tri\BOX[R(\sigma,g)]$$
  natural in $(\mathscr{A}\times\DIAGRAM)'$-objects $(\sigma,g)$; hence $\sn\phi$ defines an $(\mathscr{A}\times\DIAGRAM)'$-simplicial function
  $$\sn\phi:C'\ra\tri\BOX[R-].$$
  The $(\mathscr{A}\times\DIAGRAM)'$-simplicial function
  $$(\gamma\bar\gamma)_{\tri D'}(\tri \iota)(\sn\phi):C'\ra\tri D'$$
  induces a $\DIAGRAM$-simplicial function
  $$\psi:\sd^kC\ra\tri D$$
  by taking colimits.  
  Observe $f'''\homotopic\direalize{\sn\phi}$ [Lemma \ref{lem:primordial.homotopy}], hence
  $$\direalize{(\gamma\bar\gamma)^2_{\tri D'}}f''=\direalize{(\gamma\bar\gamma)_{\tri D'}(\tri \iota)}f'''\homotopic\direalize{(\gamma\bar\gamma)_{\tri D'}(\tri \iota)\sn\phi},$$
  hence $\direalize{(\gamma\bar\gamma)^2_{\tri D}}f'\homotopic\direalize{\psi}$ by taking colimits and hence
  $$f\shomeo^k_C\homotopic\direalize{\psi}$$
  by $f\ordinomorphism^k_C\homotopic\direalize{(\gamma\bar\gamma)_{\tri D}^2}f'$ [Proposition \ref{prop:sd.natural.approx}].
  
  To show that the right side in (\ref{eqn:approx}) commutes, it suffices to consider the case $\beta_g$ object-wise inclusion and $B(g)$ empty or representable for each $g$ by naturality. 
  Hence it suffices to show that the $[0]$-component of the $g$-component of the right side in (\ref{eqn:approx}) commutes for each $g$ because simplicial functions from representable simplicial sets are determined by their $[0]$-components.
  For all $g$ and $v\in(\sd^kB(g))[0]$, 
  \begin{align*}
       \psi_g(\sd^k\beta_g)\langle v\rangle
    =& ((\gamma\bar\gamma)_{\tri D(g)}(\tri\iota_g))\langle\min\supp_{|-|}(f'''_{(v_*,g)}|v|,\tri\BOX[R(v_*,g)])\rangle\\
    =& \langle\min\supp_{|-|}(|(\gamma\bar\gamma)^2_{\tri D(g)}|f'_{(v_*,g)}|v|,\tri D(g))\rangle\\
    =& \langle\min\supp_{|-|}(|(\gamma\bar\gamma)^2_{\tri D(g)}|\ordinomorphism^{-4}_{\tri D(g)}|\alpha_g|\ordinomorphism^k_{C(g)}|v|,\tri D(g))\rangle\\
    =& \langle\min\supp_{|-|}(|(\gamma\bar\gamma)^2_{\tri D(g)}|\ordinomorphism^{k-4}_{\sd^4\tri D(g)}|(\sd^k\alpha)\langle v\rangle|,\tri D(g))\rangle\\
    =& \langle\min\supp_{|-|}(\ordinomorphism^{k-4}_{\tri D(g)}|((\gamma\bar\gamma)^2_{\sd^{k-4}\tri D(g)}(\sd^k\alpha))\langle v\rangle|,\tri D(g))\rangle\\
    =& \langle\min\supp_{\sd^{k-4}}((\gamma\bar\gamma)^2_{\sd^{k-4}\tri D(g)}(\sd^k\alpha)\langle v\rangle,\tri D(g))\rangle\\
    =& (\gamma^{k-4}(\gamma\bar\gamma)^2)_{\tri D(g)}(\sd^k\alpha)\langle v\rangle\\
    =& (\gamma^{k-3}\bar\gamma\gamma\bar\gamma)_{\tri D(g)}(\sd^k\alpha)\langle v\rangle\\
    =& \alpha(\gamma^{k-3}\bar\gamma\gamma\bar\gamma)_{B(g)}\langle v\rangle.
  \end{align*}
\end{proof}

\begin{cor}
  \label{cor:cd.approx}
  Consider a commutative diagram on the left side of
\begin{equation}
   \label{eqn:cd.approx}
   \xymatrix@C=3pc{
     **[l]\direalize{B}
       \ar[d]_-{\direalize{\;\beta\;}}
       \ar[r]^-{\direalize{\;\alpha\;}}
   & **[r]\direalize{\qua\;D}
   & **[l]\cd^k B
       \ar[r]^-{(\gamma^{k-3}\bar\gamma\gamma\bar\gamma)_B}
       \ar[d]_-{\cd^k\beta}
   & B\ar[r]^-{\alpha}
   & \qua D
   \\
     **[l]\direalize{C}
       \ar[ur]_-{f}
   &
   & **[l]\cd^kC
   \ar@{.>}[urr]_-{\psi},
   }
\end{equation}
  where $\alpha,\beta$ are $\DIAGRAM$-cubical functions, $C$ is finite, and $D$ is a $\DIAGRAM$-simplicial set.
  For each $k\gg 0$, there exists a $\DIAGRAM$-cubical function $\psi$ such that the right side commutes and $\direalize{\psi}\homotopic f\chomeo^k_{C}$.
\end{cor}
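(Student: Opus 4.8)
The plan is to push the problem across the triangulation adjunction $\tri\dashv\qua$, invoke the simplicial approximation theorem (Theorem~\ref{thm:sd.approx}) there, and pull the resulting simplicial function back to a cubical one. Write $\theta$ for the natural isomorphism $\direalize{\tri-}\cong\direalize{-}$ of Proposition~\ref{prop:tri.direalize}, $\nu$ for the natural isomorphism $\tri\cd\cong\sd\tri$ of Proposition~\ref{prop:sd.cd.tri} and $\nu^{(k)}:\tri\cd^k\cong\sd^k\tri$ for its $k$-fold iterate, and $\eta,\epsilon$ for the unit and counit of $\tri\dashv\qua$. Applying $\tri$ to $\alpha,\beta$ and transporting $f$ along $\theta$ yields $\DIAGRAM$-simplicial functions $\tri\alpha:\tri B\ra\tri\qua D$ and $\tri\beta:\tri B\ra\tri C$ together with a $\DIAGRAM$-stream map $\hat f=\theta_{\qua D}^{-1}f\,\theta_C:\direalize{\tri C}\ra\direalize{\tri\qua D}$ satisfying $\hat f\,\direalize{\tri\beta}=\direalize{\tri\alpha}$ --- an instance of the left square of (\ref{eqn:approx}) with the ambient $\DIAGRAM$-cubical set ``$D$'' of Theorem~\ref{thm:sd.approx} taken to be $\qua D$ --- and $\tri C$ is finite since $\tri$ is cocontinuous and sends representables to finite simplicial sets.

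Theorem~\ref{thm:sd.approx} then supplies, for $k\gg 0$, a $\DIAGRAM$-simplicial function $\psi_0:\sd^k\tri C\ra\tri\qua D$ whose right square commutes and with $\direalize{\psi_0}\homotopic\hat f\,\shomeo^k_{\tri C}$. Put $\psi_0'=\psi_0\,\nu^{(k)}_C:\tri\cd^kC\ra\tri\qua D$ and let $\psi:\cd^kC\ra\qua D$ be the transpose of $\epsilon_D\psi_0'$ under $\tri\dashv\qua$, that is $\psi=\qua(\epsilon_D\psi_0')\,\eta_{\cd^kC}$. Commutativity of the right square of (\ref{eqn:cd.approx}) I would check by transposing it under $\tri\dashv\qua$: the transpose of $\alpha$ is $\epsilon_D\tri\alpha$, that of $\psi$ is $\epsilon_D\psi_0'$ by construction, and $\tri$ carries the structure maps $(\gamma^{k-3}\bar\gamma\gamma\bar\gamma)$ to the corresponding simplicial ones through $\nu^{(k)}$ by Proposition~\ref{prop:sd.cd.tri}; the transposed square is then obtained from the commuting right square of (\ref{eqn:approx}) by postcomposing with $\epsilon_D$ and conjugating by the $\nu$'s, so it commutes, and transposition under $\tri\dashv\qua$ is a bijection.

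For the homotopy claim $\direalize{\psi}\homotopic f\,\chomeo^k_C$, naturality of $\theta$ reduces it to a comparison of $\direalize{\tri\psi}$ with $\theta_{\qua D}^{-1}(f\,\chomeo^k_C)\,\theta_{\cd^kC}$; unwinding the definition of $\chomeo$ --- built from $\shomeo_{\tri-}$ and the realizations of $\nu$ and $\theta$ --- identifies the latter with $\hat f\,\shomeo^k_{\tri C}\,\direalize{\nu^{(k)}_C}$. Setting $\bar\psi=\qua\psi_0'\,\eta_{\cd^kC}:\cd^kC\ra\qua\tri\qua D$, one has $\psi=\qua\epsilon_D\circ\bar\psi$ by functoriality of $\qua$ and $\epsilon_{\tri\qua D}\circ\tri\bar\psi=\psi_0'$ by naturality of $\epsilon$ and a triangle identity, hence $\direalize{\tri\psi}=\direalize{\tri\qua\epsilon_D}\,\direalize{\tri\bar\psi}$ and $\direalize{\psi_0'}=\direalize{\epsilon_{\tri\qua D}}\,\direalize{\tri\bar\psi}$. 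Both $\direalize{\tri\qua\epsilon_D}$ and $\direalize{\epsilon_{\tri\qua D}}$ are left inverses of $\direalize{\tri\eta_{\qua D}}$ --- the first on the nose by a triangle identity, the second up to $\homotopic$ by Lemma~\ref{lem:tri.equivalence} applied to the $\DIAGRAM$-cubical set $\qua D$, which at the same time exhibits $\direalize{\tri\eta_{\qua D}}$ as invertible up to $\homotopic$ --- so the two agree up to $\homotopic$, a map invertible up to $\homotopic$ having a left inverse unique up to $\homotopic$. Therefore $\direalize{\tri\psi}\homotopic\direalize{\psi_0'}=\direalize{\psi_0}\,\direalize{\nu^{(k)}_C}\homotopic\hat f\,\shomeo^k_{\tri C}\,\direalize{\nu^{(k)}_C}$ by Theorem~\ref{thm:sd.approx}, which is the transported claim, so $\direalize{\psi}\homotopic f\,\chomeo^k_C$.

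I expect the homotopy step to be the main obstacle: since $\psi$ is only the transpose of $\epsilon_D\psi_0'$, its stream realization is not literally $\direalize{\psi_0'}$, and one must argue that inserting the counit $\epsilon_D$ is invisible up to $\homotopic$ --- precisely the content of Lemma~\ref{lem:tri.equivalence} --- on top of the routine but fiddly bookkeeping identifying $\chomeo^k$ with $\shomeo^k_{\tri-}$ through the natural isomorphisms of Propositions~\ref{prop:sd.cd.tri} and~\ref{prop:tri.direalize}.
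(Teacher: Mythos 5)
Your proposal is correct and follows essentially the same route as the paper: apply Theorem \ref{thm:sd.approx} across the adjunction $\tri\dashv\qua$ (using Proposition \ref{prop:sd.cd.tri} to identify $\tri\cd^k$ with $\sd^k\tri$), define $\psi$ as the adjoint transpose of $\epsilon_D$ postcomposed with the simplicial approximation, check commutativity by zig-zag identities and naturality, and absorb the inserted counit up to $\homotopic$ via Lemma \ref{lem:tri.equivalence}. The only differences are bookkeeping (your explicit $\theta$, $\nu^{(k)}$, and $\bar\psi$ versus the paper's implicit identifications), and your final step correctly isolates the same comparison $\direalize{\tri\qua\epsilon_D}\homotopic\direalize{\epsilon_{\tri\qua D}}$ that the paper's "right triangle" handles.
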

\begin{proof}
  Fix $k\gg 0$.
  Let $\tr=\tri$, $\qu=\qua$, $\epsilon$ and $\eta$ be the respective counit and unit of the adjunction $\tr\vdash\qu$.
  
  There exists a $\DIAGRAM$-simplicial function $\psi'$ such that $\direalize{\psi'}\homotopic f\shomeo^k_C$ and the diagram
  \begin{equation}
   \label{eqn:tri.cd.approx}
   \xymatrix@C=4pc{
     **[l]\tr\cd^kB
       \ar[r]^-{\tr(\gamma^{k-3}\bar\gamma\gamma\bar\gamma)_B}
       \ar[d]_-{\tr(\cd^k\beta)}
   & \tr B\ar[r]^{\tr\alpha}
   & \tr\;\qu\;D
   \\
     **[l]\tr\cd^kC
       \ar@{.>}[urr]_-{\psi'},
   }
  \end{equation}
  commutes [Theorem \ref{thm:sd.approx} and Proposition \ref{prop:sd.cd.tri}].
  Let $\psi$ be the composite
  \begin{equation*}
    \sd^kC\xra{\eta_{\sd^kC}}\qu\tr\sd^kC\xra{\qu\psi'}\qu\tr\qu D\xra{\qu\epsilon_D}\qu D.
  \end{equation*}
  The right side of (\ref{eqn:cd.approx}) commutes by an application of (\ref{eqn:tri.cd.approx}), naturality and a zig-zag identity.  

  To see $\direalize{\psi'}\homotopic\direalize{\tr\psi}$ and hence $f\homotopic\direalize{\psi}$, consider the diagram
  \begin{equation*}
    \xymatrix@C=4pc{
    & **[r]\tr\qu\tr\cd^kC\ar[r]^-{\tr\qu\psi'}\ar[d]|-{\epsilon_{\tr\cd^kC}}
    & (\tr\qu)^2D\ar[r]^-{\tr\qu\epsilon_D}\ar[d]|-{\epsilon_{\tr\qu\;D}}
    & \tr\qu D
    \\
      **[l]\tr\cd^kC\ar[ur]^-{\tr\eta_{\sd^kC}}\ar[r]_-{\tr\id_{\cd^kC}}
     & **[l]\tr\cd^kC\ar[r]_-{\psi'}
    & \tr\qu D.\ar[ur]_-{\id_{\tr\qu D}}
    }
  \end{equation*}
  The left triangle commutes by a zig-zag identity. 
  The middle square commutes by naturality.
  After applying $\direalize{-}$, the right triangle commutes up to $\homotopic$ by a zig-zag identity and $\direalize{\tr\;\eta_{\qu D}}$ an inverse to $\direalize{\epsilon_{\tr\qu D}}$ up to $\homotopic$ [Lemma \ref{lem:tri.equivalence}].  
  Thus $\direalize{\psi'}\homotopic\direalize{\tr\;\psi}$.  
\end{proof}

\appendix
\addcontentsline{toc}{section}{Appendix}
\addtocontents{toc}{\protect\setcounter{tocdepth}{0}}

\section{Preorders}\label{sec:preorders}

Fix a set $X$. 
Generalizing a function $X\ra X$, a \textit{relation $\vartriangleright$ on $X$} is the data of $X$, the set on which $\vartriangleright$ is defined, and its \textit{graph} $\graph{\vartriangleright}$, a subset of $X\times X$.
For each relation $\vartriangleright$ on $X$, we write $x\vartriangleright y$ whenever $(x,y)\in\graph{\vartriangleright}$.  
A \textit{preorder $\leqslant_X$ on $X$} is a relation on $X$ such that $x\leqslant_Xx$ for all $x\in X$ and $x\leqslant_Xz$ whenever $x\leqslant_Xy$ and $y\leqslant_Xz$.  
A \textit{preordered set} is a set $X$ implicitly equipped with a preorder, which we often write as $\leqslant_X$.  
An \textit{infima} of a subset $A$ of a preordered set $X$ is an element $y\in X$ such that $y\leqslant_Xa$ for all $a\in A$ and $x\leqslant_Xy$ whenever $x\leqslant_Xa$ for all $a\in A$.  
We dually define \textit{suprema} of subsets of preordered sets.  
An element $m$ of a preordered set $X$ is a \textit{minimum of $X$} if $m\leqslant_Xx$ for all $x\in X$.  
We dually define a \textit{maximum} of a preordered set. 

\begin{eg}
  The minima and maxima of $[n]$ are $0$ and $n$, respectively.
\end{eg}

\begin{eg}
  The minima and maxima of $\boxobj{n}$ are, respectively
  $$(0,\ldots,0),\quad (1,\ldots,1).$$
\end{eg}

A function $f:X\ra Y$ from a preordered set $X$ to a preordered set $Y$ is \textit{monotone} if $f(x)\leqslant_Xf(y)$ whenever $x\leqslant_Xy$.  
A monotone function $f:X\ra Y$ is \textit{full} if $x\leqslant_Xy$ whenever $f(x)\leqslant_Xf(y)$.

\begin{eg}
  The isomorphisms in $\PREORDEREDSETS$ are the full monotone bijections.
\end{eg}

A monotone function $f:X\ra Y$ between preordered sets having minima and maxima \textit{preserves extrema} if $f$ sends minima to minima and maxima to maxima.  
Preordered sets and monotone functions form a complete, cocomplete, and Cartesian closed category $\PREORDEREDSETS$.
The mapping preordered set $Y^X$ is the set of monotone functions $X\ra Y$ equipped with the pointwise preorder induced from $\leqslant_Y$.  

\begin{eg}
In particular, there exists a bijection
$$X^{[1]}\cong\graph{\leqslant_X}$$
of underlying sets, natural in preordered sets $X$.
\end{eg}

A \textit{semilattice} is a set $X$ equipped with a commutative, associative, and idempotent binary multiplication $X\times X\ra X$, which we write as $\meet_X$.
A \textit{semilattice homomorphism} between semilattices is a function preserving the multiplications.  
We can regard semilattices as preordered sets as follows.  

\begin{lem}
  For each preordered set $X$, the following are equivalent:
  \begin{enumerate}
    \item The set $X$ is a semilattice such that $x\leqslant_Xy$ if and only if $x\meet_Xy=x$.  
    \item Each subset $\{y,z\}\subset X$ has a unique \textit{infimum}.
  \end{enumerate}
  If these equivalent conditions are satisfied, then $\meet_X$ describes taking binary infima.
\end{lem}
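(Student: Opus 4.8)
The plan is to verify the two implications directly, with the final assertion falling out of both arguments. The one preliminary point I would establish first is that, under \emph{either} hypothesis, the preorder $\leqslant_X$ is antisymmetric, hence a partial order: under (2), if $x\leqslant_Xx'$ and $x'\leqslant_Xx$ then $x$ and $x'$ are both infima of $\{x,x'\}$, so $x=x'$ by the uniqueness clause; under (1), if $x\leqslant_Xx'$ and $x'\leqslant_Xx$ then $x=x\meet_Xx'=x'\meet_Xx=x'$ by commutativity of $\meet_X$. This is what makes ``\emph{the} infimum'' of a finite subset legitimate once existence is known, and it is really the only subtlety: without it, ``infimum'' would be well defined only up to the equivalence $x\leqslant_Xy\leqslant_Xx$, and neither the uniqueness clause of (2) nor the phrase ``$\meet_X$ describes taking binary infima'' would quite make sense.

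For (1)$\Rightarrow$(2), I would fix $y,z\in X$ and show $y\meet_Xz$ is the infimum of $\{y,z\}$. It is a lower bound because $(y\meet_Xz)\meet_Xy=y\meet_Xz$ by commutativity, associativity, and idempotence, so $y\meet_Xz\leqslant_Xy$ by the order--meet compatibility, and symmetrically $y\meet_Xz\leqslant_Xz$. It is the greatest lower bound because $w\leqslant_Xy$ and $w\leqslant_Xz$ give $w\meet_Xy=w=w\meet_Xz$, whence $w\meet_X(y\meet_Xz)=(w\meet_Xy)\meet_Xz=w\meet_Xz=w$, i.e. $w\leqslant_Xy\meet_Xz$. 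Uniqueness is then automatic by antisymmetry, and the same computation proves the final assertion in this direction.

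For (2)$\Rightarrow$(1), I would define $y\meet_Xz$ to be the (unique) infimum of $\{y,z\}$, so that the final assertion holds by construction. Commutativity is immediate from $\{y,z\}=\{z,y\}$, and idempotence from the observation that $x$ is the infimum of $\{x\}=\{x,x\}$. For associativity, put $m=(x\meet_Xy)\meet_Xz$; transitivity gives $m\leqslant_Xx,\,m\leqslant_Xy,\,m\leqslant_Xz$, so $m$ is a lower bound of $\{y,z\}$ and hence $m\leqslant_Xy\meet_Xz$, making $m$ a lower bound of $\{x,y\meet_Xz\}$, so $m\leqslant_Xx\meet_X(y\meet_Xz)$; the symmetric argument gives the reverse inequality, and antisymmetry forces $m=x\meet_X(y\meet_Xz)$. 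Finally, for the compatibility condition: if $x\leqslant_Xy$ then $x$ is the greatest lower bound of $\{x,y\}$, so $x=x\meet_Xy$; conversely $x\meet_Xy=x$ gives $x=x\meet_Xy\leqslant_Xy$. I expect no genuine obstacle here beyond bookkeeping with the semilattice axioms; the antisymmetry observation above is the only place that needs a moment's thought.
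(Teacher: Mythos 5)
Your proof is correct and complete; the paper states this lemma in Appendix \S\ref{sec:preorders} without any proof, treating it as a standard order-theoretic fact, so there is no argument of the author's to compare against. Your argument is the expected one, and your preliminary observation that either hypothesis forces antisymmetry of the preorder (via the uniqueness clause in (2), via commutativity of $\meet_X$ in (1)) is exactly the point worth making explicit, since the lemma is stated for preordered rather than partially ordered sets and ``the unique infimum'' is otherwise only defined up to the equivalence generated by $\leqslant_X$.
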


A \textit{lattice} $X$ is a set $X$ equipped with a pair of commutative and associative binary multiplications $X\times X\ra X$, which we often write as $\meet_X,\join_X$ and respectively call the \textit{meet} and \textit{join} operators of $X$, such that $x\join_Xx=x\meet_Xx=x$ for all $x\in X$ and $x\join_X(x\meet_Xy)=x\meet_X(x\join_Xy)=x$ for all $x,y\in X$.  
We can regard lattices as preordered sets equipped with dual semilattice structures as follows.

\begin{lem}
  \label{lem:lattices}
  For each preordered set $X$, the following are equivalent:
  \begin{enumerate}
    \item The set $X$ is a lattice such that $x\leqslant_Xy$ if and only if $x\meet_Xy=x$.  
    \item The set $X$ is a lattice such that $x\leqslant_Xy$ if and only if $x\join_Xy=y$.  
    \item Each subset $\{y,z\}\subset X$ has a unique \textit{infimum} and a unique \textit{supremum}.
  \end{enumerate}
  If these equivalent conditions are satisfied, then the $\join_X$ describes taking binary suprema and $\meet_X$ describes taking binary infima.
\end{lem}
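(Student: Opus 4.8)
The plan is to run the classical identification of the algebraic notion of a lattice with the order-theoretic one, taking care that we start from a preorder and that conditions (1) and (2) a priori invoke two different (though interchangeable) algebraic structures on $X$. Concretely, I would establish (1) $\Leftrightarrow$ (3) in detail and then deduce (2) $\Leftrightarrow$ (3), and hence (1) $\Leftrightarrow$ (2), from the symmetry of the lattice axioms under interchanging $\meet_X$ and $\join_X$.

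For (1) $\Rightarrow$ (3), suppose $X$ carries a lattice structure with $x\leqslant_X y$ if and only if $x\meet_X y=x$. First I would note that $\leqslant_X$ is antisymmetric: $x\leqslant_X y\leqslant_X x$ forces $x=x\meet_X y=y\meet_X x=y$ by commutativity. A short computation from idempotence, commutativity and associativity shows $x\meet_X y\leqslant_X x$ and $x\meet_X y\leqslant_X y$, and that $w\leqslant_X x$ together with $w\leqslant_X y$ gives $w\meet_X(x\meet_X y)=w$; hence $x\meet_X y$ is a greatest lower bound of $\{x,y\}$, and it is the unique infimum by antisymmetry. The absorption identities yield the companion equivalence $x\meet_X y=x\iff x\join_X y=y$, and the dual argument then exhibits $x\join_X y$ as the unique supremum. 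This also proves the final assertion of the lemma.

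For (3) $\Rightarrow$ (1), assume every two-element subset has a unique infimum and a unique supremum. Uniqueness of the infimum of $\{x,x\}$ again forces antisymmetry, so the rules $\meet_X(x,y)=\inf\{x,y\}$ and $\join_X(x,y)=\sup\{x,y\}$ are well-defined. Commutativity and idempotence are immediate; associativity holds because $(x\meet_X y)\meet_X z$ and $x\meet_X(y\meet_X z)$ are each greatest lower bounds of $\{x,y,z\}$ and so coincide by antisymmetry; the absorption laws hold because $x$ is at once an upper bound of $\{x,\,x\meet_X y\}$ and below every upper bound of that set (and dually). Finally $x\leqslant_X y\iff x=\inf\{x,y\}\iff x\meet_X y=x$, so (1) holds. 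The equivalence (2) $\Leftrightarrow$ (3) is the same argument with the roles of infima and suprema, and of $\meet_X$ and $\join_X$, swapped, condition (3) being self-dual; alternatively (1) $\Leftrightarrow$ (2) follows at once because interchanging the two operations of a lattice again produces a lattice and turns a structure witnessing (1) into one witnessing (2) and conversely.

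I do not anticipate a real obstacle: this is routine lattice theory. The two points that deserve attention are that each of (1), (2), (3) on its own forces $\leqslant_X$ to be a partial order, which is precisely what makes \emph{the} unique infimum a sensible object and validates the associativity step, and that the lattice structures named in (1) and in (2) need not be literally the same, being related by the interchange of $\meet_X$ and $\join_X$.
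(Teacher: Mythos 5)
Your proof is correct and is the standard order-theoretic/algebraic lattice correspondence; the paper itself omits a proof of this lemma, treating it as routine, and your argument is exactly the routine one intended. The two points you flag — that each condition forces antisymmetry, and that the structures in (1) and (2) are related by interchanging $\meet_X$ and $\join_X$ — are indeed the only places requiring any care.
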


A function $\psi:X\ra Y$ from a lattice $X$ to a lattice $Y$ is a \textit{homomorphism} if $\psi$ preserves the multiplications.
It follows from Lemma \ref{lem:lattices} that every homomorphism of lattices defines a monotone function, full if injective, of preordered sets.

A \textit{topological lattice} is a lattice $X$ topologized so that its lattice operations $\meet_X,\join_X:X\times X\ra X$ are continuous.  
A (\textit{real}) \textit{lattice-ordered topological vector space} is a (real) topological vector space $V$ equipped with a preorder $\leqslant_V$ turning $V$ into a topological lattice such that $\alpha a+\beta b\leqslant_{V}\alpha a'+\beta b'$ for all $a\leqslant_Va'$, $b\leqslant_Vb$, and $\alpha,\beta\in[0,\infty)$. 
An example is the standard topological vector space $\R^n$, having as its meet and join operations coordinate-wise min and max functions.

\section{Coends}\label{sec:coends}
For each small category $\DIAGRAM$, complete category $\GENERIC$, and functor $F:\OP\DIAGRAM\times\DIAGRAM\ra\GENERIC$, the \textit{coend} $\int_{\DIAGRAM}^{g}F(g,g)$ of $F$ is defined by the coequalizer diagram
\begin{equation*}
  \xymatrix@C=4pc{
    \coprod_{\gamma:g'\ra g''}F(g'',g')\ar@<.7ex>[r]\ar@<-.7ex>[r]
  & \coprod_{g}F(g,g)\ar[r]
  & \int_{\DIAGRAM}^gF(g,g) 
}
\end{equation*}
where the first coproduct is taken over all $\GENERIC$-morphisms $\gamma:g'\ra g''$, the second coproduct is taken over all $\GENERIC$-objects $g$, the top left arrow is induced from $\GENERIC$-morphisms of the form $F(g'',\gamma)$, and the bottom left arrow is induced from $\GENERIC$-morphisms of the form $F(\gamma,g')$.  
An example is the geometric realization
$$|B|=\int_{\DEL}^{[n]}B[n]\cdot\cosimplicial[n]$$
of a simplicial set $B$, where $\cosimplicial$ is the functor $\DEL\ra\SPACES$ naturally assigning to each non-empty finite ordinal $[n]$ the topological $n$-simplex $\cosimplicial[n]$.

\end{document}